\begin{document}
\newcommand{\cn}[1]{\overline{#1}}
\newcommand{\e}[0]{\epsilon}
\newcommand{\EE}{\ensuremath{\mathbb{E}}}
\newcommand{\qq}[1]{(q;q)_{#1}}
\newcommand{\A}{\ensuremath{\mathcal{A}}}
\newcommand{\GT}{\ensuremath{\mathbb{GT}}}
\newcommand{\PP}{\ensuremath{\mathbb{P}}}
\newcommand{\frakP}{\ensuremath{\mathfrak{P}}}
\newcommand{\frakQ}{\ensuremath{\mathfrak{Q}}}
\newcommand{\frakq}{\ensuremath{\mathfrak{q}}}
\newcommand{\R}{\ensuremath{\mathbb{R}}}
\newcommand{\Rplus}{\ensuremath{\mathbb{R}_{+}}}
\newcommand{\C}{\ensuremath{\mathbb{C}}}
\newcommand{\Z}{\ensuremath{\mathbb{Z}}}
\newcommand{\Weyl}[1]{\ensuremath{\mathbb{W}}^{#1}}
\newcommand{\Zgzero}{\ensuremath{\mathbb{Z}_{>0}}}
\newcommand{\Zgeqzero}{\ensuremath{\mathbb{Z}_{\geq 0}}}
\newcommand{\Zleqzero}{\ensuremath{\mathbb{Z}_{\leq 0}}}
\newcommand{\Q}{\ensuremath{\mathbb{Q}}}
\newcommand{\T}{\ensuremath{\mathbb{T}}}
\newcommand{\Y}{\ensuremath{\mathbb{Y}}}
\newcommand{\M}{\ensuremath{\mathbf{M}}}
\newcommand{\MM}{\ensuremath{\mathbf{MM}}}
\newcommand{\W}[1]{\ensuremath{\mathbf{W}}_{(#1)}}

\newcommand{\Real}{\ensuremath{\mathrm{Re}}}
\newcommand{\Imag}{\ensuremath{\mathrm{Im}}}
\newcommand{\re}{\ensuremath{\mathrm{Re}}}

\def \Ai {{\rm Ai}}
\def \sgn {{\rm sgn}}

\newcommand{\var}{{\rm var}}

\newtheorem{theorem}{Theorem}[section]
\newtheorem{partialtheorem}{Partial Theorem}[section]
\newtheorem{conj}[theorem]{Conjecture}
\newtheorem{lemma}[theorem]{Lemma}
\newtheorem{proposition}[theorem]{Proposition}
\newtheorem{corollary}[theorem]{Corollary}
\newtheorem{claim}[theorem]{Claim}
\newtheorem{KPZclass}[theorem]{KPZ class Conjecture}

\def\todo#1{\marginpar{\raggedright\footnotesize #1}}
\def\change#1{{\color{green}\todo{change}#1}}
\def\note#1{\textup{\textsf{\color{blue}(#1)}}}

\theoremstyle{definition}
\newtheorem{remark}[theorem]{Remark}
\theoremstyle{definition}
\newtheorem{example}[theorem]{Example}

\theoremstyle{definition}
\newtheorem{definition}[theorem]{Definition}

\theoremstyle{definition}
\newtheorem{definitions}[theorem]{Definitions}

\begin{abstract}
We introduce new integrable exclusion and zero-range processes on the one-dimensional lattice that generalize the $q$-Hahn TASEP and the $q$-Hahn Boson (zero-range) process introduced in \cite{povolotsky2013integrability} and further studied in \cite{corwin2014q}, by allowing jumps in both directions. Owing to a Markov duality, we prove moment formulas for the locations of particles in the exclusion process. This leads to a  Fredholm determinant formula that characterizes the distribution of the location of any particle. We show that the model-dependent constants that arise in the limit theorems predicted by the KPZ scaling theory are recovered by a steepest descent analysis of the Fredholm determinant.
For some choice of the parameters, our model specializes to the multi-particle-asymmetric diffusion model introduced in \cite{sasamoto1998one}. In that case, we make a precise asymptotic analysis that confirms KPZ universality predictions. Surprisingly, we also prove that in the partially asymmetric case, the location of the first particle also enjoys cube-root fluctuations which follow Tracy-Widom GUE statistics.
\end{abstract}

\title{The $q$-Hahn asymmetric exclusion process}

\author[G. Barraquand]{Guillaume Barraquand}
\address{G. Barraquand,
Columbia University,
Department of Mathematics,
2990 Broadway,
New York, NY 10027, USA}
\email{barraquand@math.columbia.edu}

\author[I. Corwin]{Ivan Corwin}
\address{I. Corwin, Columbia University,
Department of Mathematics,
2990 Broadway,
New York, NY 10027, USA,
and Clay Mathematics Institute, 10 Memorial Blvd. Suite 902, Providence, RI 02903, USA,
and Institut Henri Poincar\'e,
11 Rue Pierre et Marie Curie, 75005 Paris, France}
\email{ivan.corwin@gmail.com}

\maketitle
\textbf{Keywords: } Interacting particle systems, KPZ universality class, Exclusion processes, Bethe ansatz, Tracy-Widom distribution.
\setcounter{tocdepth}{1}
\tableofcontents
\hypersetup{linktocpage}

\section{Introduction}

The purpose of this paper is to introduce a new family of Bethe ansatz solvable exclusion and zero-range processes on the one-dimensional lattice $\mathbb{Z}$. Our construction generalizes the $q$-Hahn Boson (zero-range) process introduced in \cite{povolotsky2013integrability} and the $q$-Hahn TASEP further studied in \cite{corwin2014q}, by allowing jumps in both directions. Under mild assumptions on the microscopic dynamics, a wide class of interacting  particle systems are expected to lie in the KPZ universality class (see e.g. \cite{corwin2012kardar}). In particular, when started from step initial data, the positions of particles in the bulk of the rarefaction fan are expected to have cube-root scale fluctuations distributed according to Tracy-Widom type statistics, up to scaling constants depending on microscopic dynamics. Presently, these universality predictions can be confirmed only for a small number of exactly solvable models. Discovering a greater variety of analysable models, with more and more degrees of freedom, has a threefold interest:
\begin{enumerate}
\item To better understand the range of applicability of exact solvability,
\item To check the conjectural KPZ scaling theory on various integrable models, and expand the scope of the universality class,
\item To shed light on new phenomena beyond universality.
\end{enumerate}

 In this paper, we discover a new type of phenomena in presence of a jump discontinuity (anti-shock) of the system's hydrodynamic profile. For one particular exactly-solvable model that we call the \emph{MADM exclusion process}, we prove that fluctuations of the jump discontinuity (as measured by the location of the first particle in the system) are of order $t^{1/3}$ with limiting GUE Tracy-Widom statistics as $t$ goes to infinity. In other words, the first particle behaves exactly like particles deep in the rarefaction fan. We believe it is an interesting question to investigate how universal this scaling and limiting statistic is among systems which develop such jump discontinuity.

\subsection{MADM exclusion process}

 \label{subsec:motivations} The MADM exclusion process is a continuous-time Markov process on configurations of particles
$$ +\infty=x_0(t) >x_1(t) >x_2(t) > \dots >x_n(t) > \dots \ ;\ x_i\in \Z.$$
Fix $q\in (0,1)$ and $R>L>0$ such that $R+L=1$. The n$^{th}$ particle,  located at $x_n(t)$, jumps right to the location $x_n(t)+j$ at rate (i.e. according to independent exponentially distributed waiting times with rate) $R/[j]_{q^{-1}}$ for all $j\in \lbrace 1, \dots, x_{n-1}(t) - x_n(t) - 1\rbrace$,  and jumps left to the location $x_n(t)-j'$ at rate $L/[j']_{q}$ for all $j'\in \lbrace 1, \dots, x_{n}(t) - x_{n+1}(t) - 1\rbrace$. Here the $q$-deformed integers $[j]_{q^{-1}}$ and  $[j]_{q}$ are defined as
$$ [j]_{q^{-1}}  = 1+q^{-1}+ \dots + q^{1-j}, \ \ \ [j]_{q}  = 1+ q +\dots + q^{j-1}.$$
An example of some possible jumps is shown in Figure \ref{fig:defMADM}.
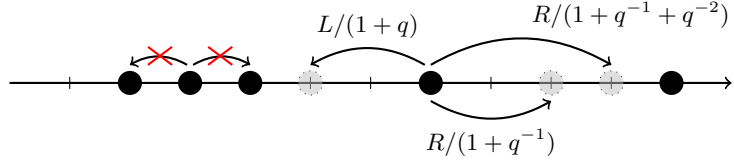
\begin{figure}[t]
\begin{center}
\begin{tikzpicture}[scale=0.8]
\draw[thick, ->] (-5,0) -- (7,0);
\foreach \k in {-4, ..., 6}
{\draw (\k, -0.1) -- (\k,0.1);}
\fill (-3,0) circle(0.2);
\fill (-2,0) circle(0.2);
\fill (-1,0) circle(0.2);
\fill (2,0) circle(0.2);
\draw[dotted] (5,0) circle(0.2);
\fill[black!25, opacity=0.5] (5,0) circle(0.2);
\draw[dotted] (4,0) circle(0.2);
\fill[black!25, opacity=0.5] (4,0) circle(0.2);
\draw[dotted] (0,0) circle(0.2);
\fill[black!25, opacity=0.5] (0,0) circle(0.2);
\fill (6,0) circle(0.2);
\draw[thick, ->]  (-1.95,0.3) to[bend left]  (-1,0.3) ;
\draw[thick, red] (-1.7,0.3)  -- (-1.3,0.6);
\draw[thick, red] (-1.7,0.6)  -- (-1.3,0.3);
\draw[thick, ->] node{} (-2.05,0.3) to[bend right] node{} (-3,0.3);
\draw[thick, red] (-2.7,0.3)  -- (-2.3,0.6);
\draw[thick, red] (-2.7,0.6)  -- (-2.3,0.3);
\draw[thick, ->] node{} (2,0.3) to[bend left] node[above right]{\footnotesize{$R/(1+q^{-1}+q^{-2})$}} (5,0.3);
\draw[thick, ->] node{} (2,-0.3) to[bend right] node[below]{\footnotesize{$R/(1+q^{-1})$}} (4,-0.3);
\draw[thick, ->]  (1.9,0.3) to[bend right] node[above]{\footnotesize{$L/(1+q)$}} (0,0.3) ;
\end{tikzpicture}
\end{center}
\caption{Rates of a few admissible jumps in the exclusion process corresponding to the multi-particle asymmetric diffusion model (MADM exclusion process). }
\label{fig:defMADM}
\end{figure}
The gaps of the system evolve according to the multi-particle asymmetric diffusion model (MADM), introduced by Sasamoto and Wadati \cite{sasamoto1998one} and studied therein in the context of Bethe ansatz diagonalizability.

Let us briefly review the hydrodynamic theory for the MADM exclusion process (see Section \ref{sec:KPZscaling} for more details).  The Bernoulli product measure with probability $\rho$ of having a particle at a site is stationary for the MADM exclusion process. Furthermore, one computes that the average steady-state current (or flux) $j(\rho)$ as a function of density $\rho$ is given by
$$j(\rho)  =  \rho \frac{1-q}{\log(q)^2} \left(\frac R q\ \Psi_q'\big(1+\log_q(1-\rho)\big) - L \ \Psi_q'\big(\log_q(1-\rho)\big) \right),
$$
where $\Psi_q'$ is the derivative of the $q$-digamma function (see Section \ref{sec:preliminaries}).
The function $\rho \mapsto j(\rho)$ is plotted in Figure \ref{fig:plotj}. For small densities, particles have a net drift to the left, whereas for larger densities particles have a net drift to the right.
\begin{figure}
\includegraphics[scale=0.4]{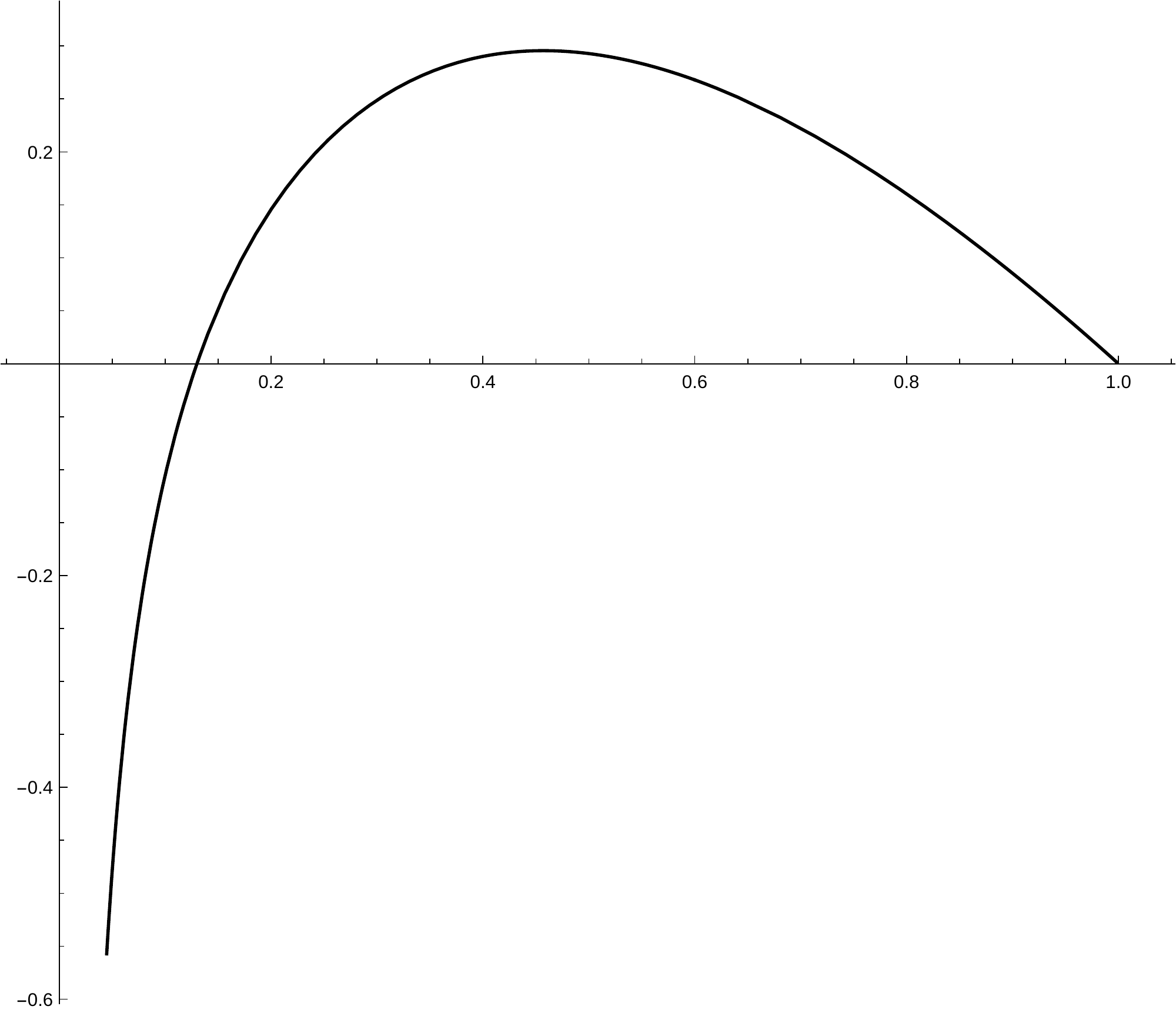}
\caption{Plot of the function $\rho \mapsto j(\rho)$ for $q=0.4$ and asymmetry parameters
$R=0.95=1-L$.}
\label{fig:plotj}
\end{figure}

When the system is started from the step initial condition, that is $x_n(0)=-n$, the locations of particles satisfy a law of large numbers.
Let $\theta >0$ parametrize the position we consider in the rarefaction fan (see Section \ref{sec:KPZscaling}), then we have that
\begin{equation}
\frac{x_{\lfloor \kappa(\theta)\rfloor}(t)}{t} \xrightarrow[t\to \infty]{} \pi(\theta)
\label{eq:llnintro}
\end{equation}
where $\kappa(\theta)$ and $\pi(\theta)$ are functions of $\theta$ defined by
\begin{equation}
\pi(\theta) = \frac{1-q}{\log(q)^2}
\left[\frac R q \left(\Psi_q'(\theta+1) - \frac{1-q^{\theta}}{q^{\theta}\log(q)}\Psi_q''(\theta+1) \right) - L \left(\Psi_q'(\theta) - \Psi_q''(\theta)\frac{1-q^{\theta}}{q^{\theta}\log(q)} \right) \right],
\label{eq:expressionforpiintro}
\end{equation}
and
\begin{equation}
\kappa(\theta) = \frac{1-q}{\log(q)^3} \frac{
(1-q^{\theta})^2}{q^{\theta}} \left(\frac{R}{q}\  \Psi_q''(\theta+1) - L\ \Psi_q''(\theta)\right).
\label{eq:expressoinforkappaintro}
\end{equation}
This hydrodynamic behaviour can also be phrased in terms of the limiting density profile.
Denoting by  $\rho(x)$ the local density of particles at time $t$ around site $xt$ for very large $t$, the law of large numbers \eqref{eq:llnintro} translates into the density profile shown in Figure \ref{fig:densityprofileintro}.
\begin{figure}
\begin{center}
\includegraphics[scale=0.5]{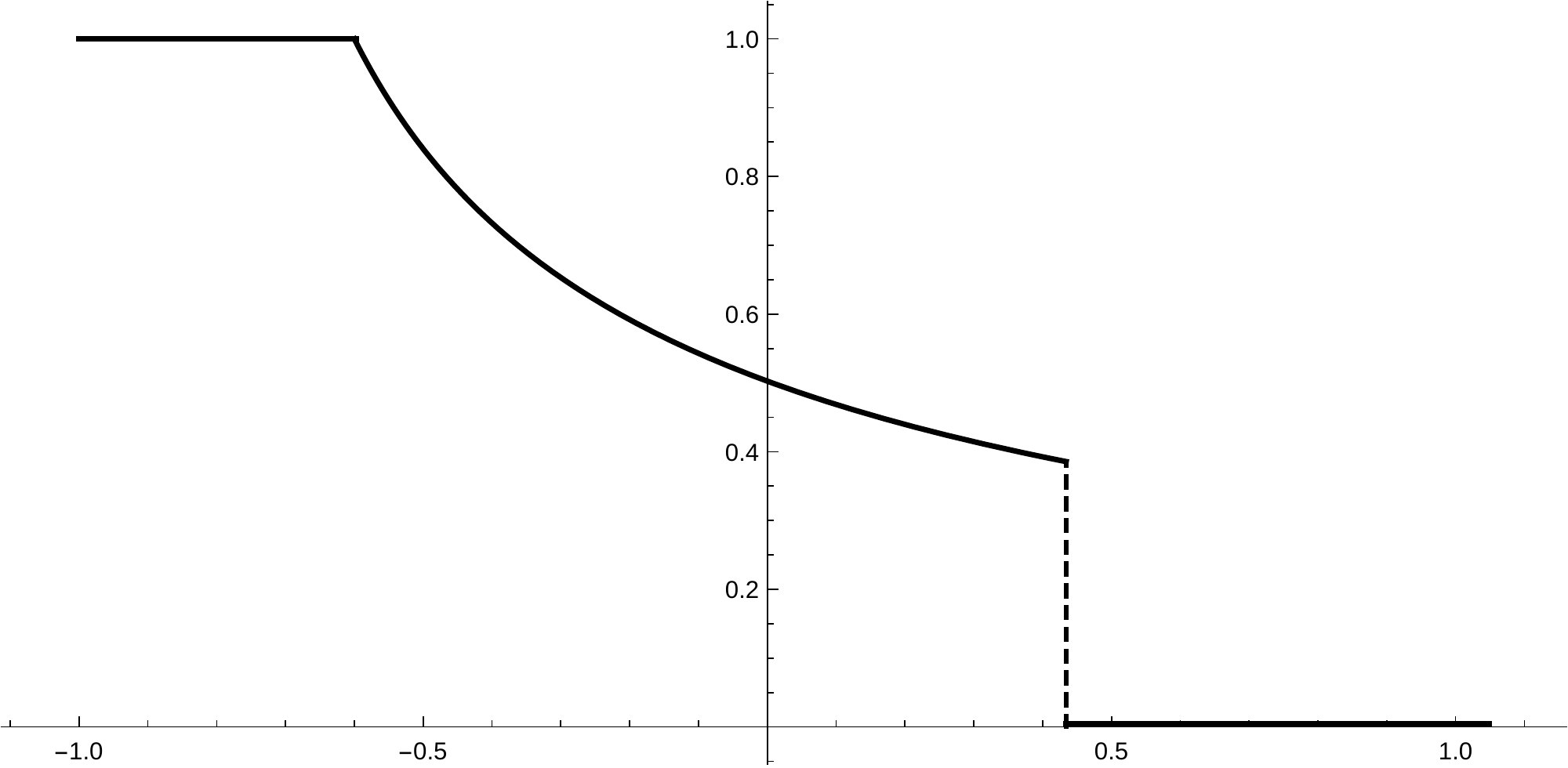}
\end{center}
\caption{Density profile $x\mapsto \rho(x)$ for a $q$-Hahn AEP with $q=\nu=0.6$, and asymmetry parameters $R=0.8$ and $L=0.2$, starting from step initial data.
It is such that $\rho(\pi(\theta)) = 1-q^{\theta}$.
}
\label{fig:densityprofileintro}
\end{figure}
The density profile in the partially asymmetric case (that is when $R>L>0$) is discontinuous on its right edge.  A simple argument explains why this discontinuity is present. Consider the behaviour of the first particle $x_1(t)$. The rate at which it jumps anywhere to the right is
$$ \sum_{j=1}^{\infty} \frac{R}{[j]_{q^{-1}}} <\infty .$$
whereas the rate at which it jumps anywhere to the left is
$$ \sum_{j=1}^m \frac{L}{[j]_q} \xrightarrow[m\to+\infty]{} +\infty, $$
where $m=x_1(t)-x_2(t)-1$.
Thus, even though particles want to generally move right (because $R>L$), the first particle stays with high probability at a bounded distance from the second particle, and hence the density around the first particles remains  strictly positive. In terms of the flux, this explains why $j(\rho)$ is negative for small $\rho$.

The property of the flux function $j(\rho)$ which is responsible for the occurrence and location of the  discontinuity in the density profile is the fact that the drift, that is $j(\rho)/\rho$, is not monotone as a function of $\rho$. Since we are starting with step initial data, the hydrodynamic limit $\rho(x)$ will be decreasing in $x$. As long as the drift $j\big(\rho(x)\big)/\rho(x)$ increases with $x$, the profile will fan out, but once the drift start decreasing, a jam will occur and the discontinuity will form at that $x$. Our limit theorems stated below confirm the result of this reasoning.


We consider the fluctuation behaviour in the rarefaction fan as well as the right edge jump behaviour. For particles in the bulk of the rarefaction fan, we prove that the limit behaviour matches the predictions for models in the KPZ universality class. In Section \ref{subsec:hydro} and \ref{subsec:sigma}, we also explain how  the model-dependent constants in this limit theorem are consistent with the physics KPZ scaling theory \cite{krug1992amplitude, spohn2012kpz}.
\begin{theorem}
\label{thm:fluctuationsintro2}
Consider the MADM exclusion process started from step initial condition, with $q\in (0,1)$ and asymmetry parameters $R$ and $L=1-R$ such that $R>L\geqslant 0$.
Assume that $\theta \in(0, +\infty)$ is such that $q^{\theta}>2q/(1+q)$, then there exists a constant $\sigma(\theta)>0$ such that for $n=\lfloor \kappa(\theta ) t\rfloor$,
$$ \lim_{t\to \infty} \mathbb{P}\left(\frac{x_n(t) - \pi(\theta)t}{\sigma(\theta) t^{1/3}} \geqslant x\right) = F_{\mathrm{GUE}}(-x).$$
The expressions of the model-dependent constants  $\kappa\left(\theta\right), \pi(\theta)$ and $\sigma(\theta)$ as functions of $\theta$  are given in \eqref{eq:expressoinforkappaintro}, \eqref{eq:expressionforpiintro} and \eqref{eq:expressionforsigma} and  $F_{\mathrm{GUE}}$ is the GUE Tracy-Widom distribution (see definition \ref{def:TW}).
\end{theorem}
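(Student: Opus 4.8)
The plan is to perform a steepest descent analysis of the Fredholm determinant formula for the $q$-Laplace transform of $q^{x_n(t)}$ established earlier in the paper (itself a consequence of the Markov duality and the resulting moment formulas), after specializing---or taking the appropriate degeneration of---its parameters to those of the MADM exclusion process. The first step is to reduce the theorem to an asymptotic statement about this transform. Writing the formula schematically as
\[
\EE\left[\frac{1}{\big(\zeta\, q^{x_n(t)+n};q\big)_\infty}\right] = \det\!\big(I+K_\zeta\big)_{L^2(\mathcal{C})},
\]
one uses that $y\mapsto 1/(\zeta q^{y};q)_\infty$ is monotone and, for $\zeta=-q^{-M}$, approximates $\mathbf 1_{y\ge M}$ up to an $O(1)$ smoothing window. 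Since $x_n(t)$ will be shown to fluctuate on the much larger scale $t^{1/3}$, choosing $\zeta=\zeta_t:=-q^{-\lfloor \pi(\theta)t+\sigma(\theta)t^{1/3}x\rfloor-n}$ and invoking the standard sandwiching argument (as in the $q$-TASEP literature) squeezes $\PP\big(x_n(t)\ge \pi(\theta)t+\sigma(\theta)t^{1/3}x\big)$ between the transform evaluated at slightly shifted values of $\zeta_t$; because the conjectured limit $F_{\mathrm{GUE}}(-x)$ is continuous in $x$, it then suffices to prove $\det(I+K_{\zeta_t})\to F_{\mathrm{GUE}}(-x)$.

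Next I would set up the saddle point analysis. A change of variables brings $K_{\zeta_t}$ into a form whose kernel is built from $\exp\!\big(t\,(f_\theta(w)-f_\theta(z))\big)$ times a slowly varying prefactor, where $f_\theta$ is an explicit combination of logarithms, the $q$-exponential, and (after specialization) elementary functions, and where the dependence on $\zeta_t$ has been absorbed into the contours and prefactor. The constants $\kappa(\theta),\pi(\theta),\sigma(\theta)$ are precisely those for which $f_\theta$ develops a coalescing pair of critical points: $\pi(\theta)$ and $\kappa(\theta)$ are fixed by imposing $f_\theta'(w_c)=f_\theta''(w_c)=0$ at a point $w_c$ with $f_\theta'''(w_c)\ne 0$, and $\sigma(\theta)^3$ is proportional to $f_\theta'''(w_c)$ (its value being \eqref{eq:expressionforsigma}). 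Rescaling $w=w_c+\zeta t^{-1/3}$, $z=w_c+\eta t^{-1/3}$ then turns the exponent into the cubic $\tfrac{1}{6}f_\theta'''(w_c)(\zeta^3-\eta^3)$ and the prefactor and kernel into the standard Airy-kernel shape, so that the limiting Fredholm determinant is $\det(I-K_{\mathrm{Ai}})_{L^2(-x,\infty)}=F_{\mathrm{GUE}}(-x)$ once conventions are matched.

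The technical heart---and the step I expect to be the main obstacle---is the contour deformation together with the estimates that justify localizing to the $t^{-1/3}$-neighborhood of $w_c$. One must deform the $w$- and $z$-contours of $K_{\zeta_t}$ onto steepest descent/ascent paths for $\re f_\theta$ through $w_c$ while keeping the $w$-contour on the correct side of (resp.\ correctly encircling) the poles of the integrand coming from the $q$-Pochhammer factors and the $\zeta_t$-dependence. This is exactly where the hypothesis $q^{\theta}>2q/(1+q)$ enters: it guarantees that $w_c$ lies in the region where such a joint deformation is possible---equivalently, that the parametrized position sits strictly inside the rarefaction fan, away from the right-edge discontinuity---so the cubic regime is genuinely attained. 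With admissible contours fixed, one establishes (i) a quantitative bound $\re\big(f_\theta(w)-f_\theta(w_c)\big)\le -c|w-w_c|^3$ along the descent path near $w_c$, with uniform decay away from it; (ii) exponential negligibility of the part of the contours outside the $t^{-1/3}$-window; and (iii) uniform-in-$t$ integrability via Hadamard's inequality, so the Fredholm series converges term by term and the limit may be passed inside the determinant. Combining (i)--(iii) with the local cubic computation yields the convergence claimed above, and hence the theorem.
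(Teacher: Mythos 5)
Your proposal follows essentially the same route as the paper's own proof (Theorem \ref{thm:fluctuationsrarefactionfan}): specialize the Mellin--Barnes Fredholm determinant of Theorem \ref{th:fredholmgeneral} to $\nu=q$, reduce via a monotonicity/sandwich lemma (the paper uses Lemma \ref{lem:Macdonald4.1.39}, i.e.\ \cite[Lemma 4.1.39]{borodin2014macdonald}) to showing that the $e_q$-Laplace transform converges to $F_{\mathrm{GUE}}(-x)$, change variables $w=q^W$, $s+W=Z$ so the kernel is governed by $\exp\big(t(f_0(Z)-f_0(W))\big)$, fix $\pi(\theta),\kappa(\theta)$ by $f_0'(\theta)=f_0''(\theta)=0$ with $\sigma(\theta)^3 = -f_0'''(\theta)/\big(2(\log q)^3\big)$, establish steep descent along the contours $\mathcal{C}_\alpha,\mathcal{D}_\alpha$ (Propositions \ref{prop:steepdescentC}--\ref{prop:steepdescentD}), localize to a $t^{-1/3}$-window, and rescale to the Airy kernel with Hadamard's bound justifying exchange of limits. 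One small correction to your account of the hypothesis: $q^\theta>2q/(1+q)$ is \emph{not} equivalent to the position lying strictly inside the rarefaction fan away from the jump discontinuity --- being in the fan is the separate requirement $\theta>0$ with $\kappa(\theta)\ge0$. Rather, it is the purely technical condition (\ref{eq:restrictivecondition}) allowing the $Z$-contour to be taken as the vertical line through $\theta$ without crossing the poles of the $\sin$-factor at $W+1,W+2,\dots$ (see Remark \ref{rem:restrictivecondition}); indeed Theorem \ref{thm:fluctuationsparticle1} proves the same cube-root/GUE result \emph{at} the discontinuity $\theta=\theta_0$, precisely under the assumption $R>R_{\min}(q)$ which guarantees $q^{\theta_0}>2q/(1+q)$ there.
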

Theorem \ref{thm:fluctuationsintro2} is proved  as Theorem \ref{thm:fluctuationsrarefactionfan} in Section \ref{sec:MADM}, and it implies the convergence
\eqref{eq:llnintro} in probability. The condition on $\theta$ should just be technical (though as it is, it restricts us to a right section of the rarefaction fan).
\begin{remark}
Lee recently posted a preprint on arXiv \cite{lee2014fredholm} where a similar asymptotic result is proposed for an infinite volume MADM which is different from the one discussed in the present paper. Although Theorem \ref{thm:fluctuationsintro2} is not in contradiction with \cite{lee2014fredholm}, the present authors pointed out fundamental issues in the proof. In particular, the weak law of large numbers implied by the limit theorem \cite[Theorem 1.3]{lee2014fredholm} does not agree with the particle dynamics considered. At the time of posting of the present article, no revision remedying these issues have been made.
\end{remark}

Turning to the right edge behaviour, let us first recall some of what is known for systems without jump discontinuities.
For TASEP (which is a special case of the MADM exclusion process when $R=1$, $L=0$, $q=0$) from step initial condition,
an application of the law of large numbers and the classical central limit theorem shows that as $t\to\infty$,
$$ \frac{x_1(t) -  t }{ \sqrt{t}} \longrightarrow \mathcal{N}(0,1).$$
For ASEP,  Theorem 2 in \cite{tracy2009asymptotics} shows that the position of the first particle still fluctuates on a $\sqrt{t}$ scale, but the limiting law is not Gaussian. Both TASEP and ASEP have no jump in their density profile $\rho(x)$ when started from step initial data. The $t^{1/2}$ scaling seems robust but the limit law not.


Turning back to the MADM exclusion process, we see that the occurrence of a jump discontinuity seems to radically change the first particles fluctuations.
\begin{theorem}
Consider the MADM exclusion process started from step initial condition with $q\in (0,1)$ and asymmetry parameters $R$ and $L=1-R$ such that $R_{min}(q)<R<1$, where $R_{min}(q)$ is an explicit bound depending on the parameter $q$ (see Theorem \ref{thm:fluctuationsparticle1} and Remark \ref{rem:restrictivecondition} for a precise expression).
Then,
$$ \lim_{t\to \infty} \mathbb{P}\left(\frac{x_1(t) - \pi t}{\sigma t^{1/3}} \geqslant x\right) = F_{\mathrm{GUE}}(-x),$$
where $\pi$ and $\sigma>0$ are explicit constants depending on $R$ and $q$, and $F_{\mathrm{GUE}}(x)$ is the GUE Tracy-Widom distribution (see Definition \ref{def:TW}).
\label{thm:fluctuationsintro1}
\end{theorem}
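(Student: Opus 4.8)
The plan is to take the Fredholm determinant formula for $\mathbb{P}\left(x_n(t)\geqslant y\right)$ proved earlier in the paper (obtained from the Markov duality, the resulting moment formulas for $x_n(t)$, and a nested-contour/Mellin--Barnes ansatz), specialize it to the MADM exclusion process and to $n=1$, and run a steepest-descent asymptotic analysis with $y=\pi t+\sigma t^{1/3}x$. Concretely, $\mathbb{P}\left(x_1(t)\geqslant y\right)=\det\left(I+K_{\zeta}\right)$ for a kernel $K_{\zeta}$ whose main ingredient is an exponential factor $e^{t\,G(w)}$, times ratios of $q$-Pochhammer / $q$-Gamma functions and a power of $\zeta$ calibrated to encode the location $y$. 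The conceptual reason the answer is $t^{1/3}$ GUE rather than Gaussian as for the first particle of TASEP/ASEP is that for the MADM the right edge of the rarefaction fan sits at a \emph{strictly positive} density: the function $\kappa(\theta)$ of \eqref{eq:expressoinforkappaintro} vanishes at a finite $\theta_{0}\in(0,\infty)$, namely the root of $\tfrac{R}{q}\Psi_q''(\theta_{0}+1)=L\,\Psi_q''(\theta_{0})$, so that $n=1$ corresponds to the regime $\theta\to\theta_{0}^{+}$ and not to a vanishing-density edge. Since $n=1$ is fixed, $n/t\to0$ and the term carrying $n$ drops out of the limiting exponent, so $G$ depends only on $R,q$ and on the macroscopic speed $\pi=\lim_{t\to\infty}y/t$.

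The second step is to locate the coalescing (double) critical point: impose $G'(w_{c})=G''(w_{c})=0$. These two equations fix $w_{c}$ and $\pi$ simultaneously, and one checks that $\pi=\pi(\theta_{0})$ with $\pi(\cdot)$ as in \eqref{eq:expressionforpiintro}; this also yields the law of large numbers for $x_1(t)$ contained in the theorem. One then defines $\sigma>0$ through the Airy normalization $G\left(w_{c}+t^{-1/3}\tilde w\right)=G(w_{c})+\tfrac13\sigma^{3}\tilde w^{3}t^{-1}+o(t^{-1})$, i.e. $\sigma$ is an explicit positive multiple of $G'''(w_{c})^{1/3}$, expressible through the $q$-digamma function. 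The restriction $R_{\min}(q)<R<1$ is precisely the range in which this programme is consistent: existence of a finite $\theta_{0}$ and of a suitable $w_{c}$ already requires $R$ not too small, and the sharp bound $R_{\min}(q)$ is the threshold below which $w_{c}$ cannot be placed on a contour separating the relevant poles of the kernel, or below which $G'''(w_{c})$ fails to have the sign making $\sigma$ real and positive. Pinning down the explicit form of $R_{\min}(q)$ (as in Theorem \ref{thm:fluctuationsparticle1} and Remark \ref{rem:restrictivecondition}) is a direct computation with $\Psi_q',\Psi_q'',\Psi_q'''$.

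The third step is the now-standard steepest-descent machinery for such Fredholm determinants. Deform the $w$- and $w'$-contours so that they cross $w_{c}$ along the local steepest-descent directions (angles $\pm2\pi/3$) and so that globally $\mathrm{Re}\,G$ decreases away from $w_{c}$ on the deformed $w$-contour and increases away from $w_{c}$ on the deformed $w'$-contour; this rests on a global description of the level set $\{\mathrm{Re}\,G=\mathrm{Re}\,G(w_{c})\}$ (a ``star-shape''/monotonicity lemma). Then (a) the part of the determinant coming from outside a $t^{-1/3}$-neighbourhood of $w_{c}$ is exponentially small; (b) inside the neighbourhood, the change of variables $w=w_{c}+t^{-1/3}\tilde w$, $w'=w_{c}+t^{-1/3}\tilde w'$ turns $K_{\zeta}$, after the appropriate conjugation and rescaling, into the Airy kernel $K_{\mathrm{Ai}}$ on $L^{2}(x,\infty)$; (c) uniform exponential tail bounds on the rescaled kernel license passing to the limit term-by-term in the Fredholm expansion. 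Hence $\det\left(I+K_{\zeta}\right)\to\det\left(I-K_{\mathrm{Ai}}\right)_{L^{2}(x,\infty)}=F_{\mathrm{GUE}}(-x)$, which is the claim.

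The main obstacle is the global contour/level-set analysis in this ``edge'' regime. Unlike in the bulk of the fan, the coalescing critical point for the first particle lies near the pole and branch structure of the kernel, so the existence of admissible steepest-descent contours with the required monotonicity of $\mathrm{Re}\,G$ is genuinely delicate and holds only for $R>R_{\min}(q)$; establishing the correct topology of $\{\mathrm{Re}\,G=\mathrm{Re}\,G(w_{c})\}$ in that range, and checking that neither the $w$- nor the $w'$-contour crosses a pole during the deformation, is the heart of the argument. A secondary technical point is the $n=1$ specialization of the $q$-Laplace-transform inversion and control of the $\zeta$-dependence, so that the interchange of the $t\to\infty$ limit with the Mellin--Barnes integral and with the Fredholm series is fully justified via uniform bounds for dominated convergence.
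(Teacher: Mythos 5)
Your overall route — specialize the Fredholm determinant formula (Theorem \ref{th:fredholmgeneral}) to $\nu=q$ and $n=1$, locate a double critical point, do a steepest-descent analysis yielding the Airy kernel — is the paper's route, but you have two inaccuracies worth pointing out, and you miss the one observation that makes the paper's proof short.

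First, the structural reduction. In the paper, the proof of Theorem \ref{thm:fluctuationsparticle1} does not re-run the steepest-descent machinery; it reduces to the already-proved Theorem \ref{thm:fluctuationsrarefactionfan}. The key observation is this: for $\theta=\theta_0$ with $\kappa(\theta_0)=0$, the function $f_0$ of \eqref{eq:deff0} has no $\kappa\log(1-q^Z)$ term, so $f_0$ corresponds to the case $n=0$; but $x_1(t)$ requires $n=1$. The net effect is that the kernel $K_x$ for $n=1$ is exactly the kernel from \eqref{eq:kernelexponentialform} for $n=\lfloor\kappa t\rfloor=0$ multiplied by the extra factor
\[
\left(\frac{(\nu q^W;q)_\infty}{(q^W;q)_\infty}\right)\Big/\left(\frac{(\nu q^Z;q)_\infty}{(q^Z;q)_\infty}\right),
\]
which at $\nu=q$ is $\frac{1-q^Z}{1-q^W}$. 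Since this factor is independent of $t$, bounded on the contours, and tends to $1$ under the rescaling $Z=\theta_0+\tilde{z}t^{-1/3}$, $W=\theta_0+\tilde{w}t^{-1/3}$, it does not perturb either the steep-descent bounds (Propositions \ref{prop:steepdescentC}, \ref{prop:steepdescentD}) or the local Airy limit. So the entire program of your third step — deforming contours, controlling $\mathrm{Re}\,f_0$, proving decay away from the critical point, rescaling to Airy — is imported wholesale rather than reproven. You should make this reduction explicit; otherwise you would be duplicating the long proof of Theorem \ref{thm:fluctuationsrarefactionfan}.

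Second, the roles of the two conditions on $R$ are not quite as you describe. The condition $R<1$ (equivalently $L>0$) is what guarantees the existence of a finite $\theta_0$ with $\kappa_{q,q,R}(\theta_0)=0$; this is not a ``$R$ not too small'' statement. The condition $R>R_{\min}(q)$ is equivalent, for the resulting $\theta_0$, to $q^{\theta_0}>\tfrac{2q}{1+q}$ (see Remark \ref{rem:restrictivecondition}), which is precisely the contour-deformation condition \eqref{eq:restrictivecondition} used in the proof of Theorem \ref{thm:fluctuationsrarefactionfan}. It is purely a contour/pole condition, not a sign condition: the paper proves unconditionally (Lemma \ref{lem:thirdderivativegeneral}) that $f_0'''(\theta)>0$ for all $\theta>0$, any $q\in(0,1)$, $\nu\in[0,1)$, $R+L=1$, so $\sigma>0$ always and your alternative explanation (``$G'''(w_c)$ fails to have the sign'') does not arise. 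You should drop that possibility.

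With those two corrections — using the reduction to Theorem \ref{thm:fluctuationsrarefactionfan} via the benign multiplicative factor, and correctly attributing $R<1$ to existence of $\theta_0$ and $R>R_{\min}(q)$ to the contour-deformation constraint — your proposal matches the paper's proof.
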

Theorem \ref{thm:fluctuationsintro1} is proved in Section \ref{sec:MADM} as Theorem \ref{thm:fluctuationsparticle1}. This shows that the first particles fluctuates in the same manner as those in the rarefaction fan.

It is tempting to ask whether this behaviour ($t^{1/3}$ scaling and $F_{\mathrm{GUE}}$ limit law) is universal in presence of a discontinuous density profile. We leave that question for further study \cite{barraquand2015preparation}.

\subsection{Duality and Bethe anzatz solvability}

The results announced  in Section \ref{subsec:motivations} are actually special cases of results we prove for a model that we introduce here and call the \emph{$q$-Hahn asymmetric exclusion process} (abbreviated  $q$-Hahn AEP). This process depends on two parameters $q\in (0,1), \nu \in [0,1)$ and asymmetry parameters $R, L \geqslant 0$. The $q$-Hahn AEP degenerates to many known exactly-solvable processes. For instance one recovers the MADM exclusion process when $\nu=q$. Figure \ref{fig:chart} summarizes these degenerations (see Section \ref{subsec:degenerations}).

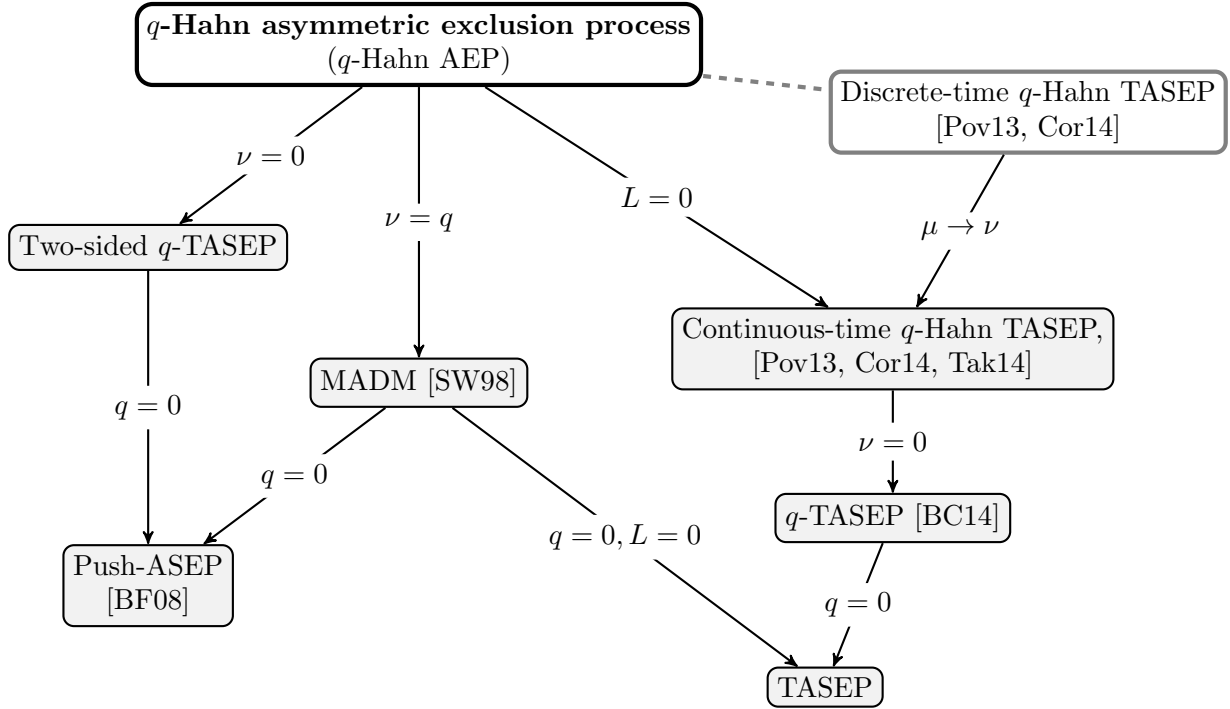
\begin{figure}
\begin{tikzpicture}[scale=0.9, every text node part/.style={align=center}]
\usetikzlibrary{arrows}
\usetikzlibrary{shapes}
\usetikzlibrary{shapes.multipart}
\tikzstyle{process}=[rectangle ,draw,thick ,rounded corners=4pt,fill=black!5 ]
\tikzstyle{AEP}=[rectangle,draw,ultra thick, rounded corners=6pt ]
\tikzstyle{qHahn}=[rectangle,draw, ultra thick, rounded corners=4pt, gray, text=black]
\tikzstyle{limit}=[->,>=stealth',thick,rounded corners=4pt]
\tikzstyle{relation}=[ ultra thick, dashed, rounded corners=4pt, gray]

\node[AEP] (qHahnAEP) at (-1,1) {\textbf{$q$-Hahn asymmetric exclusion process}\\ ($q$-Hahn AEP)};
\node[process]  (cqHahn) at (6,-3.5) {Continuous-time $q$-Hahn TASEP, \\
\cite{povolotsky2013integrability, corwin2014q, takeyama2014deformation}};
\node[process]  (twosidedqTASEP) at (-5,-2) {Two-sided  $q$-TASEP};
\node[process]  (qTASEP) at (6,-6) {$q$-TASEP \cite{borodin2014macdonald}};
\node[process]  (TASEP) at (5,-8.5) {TASEP};
\node[process]  (MADM) at (-1,-4) {MADM  \cite{sasamoto1998one}};
\node[qHahn]  (qHahn) at (8,0) {Discrete-time $q$-Hahn TASEP\\ \cite{povolotsky2013integrability, corwin2014q}};
\node[process]  (PushASEP) at (-5,-7) {Push-ASEP \\ \cite{borodin2008large}} ;

\draw[limit] (qHahnAEP) -- (cqHahn) node[midway,fill=white]{$L=0$};
\draw[limit] (qHahnAEP) -- (twosidedqTASEP) node[midway,fill=white]{$\nu=0$};
\draw[limit] (qHahnAEP) -- (MADM) node[midway,fill=white]{$\nu=q$};
\draw[limit] (twosidedqTASEP) -- (PushASEP) node[midway,fill=white]{$q=0$};
\draw[relation] (qHahnAEP) -- (qHahn);
\draw[limit] (qHahn) -- (cqHahn) node[midway,fill=white]{$\mu\to\nu $};
\draw[limit] (cqHahn) -- (qTASEP) node[midway,fill=white]{$\nu=0$};
\draw[limit] (qTASEP) -- (TASEP) node[midway,fill=white]{$q=0$};
\draw[limit] (MADM) -- (PushASEP) node[midway,fill=white]{$q=0$};
\draw[limit] (MADM) -- (TASEP) node[midway,fill=white]{$q=0, L=0$};
\end{tikzpicture}
\caption{The various degenerations and limits of the $q$-Hahn AEP. All systems except the discrete-time $q$-Hahn TASEP are in continuous time.}
\label{fig:chart}
\end{figure}
%

For $q\in (0,1)$,  $\nu\in [0,1)$ and asymmetry parameters $R, L\geqslant 0$, assuming without loss of generality that $R+L=1$,
we define the $q$-Hahn AEP as a continuous-time Markov process on configurations of particles
$$ +\infty=x_0(t) >x_1(t) >x_2(t) > \dots >x_n(t) > \dots \ ;\ x_i\in \Z.$$
The n$^{th}$ particle,  located at $x_n(t)$, jumps right to the location $x_n(t)+j$ at rate (i.e. according to independent exponentially distributed waiting times with rate) $\phi^R_{q, \nu}(j\vert x_{n-1}(t) - x_n(t) - 1)$ for all $j\in \lbrace 1, \dots, x_{n-1}(t) - x_n(t) - 1\rbrace$, and jumps left to the location $x_n(t)-j'$ at rate $\phi^L_{q, \nu}(j'\vert x_{n}(t) - x_{n+1}(t) - 1)$ for all $j'\in \lbrace 1, \dots, x_{n}(t) - x_{n+1}(t) - 1\rbrace$. Figure \ref{fig:intro} shows two possible jumps for $x_n(t)$.
\begin{figure}[t]
\begin{center}
\begin{tikzpicture}[scale=0.8]
\draw[thick, ->] (-5,0) -- (7,0);
\foreach \k in {-4, ..., 6}
{\draw (\k, -0.1) -- (\k,0.1);}
\fill (-3,0) circle(0.2);
\fill (-2,0) circle(0.2);
\fill (-1,0) circle(0.2);
\fill (2,0) circle(0.2);
\draw[dotted] (4,0) circle(0.2);
\fill[black!25, opacity=0.5] (4,0) circle(0.2);
\draw[dotted] (1,0) circle(0.2);
\fill[black!25, opacity=0.5] (1,0) circle(0.2);
\fill (6,0) circle(0.2);
\draw[thick, ->]  (-1.95,0.3) to[bend left]  (-1,0.3) ;
\draw[thick, red] (-1.7,0.3)  -- (-1.3,0.6);
\draw[thick, red] (-1.7,0.6)  -- (-1.3,0.3);
\draw[thick, ->] node{} (-2.05,0.3) to[bend right] node{} (-3,0.3);
\draw[thick, red] (-2.7,0.3)  -- (-2.3,0.6);
\draw[thick, red] (-2.7,0.6)  -- (-2.3,0.3);
\draw[thick, ->] node{} (2,0.3) to[bend left] node[above right]{\footnotesize{$\phi^R_{q, \nu}(2\vert 3)$}} (4,0.3);
\draw[thick, ->]  (1.9,0.3) to[bend right] node[above left]{\footnotesize{$\phi^L_{q, \nu}(1\vert 2)$}} (1,0.3) ;
\draw (2,-0.5) node {\footnotesize{$ x_n(t)$}};
\draw (6.2,-0.5) node {\footnotesize{$ x_{n-1}(t)$}};
\draw (-0.8,-0.5) node {\footnotesize{$ x_{n+1}(t)$}};
\end{tikzpicture}
\end{center}
\caption{Two admissible jumps for the n$^{th}$ particle in the $q$-Hahn asymmetric exclusion process. }
\label{fig:intro}
\end{figure}
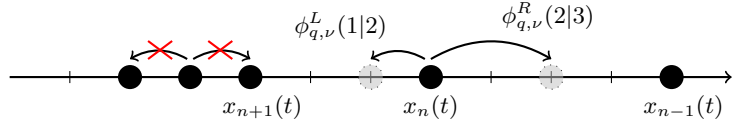
The rates $\phi^R_{q, \nu}(j\vert m)$ and $\phi^L_{q, \nu}(j\vert m)$, defined for all integers $1\leqslant j \leqslant m$, are not arbitrary. To ensure the exact solvability of the process, we fix
\begin{align*}
\phi^R_{q, \nu}(j\vert m) &:= R\  \frac{\nu^{j-1}}{[j]_q}  \frac{(\nu; q)_{m-j}}{(\nu; q)_{m}}\frac{(q; q)_{m}}{(q; q)_{m-j}}, \\ \phi^L_{q, \nu}(j\vert m) &:=L\  \frac{1}{[j]_q}\frac{(\nu; q)_{m-j}}{(\nu; q)_{m}}\frac{(q; q)_{m}}{(q; q)_{m-j}}.
\end{align*}
The $q$-Pochhammer symbol $(a ; q)_n$ is defined in Section \ref{sec:preliminaries}. The superscript $R$ (resp. $L$) on $\phi_{q, \nu}^R$ (resp. $\phi_{q, \nu}^L$) is not an exponent, but rather labels right (resp. left) jump rates.
The reader is referred to Section \ref{sec:continoustime} for a further discussion on the definition of the $q$-Hahn AEP.

The exact solvability of the $q$-Hahn AEP follows along the lines of the method developed in \cite{borodin2012duality} to study $q$-TASEP and ASEP. This method was later used in \cite{corwin2014q} to solve the discrete-time $q$-Hahn TASEP. The key Markov duality relation we use in step (1) below generalizes (though in continuous time) that of \cite{corwin2014q}. The steps in our analysis are as follows:
\begin{enumerate}
\item Via an exclusion/zero-range transformation applied to the $q$-Hahn AEP, we introduce (see Section \ref{sec:continoustime}) the \emph{$q$-Hahn asymmetric zero-range process} ($q$-Hahn AZRP) on $\mathbb{Z}$ with a finite number of particles. Owing to a particular symmetry of the $q$-Hahn distribution, we prove a Markov duality between the $q$-Hahn AEP and the $q$-Hahn AZRP (Proposition \ref{prop:duality}). This implies that $\EE\big[  \prod_{i=1}^k q^{x_{n_i}(t)+n_i} \big]$ solves the Kolmogorov backward equations for the $q$-Hahn AZRP with $k$ particles.
\item The generator of the $q$-Hahn AZRP is diagonalisable via Bethe ansatz, extending results from \cite{povolotsky2013integrability, corwin2014q} to the partially asymmetric case. Indeed, the discrete-time $q$-Hahn TAZRP was introduced by Povolotsky in \cite{povolotsky2013integrability} as the most general parallel update discrete time totally asymmetric `chipping' model on a ring lattice with factorized invariant measures which is solvable via Bethe ansatz. Combined with duality, Bethe ansatz yields  exact integral formulas for all moments of the random variable $q^{x_n(t)}$ (Proposition \ref{prop:nestedcontours}).
\item Using techniques introduced in the context of Macdonald processes \cite{borodin2014macdonald}, we use the moment formulas for $q^{x_n(t)}$ to compute a formula for the $e_q$-Laplace transform of $q^{x_n(t)}$ as a Fredholm determinant (Theorem \ref{th:fredholmgeneral}). This characterizes the law of $x_n(t)$.
\item We provide a rigorous asymptotic analysis of this Fredholm determinant in the case $q=\nu$ (i.e. the MADM case).
This is stated as Theorem \ref{thm:fluctuationsrarefactionfan} and Theorem \ref{thm:fluctuationsparticle1} and proves  Theorem \ref{thm:fluctuationsintro2} and Theorem \ref{thm:fluctuationsintro1} (see Section \ref{sec:MADM}). The asymptotic analysis here is an instance of the Laplace (or saddle-point) method which has been implemented in similar contexts in \cite{borodin2012free, ferrari2013tracy, barraquand2014phase, veto2014tracy}.
\end{enumerate}

Though the general gameplan for solving $q$-Hahn AEP is similar to that used in earlier works, there are certain technical novelties that arise in the present paper which we highlight.

\begin{itemize}
\item Previous works have been for totally asymmetric processes with right-finite initial data (such as the step initial data). In that case the position of the $n$th particle only depends on positions of the first $n-1$ particles. This is no longer true for partially asymmetric processes. This has two consequences: the processes are not obviously well-defined, and
 unlike in \cite{borodin2012duality, corwin2014q} the Markov duality functional defined in \eqref{eq:Hdual} is an infinite product involving infinitely many particle locations.
\item The proof of Proposition \ref{prop:uniqueness} is more involved than in previous papers, and more complete than \cite[Appendix C]{borodin2012duality}. In the totally asymmetric cases, the systems of ODEs considered were triangular, which implies uniqueness straightforwardly.
\item We use two different series representations of the $q$-digamma function (see Lemma \ref{lem:equivalentseries}), in order to connect the formulas arising from KPZ scaling theory with those coming from the saddle-point analysis of  Fredholm determinants in Section \ref{sec:KPZscaling}.
\item In the asymptotic analysis, we use an interpolation between cases for which formulas are manageable (cases $L=0 = 1-R$ and $R=qL$), in order to cover the general $R,L$ case.
\end{itemize}

\subsection*{Acknowledgements}
G.B. is grateful to Sandrine P\'ech\'e and B\'alint Vet\H o for stimulating discussions. G.B. and I.C. are grateful to Sidney Redner for discussions regarding \cite{gabel2010facilitated}, as well as helpful comments from the editor of AAP regarding this text.

G.B. acknowledges support from the Laboratoire de Probabilit\'es et Mod\`eles Al\'eatoires (LPMA) in Universit\'e Paris-Diderot.  I.C. was partially supported by the NSF through DMS-1208998 as well as by Microsoft Research and MIT through the Schramm Memorial Fellowship, by the Clay Mathematics Institute through the Clay Research Fellowship, by the Institute Henri Poincar\'e through the Poincar\'e Chair, and by the Packard Foundation through the Packard Fellowship for Science and Engineering.

\subsection*{Outline of the paper} In Section \ref{sec:preliminaries}, we provide definitions and establish useful identities for some $q$-deformed special functions.
In Section \ref{sec:continoustime}, we introduce the $q$-Hahn AEP
and establish the Fredholm determinant identity characterizing the distribution of particles positions. In Section \ref{sec:KPZscaling}, we study this process from the point of view of the conjectural KPZ scaling theory, and we state the predicted limit theorems. We sketch an asymptotic analysis of the Fredholm determinant, leading to the predicted Tracy-Widom limit theorem. In Section \ref{sec:MADM}, we make a rigorous asymptotic analysis in the case $\nu=q$, which corresponds to the MADM, thus proving Theorems \ref{thm:fluctuationsintro1} and \ref{thm:fluctuationsintro2}.

\section{Preliminaries on the q-deformed gamma and digamma functions}
\label{sec:preliminaries}

Fix hence forth that $q\in (0,1)$. For $a\in\mathbb{C}$ and $n\in \mathbb{Z}_{\geqslant 0}$,
define the $q$-Pochhammer symbol
$$ (a; q)_n = \prod_{i=0}^{n-1} (1-aq^i) \qquad \text{and }\qquad (a; q)_{\infty} = \prod_{i=0}^{\infty} (1-aq^i).$$
For an integer $n$, the $q$-integer $[n]_q$ is
$$[n]_q = 1+q+ \dots + q^{n-1} = \frac{1-q^n}{1-q}.$$
The $q$-factorial is defined as
\begin{equation}
[n]_q! = [n]_q[n-1]_q \dots [1]_q = \frac{(q ; q)_n}{(1-q)^n}. \label{eq:defqfactorial}
\end{equation}
The $q$-binomial coefficients are
$$ \left[ \begin{matrix}n\\k\end{matrix}\right]_q = \frac{[n]_q!}{[n-k]_q! [k]_q!}= \frac{(q ; q)_n}{(q ; q)_k(q ; q)_{n-k}}.$$
 For $\vert z\vert <1$, the $q$-binomial theorem \cite[Theorem 10.2.1]{andrews1999special} implies that
\begin{equation}
\sum_{k=0}^{\infty}\frac{(a ; q)_k}{(q ; q)_k}z^k = \frac{(az ; q)_{\infty}}{(z ; q)_{\infty}}.
\label{eq:qbinomial}
\end{equation}

 The $q$-gamma function is defined by
 $$ \Gamma_q(z) = (1-q)^{1-z} \frac{(q;q)_{\infty}}{(q^z;q)_{\infty}},$$
and the $q$-digamma function is defined by
 $$\Psi_q(z)  = \frac{\partial}{\partial z} \log \Gamma_q(z).$$
From the definition of the  $q$-digamma function, we have a series representation for $\Psi_q$,
\begin{equation}
\Psi_q(z) = \frac{\mathrm{d}}{\mathrm{d}z} \log \Gamma_q(z) = -\log(1-q) + \log(q) \sum_{k=0}^{\infty} \frac{q^{k+z}}{1-q^{k+z}}.
\label{eq:seriespsiq}
\end{equation}
 Let us also define a closely-related series that will appear in Section \ref{sec:KPZscaling},
\begin{equation*}
G_q(z):= \sum_{i=1}^{\infty} \frac{z^i}{[i]_{q}}.
\end{equation*}

\begin{lemma}\label{lemmaGseries}
For  $z\in \C$ with positive real part,
\begin{equation}
G_{q}(q^z) = \frac{1-q}{\log q} \big(\Psi_q(z) + \log(1-q) \big). \label{eq:lambertseries1}
\end{equation}
For $z\in \C$ with real part greater than $-1$,
\begin{equation}
G_{q^{-1}}(q^z) = \frac{q^{-1}-1}{\log q} \big(\Psi_q(z+1) + \log(1-q) \big). \label{eq:lambertseries2}
\end{equation}
\label{lem:equivalentseries}
\end{lemma}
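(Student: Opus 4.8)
The plan is to prove \eqref{eq:lambertseries1} first and then deduce \eqref{eq:lambertseries2} from it. Starting from the series representation \eqref{eq:seriespsiq}, the natural move is to expand each summand $\tfrac{q^{k+z}}{1-q^{k+z}}$ as a geometric series: for $\re(z)>0$ and $k\geqslant 0$ we have $|q^{k+z}|<1$, so
\[
\sum_{k=0}^{\infty} \frac{q^{k+z}}{1-q^{k+z}} = \sum_{k=0}^{\infty}\sum_{i=1}^{\infty} q^{i(k+z)} = \sum_{i=1}^{\infty} q^{iz}\sum_{k=0}^{\infty} q^{ik} = \sum_{i=1}^{\infty} \frac{q^{iz}}{1-q^{i}}.
\]
The interchange of the two sums is justified by absolute convergence (all terms are positive, or one can dominate by a geometric series in $q^{\re(z)}$). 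Now $\tfrac{1}{1-q^i} = \tfrac{1}{(1-q)[i]_q}$, so the right-hand side equals $\tfrac{1}{1-q}\sum_{i\geqslant 1}\tfrac{(q^z)^i}{[i]_q} = \tfrac{1}{1-q}\,G_q(q^z)$. Substituting into \eqref{eq:seriespsiq} gives $\Psi_q(z) = -\log(1-q) + \tfrac{\log q}{1-q} G_q(q^z)$, which rearranges exactly to \eqref{eq:lambertseries1}.

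For \eqref{eq:lambertseries2}, the cleanest route is to relate $G_{q^{-1}}$ to $G_q$ directly. Since $[i]_{q^{-1}} = 1 + q^{-1} + \dots + q^{1-i} = q^{1-i}[i]_q$, we get $\tfrac{1}{[i]_{q^{-1}}} = \tfrac{q^{i-1}}{[i]_q}$, hence
\[
G_{q^{-1}}(q^z) = \sum_{i=1}^{\infty}\frac{q^{iz}}{[i]_{q^{-1}}} = \sum_{i=1}^{\infty}\frac{q^{iz}q^{i-1}}{[i]_q} = q^{-1}\sum_{i=1}^{\infty}\frac{(q^{z+1})^i}{[i]_q} = q^{-1} G_q(q^{z+1}).
\]
This series converges provided $|q^{z+1}|<1$, i.e. $\re(z)>-1$, matching the hypothesis. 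Applying \eqref{eq:lambertseries1} with $z$ replaced by $z+1$ (valid since $\re(z+1)>0$) yields $G_q(q^{z+1}) = \tfrac{1-q}{\log q}(\Psi_q(z+1)+\log(1-q))$, and multiplying by $q^{-1}$ gives precisely \eqref{eq:lambertseries2} after noting $q^{-1}(1-q) = q^{-1}-1$.

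I do not anticipate a serious obstacle here; the only point requiring a little care is the justification of the double-sum interchange (and the radius-of-convergence bookkeeping that pins down the half-planes $\re(z)>0$ and $\re(z)>-1$), which is handled by the positivity/absolute-convergence argument above. One could alternatively prove \eqref{eq:lambertseries2} by the same geometric-expansion argument applied to $\Psi_q(z+1)$ shifted by one, but routing through the identity $[i]_{q^{-1}} = q^{1-i}[i]_q$ and reusing \eqref{eq:lambertseries1} is shorter and makes the $q \leftrightarrow q^{-1}$ symmetry transparent.
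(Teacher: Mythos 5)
Your proof is correct and follows essentially the same route as the paper: expand each summand of \eqref{eq:seriespsiq} as a geometric series, interchange the two absolutely convergent sums to identify $G_q(q^z)$, and then obtain \eqref{eq:lambertseries2} by shifting $z\mapsto z+1$ in \eqref{eq:lambertseries1}. The one place you add something is that the paper states tersely that \eqref{eq:lambertseries2} follows ``by replacing $z$ by $z+1$'' without spelling out the needed identity $G_{q^{-1}}(q^z) = q^{-1} G_q(q^{z+1})$, which you derive cleanly from $[i]_{q^{-1}} = q^{1-i}[i]_q$; that is a welcome clarification rather than a different method.
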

\begin{proof}
Assume $z\in \C$ with positive real part. Using the series representation (\ref{eq:seriespsiq}), we have that
$$\frac{1-q}{\log q} \big(\Psi_q(z) + \log(1-q) \big) = (1-q)\sum_{k=0}^{\infty} \frac{q^{k+z}}{1-q^{k+z}}.$$
Since $z$ has positive real part, we can write for all $k\geqslant 0$
$$\frac{q^{k+z}}{1-q^{k+z}} = \sum_{i= 1}^{\infty} q^{(k+z)i},$$
so that the right-hand-side in (\ref{eq:lambertseries1}) equals
$$ (1-q)\sum_{k=0}^{\infty} \sum_{i=1}^{\infty} q^{(k+z)i}.$$
Exchanging the summations yields
 $$\frac{1-q}{\log q} \big(\Psi_q(z) + \log(1-q) \big) =(1-q)\sum_{i=1}^{\infty} \sum_{k= 0}^{\infty}q^{(k+z)i} =  \sum_{i=1}^{\infty} \frac{(q^z)^i}{[i]_q}.$$
Equation (\ref{eq:lambertseries2}) can be deduced from (\ref{eq:lambertseries1}) by replacing $z$ by $z+1$.
\end{proof}

A consequence of Lemma \ref{lemmaGseries} is the following formula for the $k$-fold derivatives of the $q$-digamma function:
\begin{equation}
\Psi_q^{(k)}(z) = (\log q)^{k+1} \sum_{n=1}^{\infty} \frac{n^k q^{nz}}{1-q^n}.
\label{eq:digammaderivatives}
\end{equation}

\section{The $q$-Hahn AEP and AZRP}
\label{sec:continoustime}

Let us first recall the definition of the discrete-time $q$-Hahn-TASEP \cite{povolotsky2013integrability,corwin2014q}. Fix $q\in (0,1)$ and $0\leqslant \nu <\mu<1$. The $N$-particle $q$-Hahn TASEP is a discrete time Markov chain $\vec{x}(t) = \lbrace x_n(t) \rbrace_{n=0}^{N} \in \mathbb{X}^N$ with state space
$$\mathbb{X}^N  = \lbrace +\infty = x_0 > x_1 >\dots >x_N \ ;\  \forall n\geq 1, x_n\in \mathbb{Z} \rbrace.$$
At time $t+1$, each coordinate $x_n(t)$ is updated independently and in parallel to $x_n(t+1) = x_n(t)+j_n$ where $0\leqslant j_n \leqslant x_{n-1}(t)-x_n(t)-1$ is drawn according to the $q$-Hahn probability distribution. The $q$-Hahn probability distribution on $j\in \lbrace 0, 1, \dots ,m\rbrace$ is defined by
\begin{equation}
 \varphi_{q, \mu, \nu}(j\vert m) = \mu^j \frac{(\nu/\mu ; q)_{j}(\mu ; q)_{m-j}}{(\nu;q)_{m}} \left[\begin{matrix}m\\j\end{matrix}\right]_q.
 \label{eq:defqhahndistribution}
\end{equation}
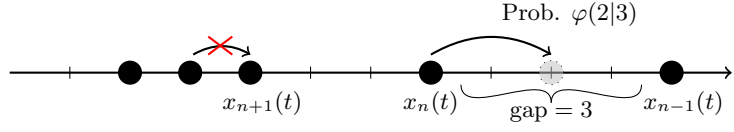
\begin{figure}
\begin{center}
\begin{tikzpicture}[scale=0.8]
\draw[thick, ->] (-5,0) -- (7,0);
\foreach \k in {-4, ..., 6}
{\draw (\k, -0.1) -- (\k,0.1);}
\fill (-3,0) circle(0.2);
\fill (-2,0) circle(0.2);
\fill (-1,0) circle(0.2);
\fill (2,0) circle(0.2);
\draw[dotted] (4,0) circle(0.2);
\fill[black!25, opacity=0.5] (4,0) circle(0.2);
\fill (6,0) circle(0.2);
\draw[thick, ->]  (-1.95,0.3) to[bend left]  (-1,0.3) ;
\draw[thick, red] (-1.7,0.3)  -- (-1.3,0.6);
\draw[thick, red] (-1.7,0.6)  -- (-1.3,0.3);

\draw[thick, ->] node{} (2,0.3) to[bend left] node[above right]{\footnotesize{Prob. $\varphi(2\vert 3) $}} (4,0.3);

\draw (2.5, -0.2) to[in=90, out=-90] (4,-0.5);
\draw (5.5, -0.2) to[in=90, out=-90] (4,-0.5);
\draw (4, -0.65) node{\footnotesize{$\rm{gap}=3$}};

\draw (2,-0.5) node {\footnotesize{$ x_n(t)$}};
\draw (6.2,-0.5) node {\footnotesize{$ x_{n-1}(t)$}};
\draw (-0.8,-0.5) node {\footnotesize{$ x_{n+1}(t)$}};
\end{tikzpicture}
\end{center}
\caption{Jumps probabilities in the (discrete-time) $q$-Hahn TASEP.}
\end{figure}
As we have explained in the Introduction, the exact solvability of this process is granted by a Markov duality with a zero-range process (the $q$-Hahn Boson process) and the Bethe ansatz solvability of the latter.

In this section, we introduce a generalization of a continuous time limit of the $q$-Hahn TASEP allowing jumps towards both directions. The key to this generalization is that the Markov duality is preserved under it.
Proposition 1.2 in \cite{corwin2014q}, shows that certain `$q$-moments' of the $q$-Hahn probability distribution enjoy a symmetry relation, which is ultimately responsible for an intertwining (and hence Markov duality) of the Markov generators of the $q$-Hahn Boson model and the $q$-Hahn TASEP. This relation is that for all positive integers $m$ and $y$,
\begin{equation}
\sum_{j=0}^m \varphi_{q, \mu, \nu}(j\vert m ) q^{jy} = \sum_{j=0}^y \varphi_{q, \mu, \nu}(j\vert y ) q^{jm}.
\label{eq:symmetry}
\end{equation}
The same identity replacing all variables by their inverse also holds:
\begin{equation}
\sum_{j=0}^m \varphi_{q^{-1}, \mu^{-1}, \nu^{-1}}(j\vert m ) q^{-jy} = \sum_{j=0}^y \varphi_{q^{-1}, \mu^{-1}, \nu^{-1}}(j\vert y ) q^{-jm}.
\label{eq:symmetryinverse}
\end{equation}
For $q, \mu, \nu$ as specified earlier, the weights $\varphi_{q, \mu, \nu}(j\vert m )$ and $\varphi_{q^{-1}, \mu^{-1}, \nu^{-1}}(j\vert m )$ are positive, and hence define probability distributions on $j\in\lbrace 0, 1, \ldots, m \rbrace$. Notice also that
\begin{equation*}
\varphi_{q^{-1}, \mu^{-1}, \nu^{-1}}(j\vert m ) = \left(\frac{\nu}{\mu} \right)^m \frac{1}{\nu^j} \varphi_{q, \mu, \nu}(j\vert m ).
\end{equation*}
One can extend the $q$-Hahn weights by continuity when $\nu$ goes to zero, so that
\begin{equation}
\varphi_{q, \mu, 0}(j\vert m ) = \mu^j (\mu ; q)_{m-j}\left[\begin{matrix}m\\j\end{matrix}\right]_q\  \text{ and } \ \varphi_{q^{-1}, \mu^{-1}, \infty}(j\vert m ) = \mathds{1}_{\lbrace j=m\rbrace}.
\label{eq:qhahnweightsnuzero}
\end{equation}

These observations motivate the introduction of a two-sided $q$-Hahn process where jumps to the left are distributed according to a $q$-Hahn distribution with parameters $q^{-1}, \mu^{-1}, \nu^{-1}$, and those to the right with parameters $q, \mu, \nu$ as before. We will define this sort of two-sided process, but only in continuous time to simplify possible obstacles that arise in discrete time. Let us first observe how the right and left jump distribution turns into continuous time rates for exponentially distributed jump waiting times.   Fix $q,\nu\in (0,1)$ and set $\mu = \nu+ (1-q)\epsilon$. Then for all $j\geqslant 1$, the jump probabilities of the $q$-Hahn distribution become jump rates given by the limits,
\begin{eqnarray}
\varphi_{q, \mu, \nu}(j\vert m )/\epsilon & \underset{\epsilon\to 0}{ \longrightarrow} & \frac{\nu^{j-1}}{[j]_q} \frac{(\nu; q)_{m-j}}{(\nu; q)_{m}}\frac{(q; q)_{m}}{(q; q)_{m-j}},\label{eq:limitprobaright}
\\
\varphi_{q^{-1}, \mu^{-1}, \nu^{-1}}(j\vert m )/\epsilon &\underset{\epsilon\to 0}{ \longrightarrow} &   \frac{\nu^{-1}}{[j]_q} \frac{(\nu; q)_{m-j}}{(\nu; q)_{m}}\frac{(q; q)_{m}}{(q; q)_{m-j}}.\label{eq:limitprobaleft}
\end{eqnarray}

Let us fix some notation and write these limiting rates as $\phi^R_{q, \nu}$ and $\phi^L_{q, \nu}$:
\begin{align*}
\phi^R_{q, \nu}(j\vert m) := R \frac{\nu^{j-1}}{[j]_q}  \frac{(\nu; q)_{m-j}}{(\nu; q)_{m}}\frac{(q; q)_{m}}{(q; q)_{m-j}}, \\ \phi^L_{q, \nu}(j\vert m) :=L \frac{1}{[j]_q}\frac{(\nu; q)_{m-j}}{(\nu; q)_{m}}\frac{(q; q)_{m}}{(q; q)_{m-j}}.
\end{align*}
The letters $R$ and $L$ stand for ``right'' and ``left'' as well as denote the values of the relative rates of jumps of particles in the process in those respective directions. Note that we deliberately removed the factor $\nu^{-1}$ (present in the $\e\to 0$ limit) from $\phi^L_{q, \nu}(j\vert m)$ to be consistent with models previously introduced in the particle system literature (see Section \ref{subsec:degenerations}). In this way, the rates are well-defined for $\nu=0$ and all results of this section hold for  $\nu=0$ as well. It is useful for later calculations to notice that
\begin{equation}
 R^{-1}\phi^R_{q^{-1}, \nu^{-1}}(j\vert m) = \frac{\nu}{q} L^{-1}\phi^L_{q, \nu}(j\vert m).
 \label{eq:inversionrates}
\end{equation}

\begin{definition}
We define the (continuous time) $q$-Hahn asymmetric zero-range process (abbreviated $q$-Hahn AZRP) as a Markov process $\vec{y}(t)\in \mathbb{Y}^{\infty}$ with state-space
$$\mathbb{Y}^{\infty} = \left\lbrace (y_0, y_1, \dots ) \ ; \forall i\in \Z_{\geqslant 0}, \ y_i \in \Z_{\geqslant 0}\text{ and } \sum_{i=0}^{\infty} y_i < \infty \right\rbrace.$$
and infinitesimal generator $B_{q, \nu}$ defined in (\ref{eqnctnsgen}). Before stating this generator, we must introduce some notation. For a vector $\vec{y} = (y_0, y_1, \dots)$, and any $j\leqslant y_i$ we denote
\begin{align*}
 \vec{y}^{j}_{i, i-1} = (y_0, \dots, y_{i-1}+j, y_i-j, y_{i+1}, \dots),\\
  \vec{y}^{j}_{i, i+1} = (y_0, \dots, y_{i-1}, y_i-j, y_{i+1}+j, \dots).
 \end{align*}
The operator $B_{q, \nu}$ is defined by its action on functions $\mathbb{Y}^{\infty}\rightarrow \R$ by
\begin{equation}\label{eqnctnsgen}
\big(B_{q, \nu} f\big)(\vec{y}) = \sum_{i=1}^{\infty} \left( \sum_{j=1}^{y_i} \phi^R_{q, \nu}(j\vert y_i) \left( f(\vec{y}^{j}_{i, i-1})- f(\vec{y}) \right) +
\sum_{j=1}^{y_i}  \phi^L_{q, \nu}(j\vert y_i) \left( f(\vec{y}^{j}_{i, i+1})- f(\vec{y}) \right)\right).
\end{equation}
\begin{figure}
\begin{center}
\begin{tikzpicture}[scale=0.5]
\draw  (0,0.5) -- (0,0) -- (8,0);
\draw[dashed] (8,0) -- (10,0);
\foreach \x in {1,2,...,8}
	\draw (\x,0) -- (\x,0.5);
\foreach \x in {0,1,...,6}
	\draw (\x+0.5,-0.4) node{\footnotesize{$y_{\x} $}};
\draw (8,-0.4) node{$\dots$};
\fill (1.5,0.5) circle(0.25);
\fill (2.5,0.5) circle(0.25);
\fill (3.5,0.5) circle(0.25);
\fill (4.5,0.5) circle(0.25);
\fill (5.5,0.5) circle(0.25);
\fill (6.5,0.5) circle(0.25);
\fill (2.5,1.5) circle(0.25);
\fill (4.5,1.5) circle(0.25);
\fill (6.5,1.5) circle(0.25);
\fill (4.5,2.5) circle(0.25);
\fill (4.5,3.5) circle(0.25);
\draw (4.3, 3.8) -- (4.1,3.8) -- (4.1,2.2) -- (4.3,2.2);
\draw[->, thick] (4.1,3) node{} to[bend right] (3.5,2)  node[above left]{\footnotesize{rate  $\phi^R(2\vert y_4)$}};
\draw (4.7, 3.8) -- (4.9,3.8) -- (4.9,1.2) -- (4.7,1.2);
\draw[->, thick] (4.9,2.5) node{} to[bend left] (5.5, 1.5)  node[above right]{\footnotesize{rate  $\phi^L(3\vert y_4)$}};
\end{tikzpicture}
\end{center}
\caption{Rates of two possible transitions in the $q$-Hahn asymmetric zero-range process.}
\label{fig:defBoson}
\end{figure}
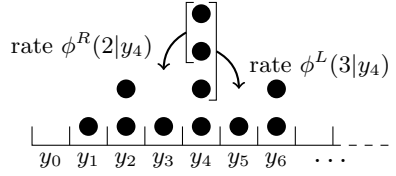
Informally, if the site $i$ is occupied by $y$ particles, $j$ particles move together to site $i-1$ with rate $\phi^R_{q, \nu}(j\vert y)$ whereas $j'$ particles move together to site $i+1$ with rate $\phi^L_{q, \nu}(j'\vert y)$, for all $1\leqslant j,j'\leqslant y$ (see Figure \ref{fig:defBoson}).

Similarly, we define the $q$-Hahn AEP as a Markov process $\vec{x}(t)\in \mathbb{X}^{\infty}$ where
the state space $\mathbb{X}^{\infty}$ is defined by
\begin{equation*}
\mathbb{X}^{\infty}= \left\lbrace  +\infty =  x_0 > x_1 > \dots >x_n>\dots \  \left|\  \begin{matrix*}[l]
 \forall n\geq 1,\  x_n\in \mathbb{Z} \\
 \exists N >0 ,  \forall n \geqslant N, x_n-x_{n+1}=1
\end{matrix*}\right. \right\rbrace .
\end{equation*}
In words, $\mathbb{X}^{\infty}$ is the space of particle configurations that have a right-most particle and a left-most empty site. This is the analogue of the state space $\mathbb{Y}^{\infty}$ by exclusion/zero-range transformation, that is if one maps the gaps between consecutive particles in the exclusion process to the number of particles on the sites of the zero-range process.

The $q$-Hahn AEP is defined by  the action of its infinitesimal generator $T_{q, \nu}$. Let us introduce some notations.
For a vector $\vec{x} = (x_0, x_1, \dots)$ we denote for any $j\in \mathbb{Z}$ and $i\geqslant 1$
$$ \vec{x}^{j}_i = (x_0, \dots, x_{i-1}, x_i+j, x_{i+1}, \dots).$$
The operator $T_{q, \nu}$ acts on  functions $\mathbb{X}^{\infty}\rightarrow \R$ by
\begin{multline} \label{eq:genAEP}
\big(T_{q, \nu} f\big)(\vec{x}) = \sum_{i=1}^{\infty} \left( \sum_{j=1}^{x_{i-1}-x_i-1}  \phi^R_{q, \nu}(j\vert x_{i-1}-x_i-1) \left( f(\vec{x}^{+j}_{i})- f(\vec{x}) \right) \right. +\\
\left. \sum_{j=1}^{x_{i}-x_{i+1}-1}  \phi^L_{q, \nu}(j\vert x_{i}-x_{i+1}-1) \left( f(\vec{x}^{-j}_{i})- f(\vec{x}) \right)\right).
\end{multline}
\end{definition}

\begin{remark}
It may be possible to define the $q$-Hahn AZRP (resp. $q$-Hahn AEP) on a larger state space including configurations with an infinite number of particles (resp. an infinite number of positive gaps between consecutive particles). Such a more general definition would add some complexity in several of the later statements. In the following, we study the zero-range processes only with a finite number of particles and the exclusion process starting only from the step-initial condition ($\forall n>0, x_n(0)=-n$), thus we prefer to restrict our definition to the state-spaces $\mathbb{X}^{\infty}$ and $\mathbb{Y}^{\infty}$.
\end{remark}

Before going further into the analysis of the $q$-Hahn AEP and AZRP, we must justify that  both processes are well defined.
\paragraph{\textbf{Existence of the $q$-Hahn AZRP}} Observe that the (finite) number of particles is conserved by the dynamics. Let $k$ be the number of particles in the initial condition. Then, each entry of the transition matrix of the process is bounded by
$$ k \cdot \max_{m\in\lbrace 1, \dots, k\rbrace} \sum_{j\leqslant m} \left(\phi^R_{q, \nu}(j\vert m)+ \phi^L_{q, \nu}(j\vert m) \right) <\infty.$$
The existence of a Markov process with the generator  (\ref{eqnctnsgen}) follows from the classical construction of Markov chains on a denumerable state space with bounded generator (see e.g. \cite[Chap. 4 Section 2]{ethier2009markov}).

\paragraph{\textbf{Existence of $q$-Hahn AEP}}
Although it should be possible to show that the generator (\ref{eq:genAEP}) defines uniquely a Markov semi-group (using e.g. \cite[Proposition 4.3]{borodin2012markov}), we prefer to give a probabilistic construction of the $q$-Hahn AEP that corresponds to the generator. Fix some $T>0$ and let us show that the processes is  well-defined on the time interval $[0,T]$. The construction will then extend to any time $t\in \R_+$ by the Markov property. We prove that the construction on $[0,T]$ is actually that of a continuous-time Markov chain on a finite (random) state space.
 Consider a (possibly random) initial condition in  $\mathbb{X}^{\infty}$. By the definition of the state space $\mathbb{X}^{\infty}$, there exists a (possibly random) integer $N$ such that for all $n>N$, $x_n(0)-x_{n+1}(0)=1$. Almost surely, there exists an integer $n>N$ such that  the particle labelled by $n$ does not move on the time interval $[0,T]$. Indeed, if this particle moves, then it has to move at least once to its right, since there is no room to its left. The rates at which a jump on the right occurs is bounded by
$$ M:= \sup_{m\geqslant 1} \sum_{j=1}^m\phi^R_{q, \nu}(j\vert m) <\infty.$$
Since all particles are equipped with independent Poisson clocks, there exists almost surely a particle that does not jump to the right. Finally, the $q$-Hahn AEP can be constructed on $[0,T]$ as a Markov chain on a finite state-space.

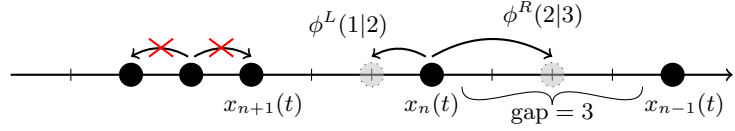
\begin{figure}
\begin{center}
\begin{tikzpicture}[scale=0.8]
\draw[thick, ->] (-5,0) -- (7,0);
\foreach \k in {-4, ..., 6}
{\draw (\k, -0.1) -- (\k,0.1);}
\fill (-3,0) circle(0.2);
\fill (-2,0) circle(0.2);
\fill (-1,0) circle(0.2);
\fill (2,0) circle(0.2);
\draw[dotted] (4,0) circle(0.2);
\fill[black!25, opacity=0.5] (4,0) circle(0.2);
\draw[dotted] (1,0) circle(0.2);
\fill[black!25, opacity=0.5] (1,0) circle(0.2);
\fill (6,0) circle(0.2);
\draw[thick, ->]  (-1.95,0.3) to[bend left]  (-1,0.3) ;
\draw[thick, red] (-1.7,0.3)  -- (-1.3,0.6);
\draw[thick, red] (-1.7,0.6)  -- (-1.3,0.3);
\draw[thick, ->] node{} (-2.05,0.3) to[bend right] node{} (-3,0.3);
\draw[thick, red] (-2.7,0.3)  -- (-2.3,0.6);
\draw[thick, red] (-2.7,0.6)  -- (-2.3,0.3);

\draw[thick, ->] node{} (2,0.3) to[bend left] node[above right]{\footnotesize{$\phi^R(2\vert 3) $}} (4,0.3);
\draw[thick, ->]  (1.9,0.3) to[bend right] node[above left]{\footnotesize{$\phi^L(1\vert 2) $}} (1,0.3) ;

\draw (2.5, -0.2) to[in=90, out=-90] (4,-0.5);
\draw (5.5, -0.2) to[in=90, out=-90] (4,-0.5);
\draw (4, -0.65) node{\footnotesize{$\rm{gap}=3$}};

\draw (2,-0.5) node {\footnotesize{$ x_n(t)$}};
\draw (6.2,-0.5) node {\footnotesize{$ x_{n-1}(t)$}};
\draw (-0.8,-0.5) node {\footnotesize{$ x_{n+1}(t)$}};
\end{tikzpicture}
\end{center}
\caption{Rates of two possible jumps in the $q$-Hahn asymmetric exclusion process.}
\end{figure}

\subsection{Markov duality}
We come now to the Markov duality between the $q$-Hahn AEP and the $q$-Hahn AZRP.
\begin{proposition}
Define $H : \mathbb{X}^{ \infty} \times \mathbb{Y}^{\infty} \to \R$ by
\begin{equation}\label{eq:Hdual}
H(\vec{x}, \vec{y}) := \prod_{i=0}^{\infty} q^{(x_i +i)y_i},
\end{equation}
with the convention that the product is $0$ when $y_0>0$.
 For any $(\vec{x}, \vec{y})$ in $\mathbb{X}^{\infty}\times \mathbb{Y}^{\infty}$, we have that
$$ B_{q, \nu}  H(\vec{x}, \vec{y}) = T_{q, \nu}  H(\vec{x}, \vec{y}),$$
where $B_{q, \nu}$ acts on the $\vec{y}$ variable,  $T_{q, \nu}$ acts on the $\vec{x}$ variable.
\label{prop:duality}
\end{proposition}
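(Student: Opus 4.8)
The plan is to verify the duality relation $B_{q,\nu}H(\vec{x},\vec{y}) = T_{q,\nu}H(\vec{x},\vec{y})$ by a direct computation on both sides, reducing everything to the symmetry identities \eqref{eq:symmetry} and \eqref{eq:symmetryinverse} satisfied by the $q$-Hahn $q$-moments. The first step is to dispose of the case $y_0>0$: then $H(\vec{x},\vec{y})=0$ by convention, and one checks that both generators applied to $H$ still vanish — on the $B_{q,\nu}$ side because any transition keeps $y_0$ unchanged or only increases it (particles can move into site $0$ from site $1$ but never out of site $0$ to site $-1$, since the state space starts at index $0$), and on the $T_{q,\nu}$ side because the factor $q^{(x_0+0)y_0}$ is still formally zero (or more carefully, one should set up the convention so that $H\equiv 0$ on that set and both generators preserve it). So from now on assume $y_0=0$ and let $i_1<i_2<\dots<i_k$ be the occupied sites of $\vec{y}$, all $\geqslant 1$.

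The second step is to rewrite $H$ in a form adapted to the exclusion picture. Because $H(\vec{x},\vec{y}) = \prod_{\ell} q^{(x_{i_\ell}+i_\ell)y_{i_\ell}}$, the only $x$-coordinates that appear are $x_{i_1},\dots,x_{i_k}$; moreover $x_{i_{\ell}}+i_\ell = x_{i_\ell} - x_{i_\ell-1} + \cdots$ telescopes into a sum of gaps, which is precisely the exclusion/zero-range correspondence. Concretely, writing $g_m = x_{m-1}-x_m-1 \geqslant 0$ for the gap sizes, one has $x_{i_\ell}+i_\ell = x_0 - \sum_{m=1}^{i_\ell}(g_m+1) + i_\ell = -\sum_{m=1}^{i_\ell} g_m$ (with the step-type normalization; in general $x_0+0$ is $+\infty$ but it multiplies $y_0=0$), so $H$ depends on $\vec{x}$ only through the gaps, and under the gap variables it becomes exactly $\prod_\ell q^{-(\text{partial sum of gaps}) y_{i_\ell}}$, matching the zero-range occupation variables. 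This is the conceptual heart: $H$ is the natural "transfer" function between the two representations.

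The third step is the algebraic core. On the $B_{q,\nu}$ side, the generator acts at each occupied site $i_\ell$: a block of $j$ particles hops from $i_\ell$ to $i_\ell-1$ at rate $\phi^R_{q,\nu}(j\mid y_{i_\ell})$ (changing the exponent of $q$ by a factor depending on $x_{i_\ell-1}$ vs $x_{i_\ell}$), or from $i_\ell$ to $i_\ell+1$ at rate $\phi^L_{q,\nu}(j\mid y_{i_\ell})$. Collecting the terms, $(B_{q,\nu}H)(\vec{x},\vec{y})/H(\vec{x},\vec{y})$ becomes a sum over $\ell$ of expressions of the shape $\sum_{j}\phi^R_{q,\nu}(j\mid y_{i_\ell})(q^{-j(\cdot)}-1) + \sum_j \phi^L_{q,\nu}(j\mid y_{i_\ell})(q^{j(\cdot)}-1)$, where the exponents involve the gap $g_{i_\ell}$ on one side and the gap $g_{i_\ell+1}$ on the other. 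On the $T_{q,\nu}$ side, the exclusion generator moves the particle $x_i$ right or left by $j$, which also changes $H$ by a power of $q$, and one gets a sum whose summands pair $\phi^R_{q,\nu}(j\mid g_i)$ and $\phi^L_{q,\nu}(j\mid g_i)$ against powers of $q$ built from the occupation numbers $y_{i-1},y_i,y_{i+1}$. Matching the two sides term by term then reduces, after recalling $\phi^R_{q,\nu}(j\mid m) = R\lim_{\e}\varphi_{q,\mu,\nu}(j\mid m)/\e$ and $\phi^L_{q,\nu}(j\mid m) = L\nu\lim_\e \varphi_{q^{-1},\mu^{-1},\nu^{-1}}(j\mid m)/\e$ (up to the $\nu^{-1}$ bookkeeping in \eqref{eq:inversionrates}), to the $\e\to 0$ derivative of the $q$-Hahn symmetry identities \eqref{eq:symmetry} and \eqref{eq:symmetryinverse} — i.e. $\sum_j \varphi(j\mid m)q^{jy} = \sum_j\varphi(j\mid y)q^{jm}$ differentiated in $\mu$ at $\mu=\nu$. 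The relation \eqref{eq:inversionrates} is what lets the right-jump identity and the left-jump identity be used simultaneously with a consistent set of parameters.

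I expect the main obstacle to be purely bookkeeping: keeping track of which gap/occupation variable sits in each exponent, handling boundary effects when $i_\ell-1$ or $i_\ell+1$ is itself occupied or is site $0$ (so that a block moving left out of $i_\ell=1$ would populate $y_0$ and kill $H$ — this must cancel correctly against the corresponding exclusion term, where the relevant gap becomes the one adjacent to the rightmost "virtual" particle $x_0=+\infty$), and making sure the infinite products converge (only finitely many factors are $\neq 1$ since $\vec{y}$ has finitely many particles, so this is not a real analytic issue). The genuinely non-trivial input — the symmetry \eqref{eq:symmetry} — is quoted from \cite{corwin2014q}, so the work here is to organize the two generator computations so that they visibly land on (the $\e$-derivative of) that identity, site by site.
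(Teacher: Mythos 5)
Your proposal is correct and is essentially the same argument as the paper's: apply both generators to $H$, observe that each factor in $H$ changes by a power of $q$ under the elementary moves, and organize the resulting sums so that, particle by particle (or site by site), the equality of the two sides becomes exactly the continuous-time degenerations of the $q$-Hahn symmetry identities \eqref{eq:symmetry} and \eqref{eq:symmetryinverse}. The rewriting of $H$ in gap coordinates is a harmless extra step (the paper works directly with the ratios $H(\vec{x}^{\pm j}_i,\vec{y})/H(\vec{x},\vec{y})$ and $H(\vec{x},\vec{y}^j_{i,i\pm 1})/H(\vec{x},\vec{y})$), and your bookkeeping of the $y_0>0$ case and of the $i=1$ boundary (where $g_1=+\infty$ forces $q^{jg_1}=0$, matching the vanishing of $H$ when $y_0$ becomes positive) is the right way to check those corner cases.
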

\begin{proof}
Under the scalings above and when $\epsilon$ goes to zero, identities (\ref{eq:symmetry}) and (\ref{eq:symmetryinverse}) degenerate to
\begin{equation}
\sum_{j=1}^m \phi^R_{q, \nu}(j\vert m) \left(q^{jy} - 1 \right) = \sum_{j=1}^y \phi^R_{q, \nu}(j\vert y) \left(q^{jm} - 1 \right),
\label{eq:symmetrydegenerate}
\end{equation}
and
 \begin{equation}
\sum_{j=1}^m \phi^L_{q, \nu}(j\vert m) \left(q^{-jy} - 1 \right) = \sum_{j=1}^y \phi^L_{q, \nu}(j\vert y) \left(q^{-jm} - 1 \right).
\label{eq:symmetryinversedegenerate}
\end{equation}
Let us explain how (\ref{eq:symmetrydegenerate}) is obtained. From the limit (\ref{eq:limitprobaright}), we know that for $j\geqslant 1$,
$$ \varphi_{q, \mu, \nu}(j\vert m) = \epsilon R^{-1}\phi^R_{q, \nu}(j\vert m) + o(\epsilon).$$
Since $\sum_{j=0}^m \varphi_{q, \mu, \nu}(j\vert m) = 1$, we know that
$$ \varphi_{q, \mu, \nu}(0\vert m) = 1 -  \sum_{j=1}^m \epsilon R^{-1}\phi^R_{q, \nu}(j\vert m) + o(\epsilon).$$
Finally, in terms of $\epsilon$, identity (\ref{eq:symmetry}) implies
\begin{multline*}
 1 -  \sum_{j=1}^m \epsilon R^{-1}\phi^R_{q, \nu}(j\vert m) + \sum_{j=1}^m \epsilon R^{-1}\phi^R_{q, \nu}(j\vert m) q^{jy} + o(\epsilon) = \\ 1 -  \sum_{j=1}^y\epsilon R^{-1}\phi^R_{q, \nu}(j\vert y) + \sum_{j=1}^y \epsilon R^{-1}\phi^R_{q, \nu}(j\vert y) q^{jm} + o(\epsilon).
 \end{multline*}
Substracting $1$ from both sides and identifying terms of order $\epsilon$, one gets identity (\ref{eq:symmetrydegenerate}). Identity (\ref{eq:symmetryinversedegenerate}) is obtained in a similar way.

Applying generators $B_{q, \nu}$ and $T_{q, \nu}$ to the function $H(\vec{x}, \vec{y})=\prod_{i=0}^{\infty} q^{(x_i+i)y_i}$ and using  (\ref{eq:symmetrydegenerate})  and (\ref{eq:symmetryinversedegenerate}) on each term of the sum, one gets that $B_{q, \nu} H = T_{q, \nu} H$. More precisely, we have that

\begin{multline*}
T_{q, \nu} H(\vec{x}, \vec{y}) =
 \prod_{i=1}^{\infty} \left(\sum_{j_i=0}^{x_{i-1}-x_i-1}  \phi^R_{q, \nu}(j_i\vert x_{i-1}-x_i-1 ) \big(q^{j_i y_i}-1\big)\ \  + \right.\\
\left. \sum_{k_i=0}^{x_{i}-x_{i+1}-1}  \phi^L_{q,  \nu}(k_i\vert x_{i}-x_{i+1}-1 ) \big(q^{-k_i y_i} -1\big)\right) \prod_{i=0}^{\infty} q^{(x_i+i)y_i}.
\end{multline*}
Applying (\ref{eq:symmetrydegenerate}) and (\ref{eq:symmetryinversedegenerate}) to the terms inside the parenthesis, we find that
\begin{eqnarray*}
T_{q, \nu} H(\vec{x}, \vec{y}) &=& \prod_{i=1}^{\infty} \left(  \sum_{s_i=0}^{y_i}  \phi^R_{q,  \nu}(s_i\vert y_i ) \big(q^{s_i (x_{i-1}-x_i-1)} - 1\big)  \right.\\
&&\left. \qquad + \sum_{t_i=0}^{y_i}  \phi^L_{q, \nu}(t_i\vert y_i ) \big(q^{-t_i (x_{i}-x_{i+1}-1)}-1 \big)\right) \prod_{i=0}^{\infty} q^{(x_i+i)y_i}\\
&=& B_{q, \nu} H(\vec{x}, \vec{y}).
\end{eqnarray*}
\end{proof}
\begin{remark}
One sees from the proof of Proposition \ref{prop:duality} that  our statement could be generalized to hold when the parameter $\nu$ is not the same for the jumps to the left and the jumps to the right, as well as when the
 parameter $\nu$ and the asymmetry parameters $R$ and $L$ vary over different  sites/particles provided that the parameters corresponding to the $i^{\rm th}$ particle in the exclusion process equal the parameters corresponding to the $i^{\rm th}$ site in the zero-range process.

It is not presently clear if beyond this duality, the integrability via Bethe ansatz of the $q$-Hahn AZRP (resp. $q$-Hahn AEP) process extends to the general time and site-dependent  (resp. particle-dependent) case (see \cite[Section 2.4]{corwin2014q} for a related discussion in the $q$-Hahn TASEP case).
\end{remark}

The $k$-particle $q$-Hahn AZRP process can be alternatively described in terms of ordered particle locations $\vec{n}(t) = \vec{n}(\vec{y}(t))$. The bijection between $\vec{n}$ coordinates and $\vec{y}$ coordinates is such that $n_i(t) = n$ if and only if $\sum_{j>n} y_j < i \leqslant \sum_{j\geqslant n} y_j$ and we impose that $\vec{n}\in \Weyl{k}$ where the Weyl chamber $\Weyl{k}$ is defined as
\begin{equation}\label{eqnWk}
\Weyl{k} = \left\lbrace  n_1 \geqslant n_2\geqslant \dots \geqslant  n_k \ ; \  n_i\in\Z_{\geqslant 0}, 1\leqslant i \leqslant k\right\rbrace.
\end{equation}
For a subset $I\subset\lbrace 1, \dots, k\rbrace $ and $\vec{n}\in \Weyl{k}$, we introduce the vector
$ \vec{n}_I^+  $ obtained from $\vec{n}$ by increasing by one all coordinates with index in $I$ ; and the vector
$ \vec{n}_I^-  $ obtained from $\vec{n}$ by decreasing by one all coordinates with index in $I$. As an example,
$$ \vec{n}_i^+  = (n_1,  \dots, n_{i-1}, n_i+1,n_{i+1}, \dots , n_k ).$$
With a slight abuse of notations, we will use the same symbol $B_{q, \nu}$ for the generator of the $q$-Hahn AZRP described in terms of variables in either $\mathbb{Y}^{\infty}$ or  $\Weyl{k}$.

\begin{definition}
We say that $h:\R_+\times \Weyl{k}$ solves the $k$-particle true evolution equation with initial data $h_0$ if it satisfies the conditions that:
\begin{enumerate}
\item for all $\vec{n}\in\Weyl{k}$ and $t\in\R_+$,
$$ \frac{\mathrm{d}}{\mathrm{d}t} h(t, \vec{n}) = B_{q, \nu} h(t, \vec{n}), $$
\item for all $\vec{n}\in\Weyl{k}$, $h(t,\vec{n}) \xrightarrow[t\to 0]{}  h_0(\vec{n})$,
\item for any $T>0$, there exists constants $c,C >0$ such that for all $\vec{n}\in \Weyl{k}$, $t\in [0,T]$,
$$ \vert h(t, \vec{n} ) \vert \leqslant Ce^{c \Vert \vec{n}\Vert},$$
and for all $\vec{n}, \vec{n}'\in \Weyl{k}$, $t\in [0,T],$
$$ \vert h(t, \vec{n}) - h(t, \vec{n}')\vert \leqslant C \vert e^{c\Vert \vec{n}\Vert } - e^{c\Vert \vec{n}'\Vert }\vert, $$
where we define the norm of a vector in $\Weyl{k}$ by
$ \Vert \vec{n} \Vert = \sum_{i=1}^k n_i$.
\end{enumerate}
\label{def:trueevol}
\end{definition}

\begin{proposition}
Consider any initial data $h_0$ such that there exists constants $c,C >0$ such that  for all $\vec{n}\in \Weyl{k}$,
$ \vert h_0( \vec{n} ) \vert \leqslant Ce^{c \Vert \vec{n}\Vert}$, and for all $\vec{n}, \vec{n}'\in \Weyl{k}$, $ \vert h_0( \vec{n}) - h_0( \vec{n}')\vert \leqslant C \vert e^{c\Vert \vec{n}\Vert} - e^{c\Vert \vec{n}'\Vert}\vert $. Then the solution of the true evolution equation is unique.
\label{prop:uniqueness}
\end{proposition}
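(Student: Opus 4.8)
The plan is to recast the equation as a linear ODE governed by an operator that is bounded on a suitably exponentially weighted sup-norm space, and then to conclude by a Gronwall-type iteration. Let $h^{(1)},h^{(2)}$ be two solutions of the $k$-particle true evolution equation with the same initial data $h_0$, and put $g=h^{(1)}-h^{(2)}$. Then $\frac{\mathrm d}{\mathrm dt}g(t,\vec n)=B_{q,\nu}g(t,\vec n)$ for all $\vec n\in\Weyl{k}$ and $t>0$, one has $g(t,\vec n)\to0$ as $t\to0$, and for each fixed $T>0$ there are constants $c,C>0$ with $|g(t,\vec n)|\leqslant Ce^{c\Vert\vec n\Vert}$ on $[0,T]$ (take the larger of the constants attached to $h^{(1)}$ and $h^{(2)}$). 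It suffices to prove $g\equiv0$ on $[0,T]$ for every such $T$, and then let $T\to\infty$.

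First I would record the uniform bounds that make $B_{q,\nu}$ ``small'' on the relevant scale. Since the $k$-particle AZRP conserves the total particle number $k$, from any $\vec n\in\Weyl{k}$ there are at most $2k$ admissible transitions, each to a configuration of the form $\vec n_I^{+}$ or $\vec n_I^{-}$ with $|I|\leqslant k$, and every rate is of the form $\phi^R_{q,\nu}(j\vert m)$ or $\phi^L_{q,\nu}(j\vert m)$ with $1\leqslant j\leqslant m\leqslant k$, hence at most $M_0:=\max_{1\leqslant j\leqslant m\leqslant k}\big(\phi^R_{q,\nu}(j\vert m)+\phi^L_{q,\nu}(j\vert m)\big)<\infty$ (finiteness is immediate from the explicit formulas, since $q,\nu\in[0,1)$). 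The decisive point is that a transition changes $\Vert\vec n\Vert=\sum_i n_i$ by $\pm|I|$, hence by at most $k$ in absolute value. Consequently, if $f$ satisfies $|f(\vec n)|\leqslant De^{c\Vert\vec n\Vert}$ for all $\vec n$, then $|B_{q,\nu}f(\vec n)|\leqslant KDe^{c\Vert\vec n\Vert}$ with $K:=2kM_0e^{ck}$; in Banach-space terms $B_{q,\nu}$ is a bounded operator on $X_c:=\{f:\Weyl{k}\to\R:\ \Vert f\Vert_c:=\sup_{\vec n}|f(\vec n)|e^{-c\Vert\vec n\Vert}<\infty\}$.

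Next I would integrate and iterate. For fixed $\vec n$, the map $t\mapsto g(t,\vec n)$ is $C^1$ on $(0,T]$ with derivative $B_{q,\nu}g(t,\vec n)$, which is a finite linear combination of the continuous functions $g(\cdot,\vec n')$ and is bounded on $[0,T]$ by the previous paragraph, hence Lebesgue integrable; since $g(t,\vec n)\to0$ as $t\to0$, the fundamental theorem of calculus gives $g(t,\vec n)=\int_0^tB_{q,\nu}g(s,\vec n)\,\mathrm ds$. Set $u(t):=\sup_{\vec n\in\Weyl{k}}|g(t,\vec n)|e^{-c\Vert\vec n\Vert}$; it is bounded by $C$ on $[0,T]$, and as a countable supremum of continuous functions it is Borel measurable. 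Inserting the estimate of the previous paragraph inside the integral yields $u(t)\leqslant K\int_0^tu(s)\,\mathrm ds$ for all $t\in[0,T]$. Iterating this inequality gives $u(t)\leqslant \frac{(Kt)^n}{n!}\sup_{[0,T]}u\to0$ as $n\to\infty$, so $u\equiv0$ on $[0,T]$, i.e.\ $g\equiv0$ there, which completes the argument.

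The main obstacle, and the reason this is harder than in the totally asymmetric case, is precisely obtaining control of $B_{q,\nu}$ that is uniform over $\Weyl{k}$ on each exponential scale: when jumps are one-sided the ODE system on $\Weyl{k}$ is triangular with respect to a natural partial order on $\Weyl{k}$, so one determines $g(t,\vec n)$ by induction on $\vec n$, whereas two-sided jumps destroy any such ordering and force the functional-analytic argument above. The remaining technical points are routine: that there are only finitely many transitions, all with bounded rates, when $m\leqslant k$ (from the explicit formulas for $\phi^R_{q,\nu},\phi^L_{q,\nu}$); that $\Vert\vec n_I^{\pm}\Vert-\Vert\vec n\Vert=\pm|I|$; and that the behaviour at $t=0$ is regular enough to apply the fundamental theorem of calculus. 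I note that the Lipschitz-type bound in Definition \ref{def:trueevol}(3) is not actually used in this uniqueness argument — the pointwise exponential growth bound already supplies the required a priori control.
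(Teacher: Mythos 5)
Your proof is correct, but it takes a genuinely different route from the paper. The paper gives a probabilistic argument modelled on \cite[Appendix~C]{borodin2012duality}: it represents a candidate solution as a functional of the $q$-Hahn AZRP by considering $t\mapsto\EE^{\vec n}[g(t,\vec n(T-t))]$, then shows this map is constant by computing both partial derivatives of $\phi(t,s)=\EE^{\vec n}[g(t,\vec n(s))]$ and invoking the Kolmogorov backward equation. This requires several auxiliary steps that your argument bypasses: a Poissonian bound on $\Vert\vec n(t)\Vert-\Vert\vec n\Vert$ to justify dominated convergence for $\partial_t\phi$, and, more delicately, the Lipschitz-type bound in condition~(3) to show that $\vec n\mapsto g(t,\vec n)$ lies in the domain of the AZRP semigroup so that $\partial_s\phi$ can be identified with $\EE^{\vec n}[B_{q,\nu}g(t,\vec n(s))]$. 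Your functional-analytic argument replaces all of this with the single observation that, because $k$ particles yield at most $2k$ admissible transitions with uniformly bounded rates and each transition shifts $\Vert\vec n\Vert$ by at most $k$, the generator $B_{q,\nu}$ is a bounded operator on the exponentially weighted sup-norm space $X_c$; the uniqueness then follows from the fundamental theorem of calculus and a Gronwall iteration. The constant you obtain should really be $K=4kM_0e^{ck}$ (a transition can both cost a factor $e^{ck}$ and contribute via the $-f(\vec n)$ term), but this does not affect the argument. Your observation that the Lipschitz half of condition~(3) plays no role in the uniqueness proof is correct and is a genuine simplification; it is needed in the paper only to place $g(t,\cdot)$ in the semigroup's domain, a step your route does not require. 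The trade-off is that the paper's probabilistic representation sits more naturally alongside the duality machinery used throughout, whereas yours is shorter and more self-contained, requiring no facts about the AZRP beyond conservation of the particle count and finiteness of the jump rates.
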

\begin{proof}
We provide a probabilistic proof adapted from \cite[Appendix C]{borodin2012duality}. Given $\vec{n}(t)$, a $q$-Hahn AZRP started from initial condition $\vec{n}(0)=\vec{n}$, we use a representation of any solution to the true evolution equation as a functional of the $q$-Hahn AZRP.

Let $h^1$ and $h^2$ two solutions of the true evolution equation with initial data $h_0$. Then $g:=h^1-h^2$ solves the true evolution equation with zero initial data. Let $T>0$. Our aim is to prove that for any $\vec{n}\in \Weyl{k}$, $g(T, \vec{n})=0$.  The idea is the following: By \emph{formally} differentiating the function $t\mapsto  \EE^{\vec{n}}[g(t, \vec{n}(T-t))]$ we find a zero derivative. Thus we expect that this function is constant, and hence its value for $t=T$, which is $g(T, \vec{n})$, equals the limit when $t$ goes to zero, which is expected to be $0$. Of course, these formal manipulations need to be justified and we will see how condition (3) of the true evolution equation applies.

By condition (3) of the true evolution equation, there exist constants $c, C>0$  such that for $t\in [0, T]$,
\begin{equation}
 \vert g( t, \vec{n}) \vert \leqslant Ce^{c \Vert \vec{n}\Vert}.
 \label{eq:exponentialbound}
\end{equation}

Let us first prove that on $[0, T]$, $\Vert \vec{n}(t)\Vert - \Vert \vec{n} \Vert $ can be bounded by
 a Poisson random variable $N_{T}$. Indeed, we have that for any $0\leqslant t\leqslant T$,
 $$ \PP^{\vec{n}}\left( \Vert \vec{n}(t)\Vert - \Vert \vec{n} \Vert =N\right) \leqslant \PP\left(\text{ at least }\frac{N}{k}\text{ events on the right occurred on } [0, T] \right).$$
The rate of an event on the right is crudely bounded by $ k \lambda$ where $\lambda=\max_{j\leqslant m \leqslant k} \phi^L(j\vert m)<\infty$. Thus, $\Vert \vec{n}(t) \Vert - \Vert \vec{n} \Vert$ can be bounded by a Poisson random variable $N_{T}$ depending only on the horizon time $T$.

Consider the function $[0, T]\rightarrow \R$, $t\mapsto \EE^{\vec{n}}[g(t, \vec{n}(T-t))]$. Given the exponential bound (\ref{eq:exponentialbound}) and the inequality $\Vert \vec{n}(t)\Vert\leqslant \Vert \vec{n} \Vert + N_{T}$, this function is well-defined. Moreover, one can apply dominated convergence to show that it is continuous. Thus, the limit when $t$ goes to zero is zero (because of the initial condition for $g$).

Let us show that the function is constant. First, observe that for $t\in [0, T]$,
$$ B_{q, \nu}g(t, \vec{n}) \leqslant \sum_{ \vec{n}\to\vec{n}'} 2 k \lambda \vert g(t, \vec{n}') \vert \leqslant (2 k)^2 \lambda
C e^{c(\Vert \vec{n}\Vert +k)}.$$
Since $\vec{n}(T- t)$ can be bounded by $\Vert \vec{n} \Vert + N_{T}$, 
\begin{equation}
\left| B_{q, \nu}g(t, \vec{n}(T -t)) \right| \leqslant (2 k)^2 \lambda
C e^{c(\Vert \vec{n}\Vert +k+N_{T})}.
\label{eq:boundoperator}
\end{equation}
Consider the function $\phi : [0, T]^2 \rightarrow \R$ defined by $\phi(t, s) = \EE^{\vec{n}}[g(t, \vec{n}(s))]$. Since the right-hand-side of (\ref{eq:boundoperator}) is integrable, one can take the partial derivative of $\phi$ with respect to $t$ inside the expectation, and we get
\begin{equation*}
\frac{\partial \phi}{\partial t} (t, s) = \EE^{\vec{n}}[B_{q, \nu} g(t, \vec{n}(s))]
\end{equation*}
The equality comes from condition (1) of true evolution equation, using dominated convergence. By condition (3) of the true evolution equation, we also have that for $t\in [0,T]$,
\begin{equation}
\vert g(t, \vec{n}) - g(t, \vec{n}') \vert \leqslant C \vert e^{c\Vert \vec{n} \Vert} - e^{c\Vert \vec{n}' \Vert}\vert.
\end{equation}
Hence, for any fixed $t\in  [0,T]$, we have for $0<s<s'<T$
\begin{equation}
\Big\vert \frac{\phi(t,s')- \phi(t, s)}{s'-s} \Big\vert \leqslant C \EE^{\vec{n}} \left[ \frac{\vert e^{c\Vert\vec{n}(s') \Vert}- e^{c\Vert\vec{n}(s) \Vert}\vert}{s-s'}\right].
\label{eq:derivativebyhand}
\end{equation}
Since one can bound $\vert \Vert \vec{n}(s)\Vert - \Vert \vec{n}(s')\Vert \vert $ by a Poisson random variable with parameter proportional to $s'-s$, the right-hand-side of (\ref{eq:derivativebyhand}) has a limit when $s'$ goes to $s$. This means that for any $t\in [0,T]$, the function $\vec{n}\mapsto g(t, \vec{n})$ is in the domain of the semi-group (of the $q$-Hahn AZRP). Thus, applying Kolmogorov backward equation and using the commutativity of the generator with the semi-group, we have that
\begin{equation*}
\frac{\partial \phi}{\partial s} (t, s) = \EE^{\vec{n}}[B_{q, \nu} g(t, \vec{n}(s))].
\end{equation*}
Consequently the derivative of $t\mapsto \EE^{\vec{n}}[g(t, \vec{n}(T-t))]$ is zero. Hence the function is constant, and the value at $t=T$, $ g(T, \vec{n})$ equals the limit when $t\to 0$ which is zero.
\end{proof}

\begin{corollary} \label{cor:duality}
For any fixed $\vec{x}\in\mathbb{X}^{\infty}$,
the function $u : \R_+ \times \Weyl{k} \to \R$ defined by
$$u(t, \vec{n}) = \EE^{\vec{x}}[H(\vec{x}(t), \vec{n})]$$
 satisfies the true evolution equation with initial data $h_0(\vec{n}) = H(\vec{x}, \vec{n})$. As a consequence,
the $q$-Hahn AEP and the $k$-particle $q$-Hahn AZRP are dual with respect to the function $H$, that is for any
$\vec{x}\in \mathbb{X}^{\infty}$ and $\vec{n}\in \Weyl{k}$,
$$ \EE^{\vec{x}}[H(\vec{x}(t), \vec{n})]  = \EE^{\vec{n}}[H(\vec{x}, \vec{n}(t))].$$
\end{corollary}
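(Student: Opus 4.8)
The plan is to prove that $u(t,\vec{n}) = \EE^{\vec{x}}[H(\vec{x}(t),\vec{n})]$ solves the $k$-particle true evolution equation of Definition~\ref{def:trueevol} with initial data $h_0(\vec{n}) = H(\vec{x},\vec{n})$, then to observe that $v(t,\vec{n}) := \EE^{\vec{n}}[H(\vec{x},\vec{n}(t))]$ is another solution with the same initial data, so that Proposition~\ref{prop:uniqueness} forces $u\equiv v$, which is precisely the asserted duality identity. Thus for $u$ one must verify the three conditions of Definition~\ref{def:trueevol}; for $v$ they will be automatic.

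The key preliminary estimate is an a priori bound on $H(\vec{x}(t),\vec{n})$. Since $\vec{x}\in\mathbb{X}^{\infty}$, the exclusion constraint makes $n\mapsto x_n+n$ non-increasing, bounded above by $x_1+1$, and eventually constant with limit $a_{\infty}:=\lim_{n\to\infty}(x_n+n)$. The same finite-speed-of-propagation considerations used in the construction of the $q$-Hahn AEP on $[0,T]$ show that almost surely all but finitely many particles stay put on $[0,T]$, so $a_{\infty}$ is conserved: $\lim_{n\to\infty}(x_n(t)+n)=a_{\infty}$ for all $t\in[0,T]$ almost surely. Hence each factor $q^{x_{n_j}(t)+n_j}$ lies in $[q^{x_1(t)+1},q^{a_{\infty}}]$ and $|H(\vec{x}(t),\vec{n})|=\prod_{j=1}^{k}q^{x_{n_j}(t)+n_j}\leqslant\max(1,q^{ka_{\infty}})=:C_0$, a deterministic constant depending only on $\vec{x},q,k$. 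By the same monotonicity, $H(\vec{x},\cdot)$ is bounded by $C_0$ on $\Weyl{k}$ and changes only by a bounded multiplicative factor under a single-site zero-range move; this is what condition (3) supplies in the proof of Proposition~\ref{prop:uniqueness}, and both $u$ and $v$ inherit such bounds. For $v$, boundedness of $H(\vec{x},\cdot)$ together with the bounded generator of the $q$-Hahn AZRP makes all of conditions (1)--(3) immediate.

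For condition (1) for $u$, I would apply Kolmogorov's backward equation to the $q$-Hahn AEP --- a well-defined pure-jump Markov process with generator $T_{q,\nu}$ by the construction above --- and the bounded observable $\vec{x}\mapsto H(\vec{x},\vec{n})$, obtaining $\tfrac{\mathrm{d}}{\mathrm{d}t}u(t,\vec{n})=\EE^{\vec{x}}[(T_{q,\nu}H(\cdot,\vec{n}))(\vec{x}(t))]$, with the differentiation under the expectation justified by the uniform bound $C_0$ (which also bounds $T_{q,\nu}H=B_{q,\nu}H$, a sum of at most $2k$ terms with bounded coefficients, via Proposition~\ref{prop:duality}). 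Proposition~\ref{prop:duality} rewrites the integrand as $(B_{q,\nu}H(\vec{x}(t),\cdot))(\vec{n})$, and since $B_{q,\nu}$ in the $\vec{n}$-variable is a finite linear combination it commutes with $\EE^{\vec{x}}[\cdot]$; hence $\tfrac{\mathrm{d}}{\mathrm{d}t}u(t,\vec{n})=B_{q,\nu}u(t,\vec{n})$. Condition (2) is immediate: $\vec{x}(t)\to\vec{x}$ as $t\to0$, so dominated convergence with dominator $C_0$ gives $u(t,\vec{n})\to H(\vec{x},\vec{n})=h_0(\vec{n})$.

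With all three conditions in hand, Proposition~\ref{prop:uniqueness} yields $u\equiv v$, i.e. $\EE^{\vec{x}}[H(\vec{x}(t),\vec{n})]=\EE^{\vec{n}}[H(\vec{x},\vec{n}(t))]$. The only genuinely non-formal point --- and the main obstacle --- is the a priori bound: a leftward ``traffic jam'' among the infinitely many particles could in principle push $x_{n_j}(t)+n_j$ towards $-\infty$ and blow up $H$, so one really must exploit the structure of $\mathbb{X}^{\infty}$ (a right-most particle and a left-most empty site) together with the finite-speed-of-propagation estimate to conclude that the conserved quantity $a_{\infty}$ keeps $H(\vec{x}(t),\cdot)$ uniformly bounded. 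Once that is secured, the remainder is bookkeeping around the backward equation, Proposition~\ref{prop:duality}, and Proposition~\ref{prop:uniqueness}.
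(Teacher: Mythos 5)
Your proof is correct and follows essentially the same route as the paper's: apply the Kolmogorov backward equation for the AEP, invoke Proposition~\ref{prop:duality} to replace $T_{q,\nu}$ acting in the $\vec{x}$-variable by $B_{q,\nu}$ acting in the $\vec{n}$-variable, observe that $v(t,\vec{n})=\EE^{\vec{n}}[H(\vec{x},\vec{n}(t))]$ is also a solution by the backward equation for the AZRP, and conclude by Proposition~\ref{prop:uniqueness}. The one place where you go beyond the paper --- which simply asserts ``the growth condition is clear'' --- is your explicit justification of the a priori bound: the monotone sequence $n\mapsto x_n+n$ with conserved tail value $a_\infty$, established via the same finite-speed argument used to construct the process, yields a deterministic bound $|H(\vec{x}(t),\vec{n})|\leqslant\max(1,q^{ka_\infty})$ uniform in $t\in[0,T]$ and $\vec{n}\in\Weyl{k}$. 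This is a genuine and worthwhile filling-in of a gap the paper leaves implicit, but it is an elaboration rather than a different method.
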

\begin{proof}
By the Kolmogorov backward equation for the $q$-Hahn AZRP, it is clear that $(t, \vec{n})\mapsto \EE^{\vec{n}}[H(\vec{x}, \vec{n}(t))]$ satisfies the true evolution equation with initial data $  \EE[H(\vec{x}, \vec{n})]$ (the growth condition is clear). On the other hand, Kolmogorov backward equation for the $q$-Hahn AEP yields
$$ \frac{\rm d}{\mathrm{d}t} \EE^{\vec{x}}[H(\vec{x}(t), \vec{n})]  = T_{q, \nu}\EE^{\vec{x}}[H(\vec{x}(t), \vec{n})] = \EE^{\vec{x}}[ T_{q, \nu} H(\vec{x}(t), \vec{n})].$$
Proposition \ref{prop:duality} then implies
$$ \frac{\rm d}{\mathrm{d}t} u(t, \vec{n})  = \EE^{\vec{x}}[ B_{q, \nu} H(\vec{x}(t), \vec{n})] = B_{q, \nu}u(t, \vec{n}).$$
Since $u$ satisfies the growth condition and the initial condition, $u$ solves the true evolution equation. Hence, by Proposition \ref{prop:uniqueness}, we have that for all $\vec{x}\in \mathbb{X}^{\infty}$ and $\vec{n}\in \Weyl{k}$,
$$ \EE^{\vec{x}}[H(\vec{x}(t), \vec{n})]  = \EE^{\vec{n}}[H(\vec{x}, \vec{n}(t))].$$
\end{proof}

\subsection{Bethe ansatz solvability and moment formulas}
In light of Corollary \ref{cor:duality}, in order to compute $\mathbb{E}\left[\prod_{i=1}^{k}q^{x_{n_i}(t)+n_i} \right]$, we must solve the true evolution equation. Proposition \ref{prop:systemode} provides a rewriting of the $k$-particle true evolution equation as a $k$-particle free evolution equation with $k-1$ two-body boundary conditions.
\begin{proposition}
Let $\vec{x}(\cdot)$ denote the $q$-Hahn AEP.
If  $u:\R_+ \times \Z^k \to \C$ solves:
\begin{enumerate}
\item {\em ($k$-particle free evolution equation)} for all $\vec{n}\in \Z^k$ and $t\in \R_{+}$,
$$
\frac{\mathrm{d}}{\mathrm{d}t}  u(t;\vec{n}) = \frac{1-q}{1-\nu} \sum_{i=1}^{k} \Big[R \left( u(t;\vec{n}_{i}^{-}) - u(t;\vec{n}) \right) + L \left( u(t;\vec{n}_{i}^{+}) - u(t;\vec{n}) \right)\Big];
$$
\item {\em ($k-1$ two-body boundary conditions)} for all $\vec{n}\in \Z^k$ and $t\in \R_{+}$ if $n_i= n_{i+1}$  for some $i\in \lbrace 1,\ldots, k-1\rbrace$ then
$$
\alpha u(t; \vec{n}_{i,i+1}^{-}) +\beta u(t;\vec{n}_{i+1}^{-}) + \gamma u(t;\vec{n}) -u(t;\vec{n}_i^{-}) = 0
$$
where the parameters $\alpha,\beta,\gamma$ are defined in terms of $q$ and $\nu$ as
$$
\alpha = \frac{\nu(1-q)}{1-q\nu},\qquad \beta= \frac{q-\nu}{1-q\nu},\qquad \gamma = \frac{1-q}{1-q\nu};
$$
\item {\em (initial data)} for all $\vec{n}\in W_k$, $u(0;\vec{n}) = \mathbb{E}\left[\prod_{i=1}^{k}q^{x_{n_i}(0)+n_i} \right]$;
\item for any $T>0$, there exists constants $c, C>0$ such that for all $\vec{n}\in W_k$, $t\in [0,T]$,
$$ \vert u(t; \vec{n})\vert \leqslant  Ce^{c\Vert \vec{n}\Vert}, $$
and for all $\vec{n}, \vec{n}'\in \Weyl{k}$, $t\in [0,T]$,
$$ \vert h(t, \vec{n}) - h(t, \vec{n}')\vert \leqslant C \vert e^{c\Vert \vec{n}\Vert } - e^{c\Vert \vec{n}'\Vert }\vert ;$$
\end{enumerate}
then for all $\vec{n}\in W_k $ and all $t\in\R_+$, $u(t;\vec{n}) = \mathbb{E}\left[\prod_{i=1}^{k}q^{x_{n_i}(t)+n_i} \right]$.
\label{prop:systemode}
\end{proposition}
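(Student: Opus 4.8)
The plan is to reduce the statement, via the duality already in hand, to the uniqueness theorem, and then to a single algebraic identity. By Corollary~\ref{cor:duality}, the function $v(t,\vec n):=\EE\big[\prod_{i=1}^k q^{x_{n_i}(t)+n_i}\big]$ solves the $k$-particle true evolution equation of Definition~\ref{def:trueevol} with initial data $h_0(\vec n)=\EE\big[\prod_{i=1}^k q^{x_{n_i}(0)+n_i}\big]$, and by Proposition~\ref{prop:uniqueness} it is the \emph{only} solution obeying the growth and Lipschitz bounds there. Hence it suffices to show that the restriction of any $u$ satisfying (1)--(4) to $\R_+\times\Weyl{k}$ is also a solution of the true evolution equation with the same initial data. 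Conditions (2) and (3) of Definition~\ref{def:trueevol} hold by hypotheses (3) and (4) of the proposition (they are verbatim the same bounds), so the whole content is condition (1): for $\vec n\in\Weyl{k}$ and $t\in\R_+$,
\[
\frac{\mathrm d}{\mathrm dt}u(t;\vec n)=B_{q,\nu}u(t;\vec n),
\]
where $B_{q,\nu}$ is the AZRP generator in $\vec n$-coordinates, which for $\vec n\in\Weyl{k}$ only uses values of $u$ at points of $\Weyl{k}$.

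By hypothesis (1), $\tfrac{\mathrm d}{\mathrm dt}u(t;\vec n)$ equals the free operator $(\mathcal L^{\mathrm{free}}u)(\vec n):=\tfrac{1-q}{1-\nu}\sum_{i=1}^k\big[R(u(\vec n_i^-)-u(\vec n))+L(u(\vec n_i^+)-u(\vec n))\big]$, so what remains is the purely algebraic claim that $\mathcal L^{\mathrm{free}}u(\vec n)=B_{q,\nu}u(\vec n)$ for every $\vec n\in\Weyl{k}$, whenever $u(t,\cdot)$ satisfies the two-body boundary conditions (2). Reading those conditions as the substitution rule $u(\vec n_i^-)=\alpha\,u(\vec n_{i,i+1}^-)+\beta\,u(\vec n_{i+1}^-)+\gamma\,u(\vec n)$ (valid whenever $n_i=n_{i+1}$, and yielding, when applied at a raised vector, a companion rule for an upward move), one rewrites in $\mathcal L^{\mathrm{free}}u(\vec n)$ exactly those summands $u(\vec n_i^\pm)$ whose argument leaves $\Weyl{k}$ — which happens precisely when a non-extremal coordinate inside a maximal block of equal coordinates of $\vec n$ is moved by one, producing a single order-violation between neighbours differing by exactly $1$ — and iterates the rule until all arguments lie in $\Weyl{k}$. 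This is the two-sided analogue of the reduction carried out for the $q$-Hahn TASEP in \cite{corwin2014q}.

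Since the substitution rule confined to one block never touches coordinates of other blocks, the identity is local in the maximal blocks of $\vec n$ and reduces to the case of a single block $\vec n=(v,\dots,v)\in\Weyl{m}$, where it reads
\[
\frac{1-q}{1-\nu}\sum_{i=1}^m\big[R(u(\vec n_i^-)-u(\vec n))+L(u(\vec n_i^+)-u(\vec n))\big]=\sum_{j=1}^m\Big(\phi^R_{q,\nu}(j\vert m)(u(\vec n^{\downarrow j})-u(\vec n))+\phi^L_{q,\nu}(j\vert m)(u(\vec n^{\uparrow j})-u(\vec n))\Big),
\]
with $\vec n^{\downarrow j}$ (resp. $\vec n^{\uparrow j}$) the $\Weyl{m}$-vector obtained by lowering (resp. raising) $j$ of the coordinates from $v$ to $v-1$ (resp. $v+1$). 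I would prove this by induction on $m$. The base case $m=1$ is immediate because $\phi^R_{q,\nu}(1\vert1)=R\tfrac{1-q}{1-\nu}$ and $\phi^L_{q,\nu}(1\vert1)=L\tfrac{1-q}{1-\nu}$. For the inductive step, one applies the substitution rule iteratively to every out-of-$\Weyl{m}$ argument $u(\vec n_i^\pm)$, expands it into the $u(\vec n^{\downarrow a})$ and $u(\vec n^{\uparrow a})$, and collects coefficients; matching the coefficient of each $u(\vec n^{\downarrow a})$ with $\phi^R_{q,\nu}(m-a\vert m)$ (and symmetrically on the $\uparrow$ side) becomes a $q$-Pochhammer identity that follows from the explicit formulas for $\phi^{R/L}_{q,\nu}$ together with $\alpha=\tfrac{\nu(1-q)}{1-q\nu}$, $\beta=\tfrac{q-\nu}{1-q\nu}$, $\gamma=\tfrac{1-q}{1-q\nu}$ (and $\alpha+\beta+\gamma=1$, which makes the $u(\vec n)$ coefficient match automatically). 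Alternatively the $\phi^L$-half could be deduced from the (already known) $\phi^R$-half by the reflection $\vec n\mapsto-\vec n$ combined with $q\mapsto q^{-1}$, $\nu\mapsto\nu^{-1}$, under which $\phi^L_{q,\nu}$ is proportional to $\phi^R_{q^{-1},\nu^{-1}}$ by \eqref{eq:inversionrates} and $(\alpha,\beta,\gamma)\mapsto(\gamma,\beta,\alpha)$.

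The soft part — duality plus uniqueness — costs essentially nothing. The main obstacle is the last step: organising the repeated use of the two-body boundary conditions (how many times each is applied, and into which element of $\Weyl{m}$ each resulting term folds), and then recognising the accumulated coefficients as the $q$-Hahn rates $\phi^{R/L}_{q,\nu}(j\vert m)$, i.e. proving the requisite family of $q$-binomial/$q$-Pochhammer identities.
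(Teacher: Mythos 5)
Your plan has the same skeleton as the paper's proof: invoke Corollary~\ref{cor:duality} to identify $\EE\big[\prod_i q^{x_{n_i}(t)+n_i}\big]$ as a solution of the true evolution equation of Definition~\ref{def:trueevol}; observe that conditions (3) and (4) of the proposition supply its initial data, growth and Lipschitz hypotheses; show that conditions (1)--(2) give the generator equation; reduce the latter, by locality within maximal blocks of equal coordinates, to the single-block identity which is Lemma~\ref{lem:mbody} in the paper; and close with Proposition~\ref{prop:uniqueness}. The derived ``upward'' companion boundary rule, the block reduction, and the inversion symmetry sending $\phi^L_{q,\nu}$ to $\phi^R_{q^{-1},\nu^{-1}}$ via \eqref{eq:inversionrates} and $(\alpha,\beta,\gamma)\mapsto(\gamma,\beta,\alpha)$ are all correctly identified and agree with the paper.

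Where you diverge, and where the proof stops short, is the single-block identity itself. You propose to prove it by induction on the block size $m$, iterating the boundary substitution and matching the accumulated coefficients against $\phi^R_{q,\nu}(j\vert m)$ and $\phi^L_{q,\nu}(j\vert m)$, and you rightly flag the resulting family of $q$-Pochhammer identities as ``the main obstacle'' --- but you do not prove them, so this is a genuine gap rather than merely a stylistic difference. The paper avoids the induction outright by invoking Povolotsky's non-commutative binomial theorem \cite[Theorem~1]{povolotsky2013integrability}: in any associative algebra with the quadratic relation $BA=\alpha AA+\beta AB+\gamma BB$ (your substitution rule, encoded operatorially as in \eqref{eq:quadraticrelation}) one has
$\big(\tfrac{\mu-\nu}{1-\nu}A+\tfrac{1-\mu}{1-\nu}B\big)^m=\sum_{j}\varphi_{q,\mu,\nu}(j\vert m)A^jB^{m-j}$; setting $\mu=\nu+(1-q)\epsilon$ and extracting the $O(\epsilon)$ coefficient yields \eqref{eq:binomialdegenerated}, i.e.\ the right-jump half of the block identity in one stroke, and the left-jump half follows by the $q\mapsto q^{-1}$, $\nu\mapsto\nu^{-1}$ inversion you already describe. (Equivalently one can cite \cite[Lemma~2.4]{corwin2014q} and take the continuous-time limit.) Either of these replaces your unproved inductive step; as written, the proposal correctly maps out the argument but leaves its combinatorial core unfinished.
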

\begin{proof}
In the totally asymmetric case, that is when $R=1$ and $L=0$, this result can be seen as a degeneration of Proposition 1.7 in \cite{corwin2014q}.

First we show that conditions (1) and (2) imply that $u$ satisfies condition (1) of the true evolution equation in Definition \ref{def:trueevol}.
Condition (2) in Proposition \ref{prop:systemode} says that for all $\vec{n}$ such that $n_i=n_{i+1}$,
\begin{equation}
 \frac{\nu(1-q)}{1-q\nu} u(t; \vec{n}_{i,i+1}^{-}) +\frac{q-\nu}{1-q\nu} u(t;\vec{n}_{i+1}^{-}) + \frac{1-q}{1-q\nu} u(t;\vec{n}) -u(t;\vec{n}_i^{-}) = 0.
 \label{eq:BC1}
\end{equation}
This is equivalent to saying that for all $\vec{n}$ such that $n_i=n_{i+1}$,
\begin{equation}
 \frac{\nu^{-1}(1-q^{-1})}{1-q^{-1}\nu^{-1}} u(t; \vec{n}_{i,i+1}^{+}) +\frac{q^{-1}-\nu^{-1}}{1-q^{-1}\nu^{-1}} u(t;\vec{n}_{i}^{+}) + \frac{1-q^{-1}}{1-q^{-1}\nu^{-1}} u(t;\vec{n}) -u(t;\vec{n}_{i+1}^{+}) = 0.
 \label{eq:BC2}
\end{equation}
Indeed, if we set $\vec{m}:= \vec{n}_{i,i+1}^{-}$ in (\ref{eq:BC1}),  we have that $\vec{n}_{i+1}^{-} = \vec{m}_{i}^{+}, \vec{n} = \vec{m}_{i, i+1}^{+}$ and $\vec{n}^-_{i} = \vec{m}^+_{i+1} $. Dividing the numerator and the denominator of each coefficient in (\ref{eq:BC1}) by $-q\nu$, we have
\begin{align*}
\frac{\nu(1-q)}{1-q\nu} u(t; \vec{n}_{i,i+1}^{-})=\frac{1-q^{-1}}{1-q^{-1}\nu^{-1}} u(t;\vec{m}),  \\
 \frac{q-\nu}{1-q\nu} u(t;\vec{n}_{i+1}^{-}) =  \frac{q^{-1}-\nu^{-1}}{1-q^{-1}\nu^{-1}} u(t;\vec{m}_{i}^{+}), \\
 \frac{1-q}{1-q\nu} u(t;\vec{n}) =  \frac{\nu^{-1}(1-q^{-1})}{1-q^{-1}\nu^{-1}} u(t; \vec{m}_{i,i+1}^{+}).
\end{align*}
Finally we get exactly (\ref{eq:BC2}) with $\vec{n}$ replaced by $\vec{m}$.

The next lemma explains the effect of the boundary condition.
\begin{lemma}
Suppose that a function $f:\Z^m \to \R$ satisfies the boundary conditions that for all $\vec{n}$ such that $n_i=n_{i+1}$ for some $i\in \{1,\ldots, k-1\}$,
\begin{equation*}
\alpha f(\vec{n}_{i,i+1}^{-}) +\beta f(\vec{n}_{i+1}^{-}) + \gamma f(\vec{n}) -f(\vec{n}_i^{-}) = 0.
\end{equation*}
Then for $\vec{n} = (n, \dots, n)$, the function $f$ also satisfies
\begin{equation}
\sum_{i=1}^{m} R\frac{1-q}{1-\nu}\left(f(\vec{n}_{i}^{-}) - f(\vec{n})\right) = \sum_{j=1}^{m} \phi^R_{q,\nu}(j|m) f(\underbrace{n,\ldots,n}_{m-j},\underbrace{n-1,\ldots, n-1}_{j}),
\label{eq:mbodyright}
\end{equation}
and
\begin{equation}
\sum_{i=1}^{m} L\frac{1-q}{1-\nu}\left(f(\vec{n}_{i}^{+}) - f(\vec{n})\right) = \sum_{j=1}^{m} \phi^L_{q,\nu}(j|m) f(\underbrace{n+1,\ldots, n+1}_{j},\underbrace{n,\ldots, n}_{m-j}).
\label{eq:mbodyleft}
\end{equation}
\label{lem:mbody}
\end{lemma}
\begin{proof}
Equation (\ref{eq:mbodyright}) is exactly the conclusion of Lemma 2.4 in \cite{corwin2014q} with $\mu=\nu+(1-q)\epsilon$ and keeping only the terms of order $\epsilon$. For completeness, we will give a direct proof as well. Theorem 1 in \cite{povolotsky2013integrability} states that an associative algebra generated by $A,B$ obeying the quadratic homogeneous relation
\begin{equation}
 BA= \alpha AA + \beta AB + \gamma BB,
 \label{eq:quadraticrelation}
\end{equation}
enjoys the following non-commutative analogue of Newton binomial expansion
$$ \left(\frac{\mu-\nu}{1-\nu} A + \frac{1-\mu }{1-\nu} B \right)^m  = \sum_{j=0}^m \varphi_{q, \mu, \nu}(j\vert m)A^j B^{m-j}.$$
Let $\mu=\nu  + (1-q)\epsilon $ and consider only the terms of order $\epsilon$ as $\epsilon \to 0$ in the above expression. By identification of $O(\epsilon)$ terms, we have
\begin{equation}
 \sum_{i=1}^m \frac{1-q}{1-\nu} B^{i-1}  A B^{m-i} = \sum_{j=1}^{m} R^{-1}\phi^R_{q,\nu}(j|m) A^{j} B^{m-j}.
 \label{eq:binomialdegenerated}
\end{equation}
Interpreting each monomial of the form $X_1 X_2\dots X_m$ with $X_i\in\lbrace A,B \rbrace$ as $f(n_1, \dots, n_m)$ where $n_i=n$ if $X_i=B$ and $n_i=n-1$ if $X_i=A$, the boundary condition in the statement of the Lemma corresponds algebraically to the quadratic relation (\ref{eq:quadraticrelation}). Thus we find that for $\vec{n} = (n, \dots, n)$, $f$ satisfies
$$
\sum_{i=1}^{m} R\frac{1-q}{1-\nu}\left(f(\vec{n}_{i}^{-}) - f(\vec{n})\right) = \sum_{j=1}^{m} \phi^R_{q,\nu}(j|m) f(\underbrace{n,\ldots,n}_{m-j},\underbrace{n-1,\ldots, n-1}_{j}).
$$
Since (\ref{eq:binomialdegenerated}) is true as an identity in an algebra over the field of rational fractions in $q$ and $\nu$,  we can certainly replace $q$ and $\nu$ by their inverses. Keeping in mind (\ref{eq:inversionrates}), we find that
\begin{equation}
 \sum_{i=1}^m \frac{1-q^{-1}}{1-\nu^{-1}} B^{i-1}  A B^{m-i} = \frac{\nu}{q}\sum_{j=1}^{m} L^{-1}\phi^L_{q,\nu}(j|m) A^{j} B^{m-j}.
 \label{eq:binomialdegeneratedinversed}
\end{equation}
Interpreting the monomials as $f(n_1, \dots, n_m)$ with $n_i=n$ or $n+1$, we get that
$$
\sum_{i=1}^{m} L\frac{1-q}{1-\nu}\left(f(\vec{n}_{i}^{+}) - f(\vec{n})\right) = \sum_{j=1}^{m} \phi^L_{q,\nu}(j|m) f(\underbrace{n+1,\ldots, n+1}_{j},\underbrace{n,\ldots, n}_{m-j}).
$$
\end{proof}
The application of Lemma \ref{lem:mbody} for each cluster  of equal elements in $\vec{n}$  shows that under conditions (1) and (2), $u(t; \vec{n})$ satisfies condition (1) of Definition \ref{def:trueevol}
$$  \frac{\mathrm{d}}{\mathrm{d}t} h(t, \vec{n}) = B_{q, \nu} h(t, \vec{n}).$$
The growth condition (3) of the true evolution equation is exactly the same as  condition (4) of the Proposition with the same constants $c, C$, and
the initial data are the same.
Hence, if $u$ satisfies the conditions of the Proposition, it solves the true evolution equation with initial data $h_0(\vec{y})=H(\vec{x}, \vec{y})$, and by Proposition \ref{prop:uniqueness},  $u(t;\vec{n}) = \mathbb{E}\left[\prod_{i=1}^{\infty}q^{x_{n_i}(t)+n_i} \right]$.
\end{proof}
\begin{remark}
In the case $\nu=q$, the system of  ODEs with two-body boundary conditions in Proposition \ref{prop:systemode} was already known, see (10) and (12) in \cite{sasamoto1998one}.
\end{remark}

Proposition \ref{prop:nestedcontours} provides an exact contour integral formula for the observables  $\mathbb{E}\left[\prod_{i=1}^{k}q^{x_{n_i}(t)+n_i} \right]$. We simply check that the formula is a solution to the true evolution equation, using Proposition \ref{prop:systemode}. This type of formula arises in the theory of Macdonald processes  \cite{borodin2014macdonald} (though the $q$-Hahn processes do not fit in that framework) and in Bethe ansatz \cite{borodin2012duality, borodin2014spectral}.
\begin{proposition} Fix $q\in (0,1)$, $0\leqslant  \nu <1$, and an integer $k$. Consider the $q$-Hahn AEP started from step initial data (i.e. $x_n(0)=-n$ for $n\geq 1$). Then for any $\vec{n}\in \Weyl{k}$,
\begin{multline}
\EE\left[\prod_{i=1}^{k} q^{x_{n_i}(t)+n_i}\right] = \frac{(-1)^k q^{\frac{k(k-1)}{2}}}{(2\pi i)^k} \oint_{\gamma_1} \cdots \oint_{\gamma_k} \prod_{1\leq A<B\leq k} \frac{z_A-z_B}{z_A-q z_B} \\
 \prod_{j=1}^{k} \left(\frac{1-\nu z_j}{1-z_j}\right)^{n_j} \exp\left((q-1)t \left(\frac{Rz_j}{1-\nu z_j} - \frac{Lz_j}{1-z_j} \right) \right) \frac{dz_j}{z_j(1-\nu z_j)}.
\label{eq:qmomentsgeneral}
\end{multline}
where the integration contours $\gamma_1,\ldots, \gamma_k$ are chosen so that they all contain $1$, $\gamma_A$ contains $q\gamma_B$ for $B>A$ and all contours exclude $0$ and $1/\nu$.
\label{prop:nestedcontours}
\end{proposition}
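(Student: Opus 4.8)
The plan is to verify that the right-hand side of \eqref{eq:qmomentsgeneral}, regarded as a function $u(t,\vec n)$ on $\R_+\times\Z^k$, satisfies the four hypotheses of Proposition \ref{prop:systemode}; then Proposition \ref{prop:systemode} (together with the uniqueness statement of Proposition \ref{prop:uniqueness}) identifies $u(t,\vec n)$ with $\EE\big[\prod_{i=1}^k q^{x_{n_i}(t)+n_i}\big]$ for $\vec n\in\Weyl k$. First I would observe that for every $\vec n\in\Z^k$ the nested integral converges and is smooth in $t$: on each contour $\gamma_j$ the only singularity of $z_j\mapsto\big(\tfrac{1-\nu z_j}{1-z_j}\big)^{n_j}\tfrac1{z_j(1-\nu z_j)}$ that lies on or inside $\gamma_j$ is the point $z_j=1$, while $0$ and $1/\nu$ are excluded; hence one may differentiate in $t$ and shift the $n_j$ freely under the integral sign, and conditions (1)--(2) of Proposition \ref{prop:systemode} make sense on all of $\Z^k$.

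For the free evolution equation (condition (1)) I would differentiate under the integral. Since the Cauchy-type kernel $\prod_{A<B}\tfrac{z_A-z_B}{z_A-qz_B}$ carries no dependence on $t$ or $\vec n$, $\partial_t$ multiplies the integrand by $\sum_{j=1}^k(q-1)\big(\tfrac{Rz_j}{1-\nu z_j}-\tfrac{Lz_j}{1-z_j}\big)$, whereas replacing $n_i$ by $n_i\mp1$ multiplies the $z_i$-part by $\big(\tfrac{1-z_i}{1-\nu z_i}\big)^{\pm1}$, so that the right-hand side of the free evolution equation corresponds to multiplying the integrand by $\tfrac{1-q}{1-\nu}\sum_i\big[R\big(\tfrac{1-z_i}{1-\nu z_i}-1\big)+L\big(\tfrac{1-\nu z_i}{1-z_i}-1\big)\big]$; the elementary identities $\tfrac{1-z}{1-\nu z}-1=-\tfrac{(1-\nu)z}{1-\nu z}$ and $\tfrac{1-\nu z}{1-z}-1=\tfrac{(1-\nu)z}{1-z}$ show the two multipliers coincide pointwise in the $z_j$, giving (1). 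The growth bound (condition (4)) I would obtain by bringing absolute values inside the integral: on the compact set $\gamma_1\times\cdots\times\gamma_k\times[0,T]$ the exponential factor is bounded, and $\big|\tfrac{1-\nu z_j}{1-z_j}\big|^{n_j}\le M^{n_j}$ with $M=\max_j\max_{z\in\gamma_j}\big|\tfrac{1-\nu z}{1-z}\big|$, so $|u(t,\vec n)|\le Ce^{c\Vert\vec n\Vert}$ with $c=\log\max(1,M)$, and the corresponding modulus-of-continuity estimate follows the same way.

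The two-body boundary condition (condition (2)) is where the precise values of $\alpha,\beta,\gamma$ are used. Fix $\vec n$ with $n_i=n_{i+1}$ and write $f(z)=\tfrac{1-z}{1-\nu z}$; then $u(t,\vec n_{i,i+1}^-)$, $u(t,\vec n_{i+1}^-)$, $u(t,\vec n)$, $u(t,\vec n_i^-)$ correspond to multiplying the integrand by $f(z_i)f(z_{i+1})$, $f(z_{i+1})$, $1$, $f(z_i)$. I would check, by clearing denominators (the constant term vanishing because $\alpha+\beta+\gamma=1$, the $z_iz_{i+1}$ term vanishing after substituting the values of $\alpha,\beta,\gamma$), the key identity
\[
\alpha f(z_i)f(z_{i+1})+\beta f(z_{i+1})+\gamma-f(z_i)=\frac{(1-\nu)^2}{1-q\nu}\,\frac{z_i-q z_{i+1}}{(1-\nu z_i)(1-\nu z_{i+1})}.
\]
The numerator factor $z_i-qz_{i+1}$ cancels the pole of $\tfrac{z_i-z_{i+1}}{z_i-qz_{i+1}}$, so the integrand of $\alpha u(t,\vec n_{i,i+1}^-)+\beta u(t,\vec n_{i+1}^-)+\gamma u(t,\vec n)-u(t,\vec n_i^-)$ becomes $\tfrac{(1-\nu)^2}{1-q\nu}\,\tfrac{z_i-z_{i+1}}{(1-\nu z_i)(1-\nu z_{i+1})}$ times a factor that, because $n_i=n_{i+1}$, is symmetric under $z_i\leftrightarrow z_{i+1}$ (the pairwise factors involving a third variable $z_A$ pair up, and the single-variable pieces coincide). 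With the pole at $z_i=qz_{i+1}$ removed, one can collapse $\gamma_i$ and $\gamma_{i+1}$ onto a common contour without crossing any singularity of the $z_i$-integrand — the remaining candidates $z_i\in\{0,1,1/\nu\}$ and $z_i\in\{qz_A,\,z_A/q\}$ with $A\ne i,i+1$ lie inside both or outside both of $\gamma_i,\gamma_{i+1}$ by the nesting hypothesis — whereupon the integrand is antisymmetric in $(z_i,z_{i+1})$ and the double integral vanishes.

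The initial data (condition (3)) is what I expect to be the main obstacle. For step data one has $q^{x_{n_i}(0)+n_i}=1$ if $n_i\ge1$ and $q^{x_0(0)}=0$ if $n_i=0$, so I must show that the $t=0$ specialization of the nested integral equals $\prod_{i=1}^k\mathds{1}_{n_i\ge1}$ on $\Weyl k$. I would argue by induction on $k$, integrating out the innermost variable $z_k$: the nesting $\gamma_A\supset q\gamma_k$ pushes the would-be poles $z_k=z_A/q$ outside $\gamma_k$, so the only relevant singularity is $z_k=1$, which is present iff $n_k\ge1$; if $n_k=0$ the $z_k$-integral is $0$, matching $\mathds{1}_{n_k\ge1}=0$, and if $n_k\ge1$ one evaluates the residue at $z_k=1$ and checks that what remains is $\tfrac{(-1)^{k-1}q^{\binom{k-1}{2}}}{(2\pi i)^{k-1}}$ times a $(k-1)$-fold integral of the same shape (now with the extra pole at $z_j=q$, produced by the residue, enclosed by $\gamma_1$), to which the induction hypothesis applies once the prefactors are matched. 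This residue bookkeeping is routine in spirit but is the one place where real care is needed; it is the exact analogue of the initial-data verification for the $q$-Hahn TASEP carried out in \cite{corwin2014q}. Once (1)--(4) are in hand, Proposition \ref{prop:systemode} yields the claimed formula.
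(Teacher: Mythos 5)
Your overall strategy — verify conditions (1)–(4) of Proposition \ref{prop:systemode} and invoke the uniqueness statement — is exactly the paper's. Your treatment of the free evolution equation (condition (1)), the growth estimate (condition (4)), and the two-body boundary condition (condition (2)) all match the paper in substance, and the algebraic identity
\[
\alpha f(z_i)f(z_{i+1})+\beta f(z_{i+1})+\gamma-f(z_i)=\frac{(1-\nu)^2\,(z_i-q z_{i+1})}{(1-q\nu)(1-\nu z_i)(1-\nu z_{i+1})}
\]
with $f(z)=\tfrac{1-z}{1-\nu z}$ is precisely the factor the paper quotes; your subsequent contour-collapse and antisymmetry argument is also correct and is the argument the paper gives (citing \cite{corwin2014q}).

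Where you diverge from the paper, and where there is a genuine gap, is the initial-data verification (condition (3)). The paper's computation, in the case all $n_i\geqslant 1$, is to expand the \emph{outermost} contour $\gamma_1$ to infinity. After simplifying $\bigl(\tfrac{1-\nu z_1}{1-z_1}\bigr)^{n_1}\tfrac{1}{z_1(1-\nu z_1)}=\tfrac{(1-\nu z_1)^{n_1-1}}{(1-z_1)^{n_1}z_1}$ one sees the $z_1$-integrand decays like $O(1/z_1^2)$ and the only pole crossed when inflating $\gamma_1$ is the \emph{simple} pole at $z_1=0$, whose residue $\prod_{B>1}\tfrac{-z_B}{-qz_B}=q^{-(k-1)}$ is a constant independent of $z_2,\ldots,z_k$. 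Hence $\oint_{\gamma_1}=-2\pi i\, q^{1-k}$ factors out cleanly, and iterating gives $\prod_{j=1}^k(-2\pi i\,q^{j-k})$, which cancels the prefactor $\tfrac{(-1)^k q^{k(k-1)/2}}{(2\pi i)^k}$ to give exactly $1$. The key features are: the relevant pole is always simple, and its residue is a constant, so the recursion closes trivially.

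Your proposal instead integrates out the \emph{innermost} variable $z_k$ and takes the residue at $z_k=1$. This has two problems. First, the pole at $z_k=1$ has order $n_k$, which on $\Weyl{k}$ can be any positive integer, so the residue is an $(n_k-1)$-fold derivative producing a sum of terms, not a single clean expression. Second, and more fundamentally, even when $n_k=1$ the residue at $z_k=1$ introduces the factor $\prod_{A<k}\tfrac{z_A-1}{z_A-q}$ into the remaining $(k-1)$-fold integrand; this is \emph{not} of the same form as the original integrand (it cannot be absorbed into the $\bigl(\tfrac{1-\nu z_A}{1-z_A}\bigr)^{n_A}$ or the Cauchy product), so the claimed recursion "to a $(k-1)$-fold integral of the same shape" does not close as stated. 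A direct check for $k=2$ shows your computation does eventually give $1$, but only because one ends up invoking the very residue-at-$0$/expand-to-infinity step that the paper uses from the start — which is also the calculation \cite{corwin2014q} actually performs. You should replace your initial-data argument by the paper's: simplify away the pole at $1/\nu$, observe $O(1/z_1^2)$ decay, and peel off the outer contour by the (always simple, always constant) residue at $z_1=0$.
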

\begin{proof}
We prove that the right-hand-side of (\ref{eq:qmomentsgeneral}) verifies the conditions of Proposition \ref{prop:systemode}. Note that (\ref{eq:qmomentsgeneral}) is very similar with the result of Theorem 1.9 in \cite{corwin2014q} for the discrete-time $q$-Hahn TASEP, the only difference being that the factor $((1-\mu z_j)/(1-\nu z_j))^t$ is replaced by
$$\exp\left((q-1)t \left(\frac{Rz_j}{1-\nu z_j} - \frac{Lz_j}{1-z_j} \right) \right).  $$
Let us explain briefly why conditions (2) and (3) are verified: As it is explained in the proof of Theorem 1.9 in \cite{corwin2014q}, the application of the boundary condition to the integrand brings out an additional factor
$$ \frac{(1-\nu)^2}{(1-q\nu)(1-\nu z_i)(1-\nu z_{i+1})}(z_i - qz_{i+1}).$$
The factor $(z_i - qz_{i+1})$ cancels out the pole separating the contours for the variables $z_i$ and $z_{i+1}$. We may then take the same contour and use antisymmetry to prove that the integral is zero. To check the initial data, one may observe by residue calculus that the integral is zero when $n_k\leq 0$ since there is no pole at $1$ for the $z_k$ integral; and one verifies that the integral equals $1$ in the alternative case by sending the contours to infinity (this is the same calculation as in \cite{corwin2014q}).

Let us check the free evolution equation. The generator of the free evolution equation can be written as a sum $\sum_{i=1}^{k} \mathcal{L}_i$  where $\mathcal{L}_i$ acts by
$$\mathcal{L}_i f = \frac{1-q}{1-\nu} \left[R\left(f(\vec{n}^{-}_i)-f(\vec{n}) \right) +L \left(f(\vec{n}^{-}_i)-f(\vec{n}) \right)\right].$$
 Applying $ \mathcal{L}_i $ to the R.H.S of (\ref{eq:qmomentsgeneral}) brings inside the integration a factor
\begin{equation*}
\frac{1-q}{1-\nu} \left( R \left(\frac{1-z_i}{1-\nu z_i} -1\right) + L \left(\frac{1- \nu z_i}{1- z_i} -1\right) \right)
\end{equation*}
which is readily shown to equal the argument of the exponential.

Finally, let us check the growth condition. Let us denote by $\tilde{u}(t, \vec{n})$ the right-hand-side of (\ref{eq:qmomentsgeneral}). One can choose the contours $\gamma_1, \dots, \gamma_k$ such that for all $1\leqslant A<B\leqslant k $ and $1\leqslant j \leqslant k$, $\vert z_A- q z_B\vert$, $\vert 1-z_j\vert $, $\vert 1-\nu z_j\vert $ and $\vert z_j\vert $ are uniformly bounded away from zero. Since the contours are finite, one can find constants $c_1$, $c_2$ and $c_3$,  such that for any $t$ smaller that some arbitrary but fixed constant $T$,
\begin{equation*}
\left| \tilde{u}(t, \vec{n})\right| \leqslant c_1 \prod_{j=1}^k \left( c_2^{n_j} \exp((1-q)t c_3)\right),
\end{equation*}
and
\begin{equation*}
\vert \tilde{u}(t, \vec{n})- \tilde{u}(t, \vec{n}')\vert \leqslant c_1 \exp(k(1-q)t c_3)\vert c_2^{\Vert\vec{n} \Vert} - c_2^{\Vert\vec{n}' \Vert}\vert ,
\end{equation*}
where $c_1$, $c_2$ and $c_3$ depend only on the parameters $q, \nu$, the choice of contours and the horizon time $T$.
\end{proof}

\subsection{Fredholm determinant formulas}
Proposition \ref{prop:nestedcontours} provides a formula for all integer moments of the random variable $q^{x_n(t)+n}$ when the $q$-Hahn AEP is started from step initial condition. Since $q\in (0,1)$ and $x_n(t)+n\geq 0$, this completely characterizes the law of $x_n(t)$. In order to extract information out of these expressions, we give a Fredholm determinant formula for the $e_q$-Laplace transform of $q^{x_n(t)+n}$, following an approach designed initially for the study of Macdonald processes \cite{borodin2014macdonald}. The reader is referred to  \cite[Section 3.22]{borodin2014macdonald} for some background on Fredholm determinants.  In the totally asymmetric case ($L=0$), Theorem \ref{th:fredholmgeneral} can also be seen as a degeneration when $\epsilon$ goes to zero of Theorem 1.10 in \cite{corwin2014q}.

\begin{theorem}
Fix $q\in (0,1)$ and $0\leqslant  \nu <1$. Consider step initial data. Then for all $\zeta\in\C\setminus \R_+$, we have the ``Mellin-Barnes-type'' Fredholm determinant formula
\begin{equation}\label{eq:MellinBarnes}
\EE \left[\frac{1}{\left(\zeta q^{x_{n}(t)+n};q\right)_{\infty}}\right] = \det\left(I + K_{\zeta}\right)
\end{equation}
where $\det\left(I + K_{\zeta}\right)$ is the Fredholm determinant of $K_\zeta: L^2(C_1)\to L^2(C_1)$ for $C_1$ a positively oriented circle containing $1$ with small enough radius so as to not contain $0$, $1/q$ and $1/\nu$. The operator $K_\zeta$ is defined in terms of its integral kernel
\begin{equation*}
K_{\zeta}(w,w') = \frac{1}{2\pi i} \int_{-i \infty + 1/2}^{i\infty +1/2} \frac{\pi}{\sin(-\pi s)} (-\zeta)^s \frac{g(w)}{g(q^s w)} \frac{1}{q^s w - w'} ds
\end{equation*}
with
\begin{equation*}
g(w) = \left(\frac{(\nu w;q)_{\infty}}{(w;q)_{\infty}}\right)^{n}
\exp\left( (q-1)t \sum_{k=0}^{\infty} \frac{R}{\nu}\frac{\nu w q^k}{1-\nu w q^k} -L\frac{w q^k}{1- w q^k}\right)
 \frac{1}{(\nu w;q)_{\infty}}.
\end{equation*}

The following ``Cauchy-type'' formula also holds:
\begin{equation}
\EE \left[\frac{1}{\left(\zeta q^{x_{n}(t)+n};q\right)_{\infty}}\right] = \frac{\det\left(I+\zeta \tilde{K}\right)}{(\zeta ; q)_{\infty}},
\label{eq:Cauchytype}
\end{equation}
where $\det\left(I+\zeta \tilde{K}\right)$ is the Fredholm determinant of $\zeta$ times the operator $\tilde{K}:\mathbb{L}^2(C_{0,1})\to \mathbb{L}^2(C_{0,1})$ for $C_{0,1}$ a positively oriented circle containing $0$ and $1$ but not $1/\nu$, and the operator $\tilde{K}$ is defined by its integral kernel
$$ \tilde{K}(w,w')  = \frac{g(w)/g(qw)}{qw'-w}.$$
\label{th:fredholmgeneral}
\end{theorem}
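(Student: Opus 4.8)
The plan is to obtain the law of $x_n(t)$ from the integer $q$-moments of Proposition \ref{prop:nestedcontours} by the ``moments to Fredholm determinant'' procedure of \cite{borodin2014macdonald} (as carried out e.g. in \cite{borodin2012free, corwin2014q}). The first step is to specialize Proposition \ref{prop:nestedcontours} to $\vec n = (n,n,\dots,n)$, which gives a nested contour integral for $\mu_k := \EE\!\left[q^{k(x_n(t)+n)}\right]$. A short computation identifies the single-variable factor in \eqref{eq:qmomentsgeneral} with $g(z)/g(qz)$: using $(\nu w;q)_\infty = (1-\nu w)(\nu qw;q)_\infty$, $(w;q)_\infty=(1-w)(qw;q)_\infty$, and the telescoping of the exponential sum defining $g$ (only the $k=0$ term survives upon dividing $g(w)$ by $g(qw)$), one gets
\begin{equation*}
\frac{g(w)}{g(qw)} = \left(\frac{1-\nu w}{1-w}\right)^{n}\exp\!\left((q-1)t\left(\frac{Rw}{1-\nu w}-\frac{Lw}{1-w}\right)\right)\frac{1}{1-\nu w},
\end{equation*}
which is precisely the factor multiplying $dz_j/z_j$ in \eqref{eq:qmomentsgeneral}; hence
\begin{equation*}
\mu_k = \frac{(-1)^k q^{k(k-1)/2}}{(2\pi i)^k}\oint_{\gamma_1}\!\!\cdots\!\oint_{\gamma_k}\prod_{1\le A<B\le k}\frac{z_A-z_B}{z_A-qz_B}\ \prod_{j=1}^k \frac{g(z_j)}{g(qz_j)}\,\frac{dz_j}{z_j}
\end{equation*}
with the nested contours of Proposition \ref{prop:nestedcontours}.

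Next I would pass from moments to the $e_q$-Laplace transform. Since $x_n(t)+n\ge0$ and $q\in(0,1)$, we have $q^{x_n(t)+n}\in(0,1]$, so $0\le\mu_k\le1$; the $q$-binomial theorem \eqref{eq:qbinomial} with $a=0$ then yields, for $|\zeta|$ small, $\EE\!\left[1/(\zeta q^{x_n(t)+n};q)_\infty\right]=\sum_{k\ge0}\zeta^k\mu_k/(q;q)_k$, the interchange of sum and expectation being justified by dominated convergence. Both sides are analytic in $\zeta$ on $\C\setminus\R_+$: the left side because $\zeta\mapsto1/(\zeta q^{m};q)_\infty$ is locally bounded on $\C\setminus\R_+$ uniformly in $m\ge0$ (its only singularities lie at $\zeta\in\{q^{-j}:j\ge0\}\subset\R_+$), and the right side, once identified with the Fredholm determinants, because the kernels $K_\zeta$ and $\zeta\tilde K$ are manifestly analytic in $\zeta$ off $\R_+$. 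So it suffices to prove \eqref{eq:MellinBarnes} and \eqref{eq:Cauchytype} for small $|\zeta|$ and then continue analytically.

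The core step is the rewriting of $\sum_{k\ge0}\zeta^k\mu_k/(q;q)_k$ as a Fredholm determinant, done exactly as in \cite[Section 3.2]{borodin2014macdonald} (and \cite{corwin2014q} for the totally asymmetric case). For \eqref{eq:MellinBarnes}: deform the nested contours $\gamma_1,\dots,\gamma_k$ onto a single small positively oriented circle $C_1$ around $1$ avoiding $0,1/q,1/\nu$; the residues collected at the strings $z_A=qz_B$ regroup $\mu_k$ as a sum over compositions of $k$ into $q$-strings, and inserting this into the generating series and re-encoding the sum over string lengths via the residues of $\pi/\sin(-\pi s)$ at the positive integers produces the Fredholm expansion of $\det(I+K_\zeta)$ with the stated Mellin--Barnes kernel. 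For \eqref{eq:Cauchytype}: keep all $k$ variables on a common contour $C_{0,1}$ around $0$ and $1$, symmetrize $\prod_{A<B}\frac{z_A-z_B}{z_A-qz_B}$ over $S_k$ to recognize the Cauchy determinant $\det\!\big[1/(qz_i-z_j)\big]_{i,j=1}^k=\det\!\big[\tilde K(z_i,z_j)\big]_{i,j=1}^k$ up to the $g$-factors, and sum against $\zeta^k/(q;q)_k$; the prefactor $1/(\zeta;q)_\infty$ comes out of \eqref{eq:qbinomial}. Equivalently, \eqref{eq:Cauchytype} follows from \eqref{eq:MellinBarnes} by the standard Mellin--Barnes $\leftrightarrow$ Cauchy equivalence, and when $L=0$ the whole argument is simply the $\e\to0$ degeneration of \cite[Theorem 1.10]{corwin2014q}.

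I expect the main obstacle to be the technical bookkeeping of the contour deformations and the analyticity/decay hypotheses needed for the lemmas of \cite{borodin2014macdonald}. One must locate the singularities of $g$ (a pole of order $n$ at $w=1$ coming from $(1-w)^{-n}$, together with essential singularities at $w=1$ and $w=1/\nu$ from the exponential and from $1/(\nu w;q)_\infty$), check that $C_1$ and $C_{0,1}$ can indeed be placed as required, choose the Mellin--Barnes line $\Real s=1/2$ so that $q^s w$ stays away from these singularities and from $w'$, and verify that $K_\zeta$ and $\zeta\tilde K$ are trace class so that the Fredholm determinants and their series expansions are legitimate. These checks are routine given the explicit form of $g$, but they must be carried out carefully, in particular controlling the $s$-integral defining $K_\zeta$ through the exponential decay of $\pi/\sin(-\pi s)$ along the vertical line.
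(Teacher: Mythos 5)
Your proposal is correct and follows essentially the same route as the paper: specialize Proposition \ref{prop:nestedcontours} to $\vec{n}=(n,\dots,n)$, identify the one-variable factor in the nested contour integrand as $g(z)/g(qz)$, invoke the standard ``moments to Fredholm determinant'' machinery of \cite{borodin2014macdonald,borodin2012duality,corwin2014q} for $|\zeta|$ small, identify the generating series with the $e_q$-Laplace transform via the $q$-binomial theorem, and then analytically continue to $\C\setminus\R_+$. The paper's proof is equally a sketch that defers to \cite[Props.\ 3.2--3.6]{borodin2012duality}; its only additional content is the explicit verification of the key hypothesis of that machinery, namely that $|g(w)/g(wq^s)|$ stays bounded for $w\in C_1$ and $s$ on the relevant contours $D_{R,d,k}$ (by choosing $C_1$ small and noting the singularities of $g$ sit at $q^{-m}$ and $\nu^{-1}q^{-m}$, $m\geqslant0$), which you flag as the ``main obstacle'' but do not carry out; conversely your note that $x_n(t)+n\geqslant 0$ (hence $0\leqslant\mu_k\leqslant1$) is implicit but unstated in the paper and is a useful point to make explicit.
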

\begin{proof}
We will sketch the main deductions which occur in the proof of the Mellin-Barnes type formula (\ref{eq:MellinBarnes}). Similar derivations (with all details given) of such Fredholm determinants from moment formulas can be found in \cite[Theorem 3.18]{borodin2014macdonald}, \cite[Theorem 1.1]{borodin2012duality} or more recently \cite[Theorem 1.10]{corwin2014q} and the proofs always follow the same general scheme (cf. \cite[Section 3.1]{borodin2012duality}). Propositions 3.2 to 3.6 in \cite{borodin2012duality} show that for $\vert \zeta \vert $ small enough and $C_1$ a positively oriented circle containing $1$ with small enough radius,
\begin{equation}
\sum_{k=0}^{\infty} \EE\left[ q^{k(x_n(t)+n)}\right] \frac{\zeta^k}{\left[k\right]_q!} = \det\left(I + K_{\zeta}\right),
\end{equation}
with $\left[k\right]_q!$ as in (\ref{eq:defqfactorial}). The only technical condition to verify is that 
$$ \sup\left\lbrace  \vert g(w)/g(wq^s)\vert  :\   w\in C_1, \ k\in \mathbb{Z}_{> 0}, \  s\in D_{R, d, k}\right\rbrace  <\infty.$$
 Here, $D_{R, d, k}$ is the contour depicted in \cite[Figure 3]{borodin2012duality}. Note that here $R$ is not the asymmetry parameter of the process but the radius of the circular part of the contour $D_{R, d, k}$. If one chooses $R$ large enough, $d$ small enough, and the radius of $C_1$ small enough, then $q^s w$ stay in a neighbourhood of the segment $[0,\sqrt{d}]$. The function $g$ has singularities at $q^{-n}$ and $\nu^{-1}q^{-n}$ for all $n\in \Z_{\geqslant 0}$. Hence for $w \in C_1$ a small but fixed circle around $1$, one can choose $R$ and $d$ such that  $q^s w$ stay in a compact region of the complex plane away from all singularities, and thus the ratio $\vert g(w)/g(wq^s)\vert$ remains bounded.

 By an application of the $q$-binomial theorem (\ref{eq:qbinomial}), for $\vert \zeta \vert <1 $ we also have that
$$ \sum_{k=0}^{\infty} \EE\left[ q^{k(x_n(t)+n)}\right] \frac{\zeta^k}{\left[k\right]_q!} = \EE\left[ \frac{1}{(\zeta q^{x_n(t)+n} ; q)_{\infty}}\right],$$
proving that (\ref{eq:MellinBarnes}) holds for $\vert \zeta \vert$ sufficiently small. Both sides of (\ref{eq:MellinBarnes}) can be seen to be analytic over $\mathbb{C}\setminus \R_+$. The left-hand side equals
$$ \sum_{k=0}^{\infty} \frac{\PP\big(x_n(t)+n = k\big)}{(1-\zeta q^k)(1-\zeta q^{k+1})\cdots}.$$
For any $\zeta\in\mathbb{C}\setminus \R_+$ the infinite products are uniformly convergent and bounded away from zero on a neighbourhood of $\zeta$, which implies that the series is analytic. The  right-hand side of (\ref{eq:MellinBarnes}) is
$$ \det\left(I + K_{\zeta}\right)= 1 = \sum_{n=1}^{\infty} \frac{1}{n!} \int_{C_1}\mathrm{d}w_1 \dots \int_{C_1}\mathrm{d}w_n \det\left(K_{\zeta}(w_i, w_j) \right)_{i,j=1}^n.$$
Due to exponential decay in $\vert s\vert$ in the integrand of $K_{\zeta}$, $\det\left(K_{\zeta}(w_i, w_j) \right)_{i,j=1}^n$ is analytic in $\zeta$ for all $w_1, \dots, w_n \in C_1$. Analyticity of the Fredholm expansion proceeds from absolute and uniform convergence of the series on a neighbourhood of $\zeta\not\in \R_+$. This can be shown using that $\vert g(w)/g(wq^s)\vert < const$ for $w\in C_1$ and $s\in 1/2+i\R$ and Hadamard's bound to control the determinant.

We do not prove explicitly the Cauchy-type Fredholm determinants but refer to the Section 3.2 in \cite{borodin2012duality} where a general scheme is explained to prove such formulas.
\end{proof}

\subsection{Some degenerations of the $q$-Hahn AEP}
\label{subsec:degenerations}

\subsubsection{Partially asymmetric generalizations of the $q$-TASEP} \label{subsubsec:qTASEP}

The limits of the $q$-Hahn weights when $\nu$ goes to zero and when $\epsilon = (\mu-\nu)/(1-q)$ goes to zero do not commute, thus several choices are possible in order to build continuous time, partially asymmetric versions of  the $q$-TASEP and the $q$-Boson process (see, e.g. \cite{borodin2012duality}). We investigate here
the case  when we first take $\epsilon$ to zero. This  corresponds to taking $\nu= 0$ in the rates $\phi^R_{q, \nu}$ and $\phi^L_{q, \nu}$. We have
\begin{equation}
\phi^R_{q, 0}(j\vert m) = R\left(1-q^m \right)\mathds{1}_{\lbrace j=1\rbrace} \qquad\text{ and }\qquad \phi^L_{q, 0}(j\vert m) = \frac{L}{[j]_q} \frac{(q;q)_m}{(q;q)_{m-j}}.
\end{equation}
 In the associated exclusion process, independently for each $n\geqslant 1$, the particle at location $x_n(t)$ jumps to $x_n(t)+1$ at rate $R(1-q^{\rm gap})$ (the gap being here $x_{n-1}(t)-x_{n}(t) -1$), and jumps to the location $x_{n}-j$ at rate $\big(L/[j]_q\big)\big( (q;q)_{\rm gap}/(q;q)_{{\rm gap}-j}\big)$, for all $j\in \lbrace 1, \dots , x_{n}-x_{n+1}-1\rbrace$ (the gap being here $x_{n}(t)-x_{n+1}(t) -1$). All the result in Section \ref{sec:continoustime} apply for the case $\nu=0$, and one could study this system in more details by analyzing the Fredholm determinant formula of Theorem \ref{th:fredholmgeneral}. A motivation for studying this process is that as $q$ goes to $1$,
\begin{equation}
\phi^R(j\vert m)  \approx R (1-q) m \mathds{1}_{\lbrace j=1 \rbrace} \qquad\text{ and }\qquad
\phi^L(j \vert m)  \approx L(1-q)m \mathds{1}_{\lbrace j=1 \rbrace}. \label{eq:rateslimitqto1}
\end{equation}
Thus, the rates on the left and on the right have the same expression at the first order in $1-q$, and the limit of this process when $q\to 1$ may be interesting.

\begin{remark}
There are other partially asymmetric generalizations of the $q$-TASEP which preserve its duality. One possibility is to send first  $\nu$ to zero in the expressions for $\varphi_{q, \mu, \nu}(j\vert m )$ and $\varphi_{q^{-1}, \mu^{-1}, \nu^{-1}}(j\vert m )$, and then take a continuous time limit.
%
Another generalization preserving duality has jumps to the right at rate $(1-q)[\rm{gap}]_q$ and to the left at rate  $(q^{-1}-1)[\rm{gap}]_{q^{-1}}$.
It is not clear if these processes are Bethe ansatz solvable, so we do not discuss them further here.
\end{remark}


\subsubsection{Totally asymmetric case}

When $R=1$ and $L=0$, we are in the totally asymmetric case. This case was studied  by Takeyama in \cite{takeyama2014deformation}. Indeed, the particle system defined in \cite{takeyama2014deformation} is a zero-range   process defined on $\Z$ controlled by two parameters $s$ and $q$. Particles move from site $i$ to $i-1$ independently for each $i\in\Z$, and the rate at which $j$ particles move to the left from a site occupied by $m$ particles is given by
\begin{equation*}
\frac{s^{j-1}}{[j]_q}\prod_{i=0}^{j-1} \frac{[m-i]_q}{1+s[m-1-i]_q}.
\end{equation*}
Setting $s=(1-q)\frac{\nu}{1-\nu} $, we find that
\begin{equation*}
\frac{s^{j-1}}{[j]_q}\prod_{i=0}^{j-1} \frac{[m-i]_q}{1+s[m-1-i]_q}\  = \ \phi^R_{q, \nu}(j\vert m).
\end{equation*}
\begin{remark}
The totally asymmetric version of the $q$-Hahn AEP\footnote{Here we mean the degeneration of the $q$-Hahn AEP when $L=0$, which is a continuous time Markov process, hence different from the discrete-time $q$-Hahn TASEP.}, is also the natural continuous time limit of the (discrete-time) $q$-Hahn TASEP, and it was already noticed in \cite{povolotsky2013integrability} that letting $\mu\to \nu$ and rescaling time was the right way of defining such a continuous time limit.
\end{remark}

\subsubsection{Multiparticle asymmetric diffusion model}
\label{subsec:MADM}

When $\nu=q$, the jump rates of the $q$-Hahn AZRP and AEP no longer depend on the gap between consecutive particles (or the number of particles on each site in the zero-range formulation). The rates are now given by $R/[j]_{q^{-1}}$ and $L/[j]_q$. The zero-range model with $N$ particles is exactly  the ``multi-particle asymmetric diffusion model'' introduced by Sasamoto and Wadati\footnote{\cite{sasamoto1998one} defined the model with the restriction that $R/L=q$.} in \cite{sasamoto1998one} and further studied by Lee \cite{lee2012current} (see also \cite{alimohammadi1999two, alimohammadi1998exact}).
For the corresponding exclusion process, we prove (by an asymptotic analysis of the Fredholm determinant in (\ref{eq:MellinBarnes})) in Section \ref{sec:MADM} that the rescaled positions of particles converge to the Tracy-Widom GUE distribution (Theorem \ref{thm:fluctuationsrarefactionfan}). The same results even holds for the first particle (Theorem \ref{thm:fluctuationsparticle1}).

\subsubsection{Push-ASEP}

Consider the $q$-Hahn AEP when $\nu=0$ (see Section \ref{subsubsec:qTASEP}), and let further $q=0$.  The process obtained after particle-hole inversion is known. Indeed, when $\nu=q=0$, $\phi^R(j\vert m) = \mathds{1}_{j=1}$ and $\phi^L(j\vert m) = 1$ for all $m\geqslant 1$. This corresponds to the Push-ASEP introduced in \cite{borodin2008large}, wherein convergence to the Airy process is proved.

\section{Predictions from the KPZ scaling theory}
\label{sec:KPZscaling}

In this section, we explain how asymptotics of our Fredholm determinant formula (Theorem \ref{th:fredholmgeneral}) confirms the universality predictions from the physics literature KPZ scaling theory \cite{krug1992amplitude, spohn2012kpz}. Although the original paper \cite{krug1992amplitude} on the KPZ scaling theory deals only with so-called single step models and directed random  polymers, the predictions can be straightforwardly adapted to any exclusion process. In particular, we compute the non-universal constants arising in one-point limit theorems for the $q$-Hahn AEP. In Section \ref{sec:MADM}, we provide a rigorous confirmation in the particular case corresponding to the MADM exclusion process.

Following \cite{spohn2012kpz}, we present the predictions of KPZ scaling theory in the context of exclusion processes. Assume that the translation invariant stationary measures for an exclusion process on $\Z$ with local dynamics are precisely labelled by the density of particles $\rho$, where
$$ \rho = \lim_{n\to\infty} \frac{1}{2n+1}\#\lbrace \text{particles between } -n\text{ and }n\rbrace. $$
We define the average steady-state current $j(\rho)$ as the expected number of particles going from site $0$ to $1$ per unit time, for a system distributed according to the stationary measure indexed by $\rho$. We also define the integrated covariance $A(\rho)$ by
$$ A(\rho) = \sum_{j\in \mathbb{Z}} Cov(\eta_0, \eta_j), $$
where $\eta_0,  \eta_j\in \lbrace 0,1 \rbrace$ are the occupation variables of the exclusion system at sites $0$ and $j$, and the covariance is taken under the $\rho$-indexed stationary measure. One expects that the rescaled particle density $\varrho(x, \tau)$, given heuristically by
\begin{equation}
\varrho(x, \tau) = \lim_{\tau\to\infty} \mathbb{P}(\text{There is a particle at }\lfloor x t \rfloor \text{ at time }t\tau )
\label{eq:defvarrho}
\end{equation}
 satisfies the conservation equation (subject to being a weak solution that satisfies the entropy condition)
\begin{equation}
\frac{\partial }{\partial \tau}\varrho(x, \tau) + \frac{\partial}{\partial x}j(\varrho(x, \tau)) = 0,
\label{eq:conservationPDE}
\end{equation}
with some initial condition which is $\varrho(x, 0) = \mathds{1}_{x<0}$ for the step initial condition.

This hydrodynamic behaviour can also be phrased in terms of a law of large numbers for the position of particles. For $\kappa\geqslant 0$, if $n$ and $t$ go to infinity with $n=\lfloor \kappa t\rfloor$, there is a constant $\pi$ such that\footnote{Note that we do not prove this law of large numbers in terms of  almost-sure limit, our results only imply the convergence in probability}
\begin{equation}
\frac{x_n(t)}{t}\xrightarrow[t\to\infty]{} \pi.
\label{eq:LLNgeneral}
\end{equation}
It turns out that instead of expressing $\pi$ as a function of $\kappa$, it is more convenient to parametrize it by the local density $\rho$ around the macroscopic position $\pi$. Under the assumption that such a parametrization exists (it is the case starting from step initial condition), the definition of $\varrho$ in \eqref{eq:defvarrho} implies that  $\pi(\rho)$ is determined by $\rho=\varrho(\pi(\rho), 1)$. We parametrize $\kappa$ by $\rho$ as well and define $\kappa(\rho)$ such that the law of large numbers \eqref{eq:LLNgeneral} holds: Otherwise said, $\kappa(\rho)$ is the rescaled integrated current at the macroscopic position $\pi(\rho)$ (i.e. the limit as $t\to\infty $ of the number of particles sitting on the right of position $\pi(\rho)t$, divided by $t$).

\begin{KPZclass}
Let $\lambda(\rho) = -j''(\rho)$. For $\rho$ such that $\lambda(\rho) \neq 0$, the KPZ class conjecture states that starting from the step initial condition
\begin{equation}
\lim_{t\to\infty} \PP\left(\frac{x_{\lfloor \kappa(\rho)t \rfloor}(t)-t\pi(\rho)}{\sigma(\rho)t^{1/3}}\geqslant x\right) \xrightarrow[t\to\infty]{} F_{GUE}(-x),
\label{eq:KPZclassconjecture}
\end{equation}
where
\begin{eqnarray}
\pi(\rho) &=& \frac{\partial j(\rho)}{\partial \rho},\label{eq:generalpi}\\
\kappa(\rho) &=&  j(\rho) - \rho\pi(\rho),  \label{eq:generalkappa}\\
\sigma(\rho ) &=& \left(\frac{\lambda(\rho) \big(A(\rho)\big)^2}{2\rho^3} \right)^{1/3}.
\end{eqnarray}
The precise definition of $F_{GUE}$ is given in Definition \ref{def:TW}.
 \end{KPZclass}
The conjecture that fluctuations occur in the scale $t^{1/3}$ dates back to \cite{kardar1986dynamic}. The expression for the magnitude of fluctuations $\sigma(\rho)$ was derived in \cite{krug1992amplitude}, and the limiting distribution was first discovered in the work of Johannsson on TASEP \cite{johansson2000shape}.

Let us explain how the expressions for $\pi(\rho)$ and $\kappa(\rho)$ are heuristically  derived.
The existence of the limit \eqref{eq:defvarrho} implies that with the above definition of $\pi(\rho)$, we have
\begin{equation}
\forall t >0,\ \  \varrho(\pi(\rho)t, t)=\rho.
\label{eq:determinepi}
\end{equation}
Differentiating \eqref{eq:determinepi} with respect to $t$ and using \eqref{eq:conservationPDE}, we find that
$\pi(\rho) = \frac{\partial j(\rho)}{\partial \rho}$.

The rescaled current $\kappa(\rho)$ is the rescaled number of particles between the first particle and the position $\pi(\rho)t$.  Since integrating the density over space counts the number of particles, we can write that
\begin{equation}
\kappa(\rho)t = \int_{\pi(\rho)t}^{\pi(\rho_0)t} \varrho(x, t)\mathrm{d}x,
\label{eq:determinekappa1}
\end{equation}
where $\rho_0$ is the density around the first particle. As we have already explained in Section \ref{subsec:motivations}, $\rho_0$ may not be $0$ (we will see that $\rho_0>0$ for the $q$-Hahn AEP when $L>0$). Making the change of variables $x=\pi(\rho)t$ in \eqref{eq:determinekappa1}, we get that
$$  \kappa(\rho) = \int_{\rho}^{\rho_0} \rho\mathrm{d}\pi(\rho).$$
Integrating by parts and using \eqref{eq:generalpi} yields
\begin{equation}
\kappa(\rho) = \rho_0 \pi(\rho_0) - \rho \pi(\rho) +j(\rho) - j(\rho_0).
\label{eq:generalkappa1}
\end{equation}
Equation \eqref{eq:generalkappa1} is not satisfactory since $\rho_0$ is unknown. However, we can also determine $\kappa(\rho)$ by observing that we know $\kappa(\rho)$ at the left edge of the rarefaction fan. Since we start from step initial condition,
for any fixed $t$,  $x_N(t)= -N$ for $N$ large enough.  Hence, assuming that the density is continuous and equal $1$ at the left edge of the rarefaction fan, one has $\kappa(1)= -\pi(1)$.
\begin{equation}
\kappa(1)t - \kappa(\rho)t = \int_{\pi(1)t}^{\pi(\rho)t} \varrho(x, t)\mathrm{d}x.
\label{eq:determinekappa2}
\end{equation}
Again, by the change of variables $x=\pi(\rho)t$ and integrating by parts, in \eqref{eq:determinekappa2}, we find that
$$ \kappa(1)- \kappa(\rho) =\rho \pi(\rho)-  \pi(1) + j(1) - j(\rho),$$
and since $j(1)=0$ and $\kappa(1)= -\pi(1)$, we get that
\begin{equation}
\kappa(\rho)  = j(\rho) - \rho\pi(\rho)
 \label{eq:generalkappa2}
\end{equation}
as claimed in our statement of  the KPZ class Conjecture. Furthermore, by combining \eqref{eq:generalkappa1} and \eqref{eq:generalkappa2}, we get that $ j(\rho_0) = \rho_0 \pi(\rho_0)$. In other words,
\begin{equation}
 \frac{j(\rho_0)}{\rho_0}  = \frac{\partial j}{\partial \rho}(\rho_0),
 \label{eq:equationforrho_0}
\end{equation}
which means that $\rho_0$ is the argmax of the steady-state drift.

\begin{remark}
The magnitude of fluctuations in \cite[Equation (2.14)]{spohn2012kpz} slightly differs from our expression $\lambda(\rho) \left(A(\rho)\right)^2/(2\rho^3)$. This is because \cite{spohn2012kpz} considers fluctuations of the height function. The fluctuation of the height function is twice the fluctuations of the integrated current. And the fluctuations of the current are, on average, $\rho$ times the fluctuations of a tagged particle. Then, the quantities $j(\rho)$  and $A(\rho)$ defined in \cite{spohn2012kpz} differs from ours by a factor $2$ and $4$ respectively. Moreover, since we consider step-initial condition with particles on the left, it is more convenient to drop the minus sign. That is why the scale $\Big(-\frac{1}{2}\lambda(\rho)  A(\rho)^2 \Big)^{1/3}$ becomes $\Big(\lambda(\rho) A(\rho)^2/\big(2\rho^3\big)\Big)^{1/3}$.
\end{remark}
\subsection{Hydrodynamic limit}
\label{subsec:hydro}
In the case of the $q$-Hahn AEP, there exist translation invariant and stationary measures $\mu_{\alpha}$ indexed by a parameter $\alpha\in (0,1)$ such that the gaps between particles $(x_n-x_{n+1}-1)$ are independent  and identically distributed according to
\begin{equation}
\mu_{\alpha}({\rm gap}=m) =  \alpha^m \frac{(\nu ; q)_m}{(q ; q)_m} \frac{(\alpha ; q)_{\infty}}{(\alpha\nu ; q)_{\infty}}.
\label{eq:statmeasure}
\end{equation}
 Let us explain (without proof details) why these are stationary: It is known from a study of a more general family of zero-range processes on a ring \cite{evans2004factorized} that this measures are stationary for the (discrete time) $q$-Hahn TASEP on a ring (see also \cite{povolotsky2013integrability}). This implies that they are stationary as well in the infinite volume setting considered in \cite{corwin2014q}. By taking a limit of the transition matrix of the $q$-Hahn TASEP when $\mu$ goes to $\nu$, the measures $\mu_{\alpha}$ are also stationary for the totally asymmetric continuous-time case. Since the family of measures  $\mu_{\alpha}$ is stable by inversion of the parameters $q$ and $\nu$, they are also stationary in the two-sided case which is a linear combination of the one-sided ones.

Fix $q \in(0,1), \nu\in[0,1)$ and assume $L=1-R$, without loss of generality.
By the renewal theorem, the density $\rho$ under the stationary measure $\mu_{\alpha}$  is given by
\begin{equation}
\rho = \frac{1}{1+\EE[\rm gap]},
\end{equation}
where
\begin{eqnarray*}
\EE[\rm gap]&=& \sum_{m=0}^{\infty} m \alpha^m \frac{(\nu ; q)_m}{(q ; q)_m} \frac{(\alpha ; q)_{\infty}}{(\alpha\nu ; q)_{\infty}}, \\
&=&\alpha \frac{\rm d}{\rm{d}\alpha} \mathrm{log}\left(\frac{(\alpha\nu ; q)_{\infty}}{(\alpha ; q)_{\infty}}\right),\\
&=& \frac{1}{\log(q)} \left( \Psi_q(\theta) - \Psi_q(\theta + V)\right);
\end{eqnarray*}
with $\theta=\log_q(\alpha)$ and $V=\log_q(\nu)$.
Summarizing, for $\alpha=q^{\theta}$, we define the density parametrized by $\theta$ as
\begin{equation}
\rho(\theta) = \frac{\log(q)}{\log(q) + \Psi_q(\theta) - \Psi_q(\theta + V)}.
\label{eq:formularho}
\end{equation}
Let us compute the average steady-state current $j(\rho)$. By averaging the empirical current of particles over a large box under the stationary measure, we find that
$$ j(\rho) = \rho \ \cdot \EE[{\rm drift}],$$
where the drift is the average speed of a tagged particle.
 For $\rho$ corresponding to the parameter $\alpha$ (or
$\theta$) as above,  we have
\begin{eqnarray*}
j(\rho) &=&\rho\ \cdot \sum_{m=0}^{\infty} \alpha^m \frac{(\nu ; q)_m}{(q ; q)_m} \frac{(\alpha ; q)_{\infty}}{(\alpha\nu ; q)_{\infty}} \left(\sum_{j=1}^{m} j \phi^R_{q, \nu}(j\vert m) - \sum_{j'=1}^{m} j' \phi^L_{q, \nu}(j'\vert m)  \right), \\
&=& \rho \alpha \frac{\rm d}{\rm{d}\alpha}\left( \frac R \nu \ G_q(\alpha \nu) - L \ G_q(\alpha)\right),\ \ \ (\text{see Section \ref{sec:preliminaries} for the Def. of }G_q)\\
&=& \rho \frac{1-q}{\log(q)^2} \left(\frac R \nu \ \Psi_q'(\theta+V) - L \ \Psi_q'(\theta) \right) ;
\end{eqnarray*}
where we have used the $q$-binomial theorem (\ref{eq:qbinomial}) to sum over $m$ in the second equality and we have used Lemma \ref{lem:equivalentseries} for the third equality.
The functions $\pi$, $\kappa$ and $\sigma$ that arise in limit theorems (\ref{eq:LLNgeneral}) and (\ref{eq:KPZclassconjecture})
are written as  functions of the density $\rho$, but given the formula (\ref{eq:formularho}), one can express all quantities as functions of the $\theta$ variable. In the following, we compute the exact expression of these quantities for the $q$-Hahn AEP.  Since the dynamics depend on parameters $q$, $\nu$ and $R$ (we have assumed that $L=1-R$), the quantities $\pi$, $\kappa$ and $\sigma$ will be denoted $\pi_{q, \nu, R}(\theta)$, $\kappa_{q, \nu, R}(\theta)$ and $\sigma_{q, \nu, R}(\theta)$.

\subsubsection{Computation of $\pi_{q, \nu, R}(\theta)$.}\label{subsubsecpi}
Equation \eqref{eq:generalpi} from our statement of the KPZ class conjecture implies that
\begin{equation*}
\pi_{q, \nu, R}(\theta) =
\frac{\partial j(\rho(\theta) )}{\partial \theta} \Big\slash \frac{\partial \rho(\theta) }{\partial \theta},
\end{equation*}
which yields the formula
\begin{multline}
\pi_{q, \nu, R}(\theta) = \frac{1-q}{\log(q)^2}
\left[\frac R \nu \left(\Psi_q'(\theta+V) + \Psi_q''(\theta+V)\frac{\log q + \Psi_q(\theta)-\Psi_q(\theta+V)}{\Psi_q'(\theta+V)-\Psi_q'(\theta)} \right)\right.\\ \left. - L \left(\Psi_q'(\theta) + \Psi_q''(\theta)\frac{\log q + \Psi_q(\theta)-\Psi_q(\theta+V)}{\Psi_q'(\theta+V)-\Psi_q'(\theta)} \right) \right].
\label{eq:expressionforpi}
\end{multline}

\subsubsection{Computation of $\kappa_{q, \nu, R}(\theta)$}\label{subsubsec:kappa}   Equation \eqref{eq:generalkappa} from our statement of the KPZ class conjecture implies that
\begin{equation*}
\kappa(\theta)  = - \rho(\theta)\  \pi(\theta) + j(\rho)(\theta).
\end{equation*}
This yields the formula
\begin{equation}
\kappa_{q, \nu, R}(\theta) = \frac{1-q}{\log(q)}  \frac{\frac{R}{\nu}\  \Psi_q''(\theta+V) - L\ \Psi_q''(\theta)}{\Psi_q'(\theta)-\Psi_q'(\theta+V)}.
\label{eq:expressionforkappa}
\end{equation}
In order to make sense physically, the quantity $\kappa_{q, \nu, R}(\theta) $ must be positive, at least for
$\theta$ belonging to some interval $(\tilde{\theta}, +\infty)$.
 Since $\kappa_{q, \nu, R}(\theta) $ tends to $R-L$ when $\theta$ tends to infinity (equivalently $\alpha\to 0$), this requires that $R>L$ and suggests that the particles lie on a support of size $\mathcal{O}(\text{time})$ with high probability only if $R >L$.

Now assume that $R>L>0$. Then $\kappa_{q, \nu, R}(\theta) $ tends to $-\infty$ when $\theta$ tends to $0$. The local behaviour of particles around the first particles is described by the stationary measure $\mu_{\alpha_0}$,  where  $\alpha_0=q^{\theta_0}$ is such that  $\kappa_{q, \nu, R}(\theta_0)= 0$. If $R>L>0$, then $0<\theta_0< \infty$, which means that the density of particles $\rho_0$ is strictly positive around the first particle. In other words, the steady-state drift $j(\rho)/\rho$ is not decreasing and admits a maximum for some $\rho_0 >0$. Hence the density profile exhibit a discontinuity at the first particle, see Figure \ref{fig:densityprofileintro}. (Note that the curved section in Figure \ref{fig:densityprofileintro} is the parametric curve $(\pi_{q, \nu}(\theta), \rho(\theta))$ for $\theta\in(\theta_0, +\infty)$ where $\theta_0$ is such that $\kappa_{q, \nu}(\theta_0)=0$. This density profile is proved as a consequence of Theorem \ref{thm:fluctuationsrarefactionfan} in the case $q=\nu$.) Figure \ref{fig:simulation2} provides an additional confirmation using simulation data.

The macroscopic position of the first particle is then given by
\begin{equation*}
\pi(\theta_0) = \frac{1-q}{(\log q)^2} \left(\frac R \nu  \Psi_q'(\theta_0+V) - L\ \Psi_q'(\theta_0) \right),
\end{equation*}
where $\theta_0 = \log_q(\alpha_0)$. Not surprisingly, it is also the drift of a tagged particle in an environment given by $\mu_{\alpha_0}$. This gives another explanation of why the density $\rho_0$ around the first particle is such that $\frac{\partial j(\rho_0)}{\partial \rho}  = \pi(\rho_0) = \frac{j(\rho_0)}{/rho_0}$, which implies that $\rho_0$ maximizes the drift.



\subsection{Magnitude of fluctuations}
\label{subsec:sigma}
One first needs to compute $\lambda = -j''(\rho)$. We have expressions for $j(\rho(\theta))$ and $\rho(\theta)$ but we take the second derivative of the function $j$ with respect to the variable $\rho$. We have that
\begin{multline*}
j''(\rho(\theta)) = \frac{1-q}{(\log q)^3} \frac{(\log q + \Psi_q(\theta) -\Psi_q(\theta+V))^3}{(\Psi_q'(\theta)-\Psi_q'(\theta+V))^2}\times   \\
\left( \frac{R}{\nu} \Psi_q'''(\theta+V) - L\Psi_q'''(\theta) - \left( \frac{R}{\nu} \Psi_q''(\theta+V) - L\Psi_q''(\theta)\right) \frac{\Psi_q''(\theta)-\Psi_q''(\theta+V)}{\Psi_q'(\theta)-\Psi_q'(\theta+V)}\right).
\end{multline*}
Note that the Lemma \ref{lem:thirdderivativegeneral} (proved in Section \ref{subsec:technicalproofs}), implies that $j''(\rho)\neq 0$ so that the main assumption of the KPZ class conjecture is satisfied.

In order to compute $A(\rho)$, we follow \cite{spohn2012kpz} and define
\begin{equation}
Z(\alpha) = \frac{(\alpha\nu ; q)_{\infty}}{(\alpha ; q)_{\infty}},
\label{eq:defZ}
\end{equation}
  the normalization constant in the definition of (\ref{eq:statmeasure}), and $G(\alpha) = \log(Z(\alpha))$. Then
  $$A = \frac{\alpha (\alpha G')'}{(1+\alpha G')^3},$$
  where all derivatives are taken with respect to the variable $\alpha$. (The formula differs by a factor $4$ with \cite{spohn2012kpz} because we take occupation variables $\eta_i\in\lbrace 0,1 \rbrace$ instead of $\lbrace -1,1 \rbrace$.) With  $Z$ as in (\ref{eq:defZ}), we have
  $$G'(\alpha) = \frac{1}{\alpha \log q}\left( \Psi_q(\theta) - \Psi_q(\theta+V)\right),$$
  and
\begin{equation}
A(\theta)=\log q \frac{\Psi_q'(\theta) - \Psi_q'(\theta+V)}{(\log q + \Psi_q(\theta) - \Psi_q(\theta+V))^3}.
\end{equation}
Finally, $\sigma_{q, \nu}(\theta) = \left( \frac{\lambda A^2}{2\rho^3}\right)^{1/3}$ with
\begin{equation}
\frac{\lambda A^2}{2\rho^3} = \frac{q-1}{4(\log q)^4} \left(  \frac{R}{\nu} \Psi_q'''(\theta+V) - L\Psi_q'''(\theta) - \left( \frac{R}{\nu} \Psi_q''(\theta+V) - L\Psi_q''(\theta)\right) \frac{\Psi_q''(\theta)-\Psi_q''(\theta+V)}{\Psi_q'(\theta)-\Psi_q'(\theta+V)}\right).
\label{eq:expressionforsigma}
\end{equation}
One should note that we have always $\sigma_{q, \nu}(\theta)>0$ (see Lemma \ref{lem:thirdderivativegeneral} for a proof of this claim).

\subsection{Critical point Fredholm determinant asymptotics}
\label{subsec:heuristicasymptotics}

We sketch an asymptotic analysis of the Mellin-Barnes Fredholm determinant formula of Theorem \ref{th:fredholmgeneral} that confirms the KPZ class conjecture for the $q$-Hahn AEP. In particular, we recover independently the functions $\pi_{q, \nu}(\theta)$, $\kappa_{q, \nu}(\theta)$ and $\sigma_{q, \nu}(\theta)$ from (\ref{eq:expressionforpi}), (\ref{eq:expressionforkappa}) and (\ref{eq:expressionforsigma}). We do not provide all necessary justifications to make this rigorous. However, in Section \ref{sec:MADM}, we do provide such rigorous justifications for the $\nu=q$ case under certain ranges of parameters.

The  function $x \mapsto 1/(-q^{x} ; q)_{\infty}$ converges to $1$ as $ x\to +\infty$ and $0$ as $x\to-\infty$. Hence the sequence of functions $\left(x \mapsto 1/(-q^{t^{1/3}x} ; q)_{\infty}\right)_{t>0}$ converges to a step function when $t\to\infty$. On account of this,  if we set
\begin{equation*}
\zeta = -q^{-\kappa t - \pi t - t^{1/3}\sigma x},
\end{equation*}
then it follows that for $\sigma >0$,
\begin{equation*}
\lim_{t\to\infty} \EE \left[\frac{1}{\left(\zeta q^{x_{n}(t)+n};q\right)_{\infty}}\right] = \lim_{t\to \infty} \mathbb{P}\left(\frac{x_n(t) - \pi t}{\sigma t^{1/3}} \geqslant x\right) ,
\end{equation*}
with $n=\lfloor \kappa t \rfloor$. Of course, we have omitted here to justify the exchange of limit, and we refer to Section \ref{sec:MADM} where a complete argument is provided.

For the moment, let the constants $\kappa, \pi$ and $\sigma$ remain undetermined.
$\EE \left[\frac{1}{\left(\zeta q^{x_{n}(t)+n};q\right)_{\infty}}\right]$ is given by $\det\left(I + K_{\zeta}\right)$ as in (\ref{eq:MellinBarnes}).
Assume for the moment that the contour $C_1$ for the variable $w$ is a very small circle around $1$. Let us make the change of variables 
$$ w=q^W, \ \ w'=q^{W'},\ \  s+W=Z.$$
Then the Fredholm determinant can be written with the new variables as $\det\left(I + K_{x}\right)$ where $K_x$ is an operator acting on $\mathbb{L}^2(C_0)$ where $C_0$ is the image of $C_1$ under the mapping $w\mapsto \log_q(w)$, defined by its kernel
\begin{multline}
K_{x}(W,W') =\frac{q^W \log q}{2\pi i} \\
\int_{\mathcal{D}_W} \frac{\pi}{\sin(-\pi(Z-W))} \exp\left(t\big(f_0(Z) - f_0(W)\big) - t^{1/3} \sigma x \log(q)(Z-W) \right) \frac{1}{q^Z - q^{W'}} \frac{(\nu q^Z ; q)_{\infty}}{(\nu q^W ; q)_{\infty}} dZ,
\label{eq:kernelexponentialformgeneral}
\end{multline}
where the new contour $\mathcal{D}_W$ as the straight line $W+1/2+i\R$, and the function $f_0$ is defined by
\begin{equation}
f_0(Z) = \kappa \log\left(\frac{(q^Z ; q)_{\infty}}{(\nu q^Z ; q)_{\infty}}\right) + \frac{1-q}{\log(q)} \left(\frac{R}{\nu}\Psi_q(Z+V)    - L \left(\Psi_q(Z) \right)\right) -Z \log(q) \left(\kappa + \pi\right).
\label{eq:deff0general}
\end{equation}
Since $C_1$ was any small enough circle around $1$, $C_0$ can be deformed to be a small circle around $0$, and we can also deform the contour for $Z$ to be simply $1/2+i\R$ without crossing any singularities.

The idea of Laplace's method is to deform the integration contours so that they go across a critical point of $f_0$, and then make a Taylor approximation around the critical point. Actually, we know that the Airy kernel would occur in the limit if this critical point is a double critical point, so we determine our unknown parameters $(\kappa, \pi, \sigma)$ so as to have a double critical point. We have that
\begin{equation}
f_0'(Z) = \kappa \left( \Psi_q(Z+V) -\Psi_q(Z)\right)+\frac{1-q}{\log(q)} \left(\frac{R}{\nu}\Psi_q'(Z+V)    - L \left(\Psi_q'(Z) \right)\right) - \log(q) \left(\kappa + \pi\right),
\end{equation}
and
\begin{equation}
f_0''(Z) = \kappa  \left( \Psi_q'(Z+V) -\Psi_q'(Z)\right)+\frac{1-q}{\log(q)} \left(\frac{R}{\nu}\Psi_q''(Z+V)    - L \left(\Psi_q''(Z) \right)\right).
\end{equation}
We see that if $\pi = \pi_{q, \nu}(\theta)$ and  $\kappa=\kappa_{q, \nu}(\theta)$ as in (\ref{eq:expressionforpi}) and (\ref{eq:expressionforkappa}), then $f_0'(\theta) =f_0''(\theta)=0$. Hence, up to higher order terms in $(Z-\theta)$,
$$ f_0(Z) - f_0(W) \approx \frac{f_0'''(\theta)}{6} \left((Z-\theta)^3 - (W-\theta)^3\right).$$
The next lemma, about the sign of $f_0'''$, is proved in Section \ref{subsec:technicalproofs}.
\begin{lemma}
For any $q\in (0,1)$, $\nu\in [0,1)$, and any $R,L\geqslant 0$ such that $R+L=1$, we have that for all $\theta>0$, $f_0'''(\theta)>0$.
\label{lem:thirdderivativegeneral}
\end{lemma}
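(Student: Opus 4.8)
The plan is to use the critical‑point relations $f_0'(\theta)=f_0''(\theta)=0$ that define $\kappa$ and $\pi$ in Section~\ref{subsec:heuristicasymptotics} to eliminate $\kappa$ from $f_0'''(\theta)$, reducing the statement to a clean determinant‑type inequality. Differentiating the formula for $f_0''$ once more gives $f_0'''(Z)=\kappa\big(\Psi_q''(Z+V)-\Psi_q''(Z)\big)+\tfrac{1-q}{\log q}\big(\tfrac R\nu\Psi_q'''(Z+V)-L\Psi_q'''(Z)\big)$. Abbreviate $\mathcal C:=\Psi_q'(\theta+V)-\Psi_q'(\theta)$, $\mathcal B:=\Psi_q''(\theta+V)-\Psi_q''(\theta)$, $\mathcal A:=\tfrac R\nu\Psi_q''(\theta+V)-L\Psi_q''(\theta)$, $\mathcal D:=\tfrac R\nu\Psi_q'''(\theta+V)-L\Psi_q'''(\theta)$. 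The relation $f_0''(\theta)=0$ is precisely $\kappa\mathcal C+\tfrac{1-q}{\log q}\mathcal A=0$ (equivalently \eqref{eq:expressionforkappa}), so $\kappa=-\tfrac{1-q}{\log q}\mathcal A/\mathcal C$ and $f_0'''(\theta)=\tfrac{1-q}{(\log q)\mathcal C}\big(\mathcal D\mathcal C-\mathcal A\mathcal B\big)$. From \eqref{eq:digammaderivatives}, $\Psi_q'$ is positive and strictly decreasing, so $\mathcal C<0$ (since $\theta+V>\theta>0$ when $\nu\in(0,1)$); together with $\tfrac{1-q}{\log q}<0$ this makes the scalar factor positive, and the lemma reduces to proving $\mathcal D\mathcal C-\mathcal A\mathcal B>0$. (The same inequality also yields $\sigma_{q,\nu}(\theta)>0$ and $j''(\rho)\neq0$ used earlier, after tracking signs in \eqref{eq:expressionforsigma}.)

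Next I would expand via \eqref{eq:digammaderivatives}: with $a_n:=q^{n\theta}/(1-q^n)>0$ and $q^{n(\theta+V)}=q^{n\theta}\nu^n$, one has $\Psi_q^{(k)}(\theta)=(\log q)^{k+1}\sum_{n\ge1}a_n n^k$ and $\Psi_q^{(k)}(\theta+V)=(\log q)^{k+1}\sum_{n\ge1}a_n\nu^n n^k$. Substituting and collecting powers of $\log q$ yields $\mathcal D\mathcal C-\mathcal A\mathcal B=-(\log q)^6\,\Psi$, where
\[
\Psi=\sum_{m,n\ge1}a_m a_n\,(R\nu^{n-1}-L)(1-\nu^m)\,n^2 m\,(n-m)
\]
is an absolutely convergent double series ($a_n$ decays geometrically, $\theta>0$, $|R\nu^{n-1}-L|\le1$); since $(\log q)^6>0$ it remains to prove $\Psi<0$. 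Writing $n^2(R\nu^{n-1}-L)=n\,g(n)$ and $m(1-\nu^m)=m\,h(m)$ with $g(k):=k(R\nu^{k-1}-L)$, $h(k):=1-\nu^k>0$, and averaging $\Psi$ with its $m\leftrightarrow n$ transpose gives
\[
2\Psi=\sum_{m,n\ge1}a_m a_n\,mn\,(n-m)\,h(m)h(n)\,\big(w(n)-w(m)\big),\qquad w(k):=\frac{g(k)}{h(k)}=\frac{k(R\nu^{k-1}-L)}{1-\nu^k}.
\]
Every prefactor $a_m a_n\,mn\,h(m)h(n)$ is positive, so $\Psi<0$ — hence the lemma — will follow once $w$ is shown to be strictly decreasing on $(0,\infty)$: then $(n-m)\big(w(n)-w(m)\big)\le0$, strictly whenever $m\ne n$, and off‑diagonal terms are present.

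The main, and essentially only delicate, step is proving that $w$ is strictly decreasing. Since $w'=(g'h-gh')/h^2$ and $h>0$ on $(0,\infty)$, it suffices to show $g'(x)h(x)-g(x)h'(x)<0$ there; a direct expansion — in which the terms proportional to $x\nu^{2x-1}\ln\nu$ cancel — gives
\[
g'(x)h(x)-g(x)h'(x)=R\,\nu^{x-1}\,\chi(x)-L\,\psi(x),\qquad \chi(x):=1-\nu^x+x\ln\nu,\quad\psi(x):=(1-\nu^x)+x\nu^x\ln\nu.
\]
Both $\chi,\psi$ vanish at $x=0$; and $\chi'(x)=(\ln\nu)(1-\nu^x)<0$, $\psi'(x)=x\nu^x(\ln\nu)^2\ge0$ for $x>0$, so $\chi(x)<0$ and $\psi(x)>0$ on $(0,\infty)$. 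As $R\nu^{x-1}\ge0$, $L\ge0$, and $R+L=1$ (so not both are zero), this forces $g'h-gh'<0$, proving the monotonicity and hence the lemma for $\nu\in(0,1)$. For $\nu=0$ (where $V=+\infty$) I would interpret $\tfrac R\nu\Psi_q^{(k)}(\theta+V)$ as the limit $R(\log q)^{k+1}q^\theta/(1-q)$ — only the $n=1$ term survives — and rerun the computation with $R\nu^{n-1}-L$ replaced by $R\,\mathds{1}_{\{n=1\}}-L$ and $1-\nu^m$ by $1$, in which case $\Psi=-R a_1\sum_m a_m\,m(m-1)-\tfrac L2\sum_{m,n}a_m a_n\,mn\,(m-n)^2<0$. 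Thus the crux is the sign analysis above: the cancellation of the $\nu^{2x-1}\ln\nu$ terms, and the fact that $\chi$ and $\psi$ are pinned to definite signs because they vanish at $0$ with monotone derivatives; everything else is bookkeeping.
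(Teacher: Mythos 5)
Your proof is correct and takes a genuinely different route from the paper's. The paper exploits the affine dependence of $f_0'''(\theta)$ on $R$ (with $L=1-R$) to reduce to the two endpoint cases $R=1$ and $R=0$; in each case it applies Cauchy's mean value theorem to replace the ratio $\big(\Psi_q''(\theta)-\Psi_q''(\theta+V)\big)/\big(\Psi_q'(\theta)-\Psi_q'(\theta+V)\big)$ by $\Psi_q'''(\tilde\theta)/\Psi_q''(\tilde\theta)$ for an intermediate $\tilde\theta$, and then invokes the strict monotonicity of $\theta\mapsto\Psi_q'''(\theta)/\Psi_q''(\theta)$, which is proved by a symmetrized double-sum computation. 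You bypass both the case split and the mean value theorem: eliminating $\kappa$ via the saddle-point relation $f_0''(\theta)=0$ reduces the lemma to the single determinant-type inequality $\mathcal D\mathcal C-\mathcal A\mathcal B>0$, which you expand as a double sum in the coefficients $a_n$, symmetrize in $m\leftrightarrow n$, and reduce to the strict monotonicity of the combined weight $w(k)=k(R\nu^{k-1}-L)/(1-\nu^k)$; you then establish that monotonicity by elementary calculus (the $x\nu^{2x-1}\ln\nu$ cross-terms cancel in $g'h-gh'$, and $\chi,\psi$ are pinned to definite signs since they vanish at $0$ with monotone derivatives). Both arguments ultimately pass through a symmetrized Chebyshev-type double-sum inequality, but the paper's is more modular (a reusable monotonicity lemma for $\Psi_q'''/\Psi_q''$ together with linearity in $R$), while yours handles all $R,L\geqslant 0$ with $R+L=1$ uniformly through the single function $w$, with the $(R,L)$-dependence absorbed into $w$ rather than producing two cases. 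Your treatment of $\nu=0$, replacing $R\nu^{n-1}-L$ by $R\,\mathds{1}_{\{n=1\}}-L$ and $1-\nu^m$ by $1$, is consistent with the paper's closing remark that the $\nu$ in the denominator cancels against a factor emerging from the $q$-digamma series; and your observation that the same inequality $\mathcal D\mathcal C-\mathcal A\mathcal B>0$ yields $\sigma_{q,\nu}(\theta)>0$ and $j''(\rho)\neq0$ agrees with how the paper uses this lemma elsewhere.
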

Using Lemma \ref{lem:thirdderivativegeneral} we know the behaviour of $\Real[f_0]$ in the neighbourhood of $\theta$. To make Laplace's method rigorous, we must control the real part of $f_0$ along the contours for $Z$ and $W$, and prove that only the integration in the neighbourhood of $\theta$ has a contribution to the limit. We do not prove that here, and the rest of the asymptotic analysis presented in this section would require some additional effort to be completely rigorous.

Assume that one is able to deform the contours for $Z$ and $W$ passing through $\theta$ so that
\begin{itemize}
\item The contour for $Z$ departs $\theta$ with angles $\xi$ and $-\xi$ where $\xi\in (\pi/6, \pi/2)$, and $\Real[ f_0]$ attains  its maximum uniquely at $\theta$,
\item The contour for $W$ departs $\theta$ with angles $\omega$ and $-\omega$ where $\omega\in (\pi/2, 5\pi/6)$, and $\Real[f_0]$ attains its minimum uniquely at  $\theta$.
\end{itemize}
Then, modulo some estimates that we do not state explicitly here, the Fredholm determinant can be approximated by the following. We make the change of variables $Z-\theta= zt^{-1/3}$ and likewise for $W$ and $W'$. Taking into account the Jacobian of the $W$ and $W'$ change of variables, we get that the kernel has rescaled to
\begin{equation}
\tilde{K}_x(w,w') = \frac{1}{2i\pi} \int \frac{1}{w-z} \frac{1}{z-w'}\exp\left(\frac{f_0'''(\theta)}{2}\left(z^3/3 - w^3/3\right) - \sigma x \log(q)(z-w) \right)\mathrm{d}z.
\end{equation}
Finally, if we set $\sigma = \left( \frac{-f_0'''(\theta)}{2 (\log q)^3}\right)^{1/3}$, and we make the change of variables replacing  $  - z\sigma \log(q)$ by $z$ and likewise for $w$ and $w'$,  we get the kernel
\begin{equation}
\tilde{K}_x(w,w') = \frac{1}{2i\pi} \int_{\infty e^{-i\pi/3}}^{\infty e^{i\pi/3}} \frac{1}{w-z} \frac{1}{z-w'}e^{z^3/3 - w^3/3 + x (z-w)}\mathrm{d}z,
\end{equation}
acting on a contour coming from $\infty e^{-2i\pi/3}$ to $\infty e^{2i\pi/3}$ which does not intersect the contour for $z$. Let us call $  \mathcal{G}$ this contour. Using the ``$\det(I-AB) =  \det(I-BA)$ trick'' to reformulate Fredholm determinants, see e.g. Lemma 8.6 in \cite{borodin2012free}, one has that
$$\det(I+ \tilde{K}_x)_{\mathbb{L}^2(\mathcal{G})} = \det(I-K_{\rm Ai})_{\mathbb{L}^2(-x, +\infty)},$$
where $K_{\rm Ai}$  is the Airy kernel defined in \ref{def:TW}. Since $F_{\rm GUE}(x) = \det(I-K_{\rm Ai})_{\mathbb{L}^2(-x, +\infty)}$, we have that
$$\lim_{t\to\infty} \PP\left(\frac{x_n(t)-t\pi(\theta)}{\sigma(\theta)t^{1/3}}\geqslant x\right) \xrightarrow[t\to\infty]{} F_{GUE}(-x)$$
as claimed in (\ref{eq:KPZclassconjecture}).

The expression for $\sigma_{q, \nu}(\theta)$ in (\ref{eq:expressionforsigma})  is the same as $\sigma = \left( \frac{-f_0'''(\theta)}{2 (\log q)^3}\right)^{1/3}$. Indeed, we have that
\begin{equation}
f_0'''(Z) = \frac{1-q}{\log q} \left(  \frac{R}{\nu} \Psi_q'''(Z+V) - L\Psi_q'''(Z) - \left( \frac{R}{\nu} \Psi_q''(\theta+V) - L\Psi_q''(\theta)\right) \frac{\Psi_q''(Z)-\Psi_q''(Z+V)}{\Psi_q'(\theta)-\Psi_q'(\theta+V)}\right),
\label{eq:thirdderivativegeneral}
\end{equation}
so that $(\sigma_{q, \nu}(\theta))^3 = \frac{- f_0'''(\theta)}{2 (\log q)^3}.$

\section{Asymptotic analysis}
\label{sec:MADM}

In this section, we make the arguments of Section \ref{subsec:heuristicasymptotics} rigorous in the case $\nu=q$, which, in light of Section \ref{subsec:MADM} corresponds with the MADM. Consequently, we also provide a proof of Theorems \ref{thm:fluctuationsintro2} and \ref{thm:fluctuationsintro1} from the Introduction.  In order to simplify the notations we set $\pi(\theta) = \pi_{q, q, R}(\theta)$, $\kappa(\theta) = \kappa_{q, q, R}(\theta)$, and $\sigma(\theta) = \sigma_{q, q, R}(\theta)$, without writing explicitly the dependency on the parameters $q$ and $R$.

\begin{definition}
\label{def:TW}
The distribution function $F_{\rm GUE}(x)$ of the GUE Tracy-Widom distribution is defined  by $F_{\rm GUE}(x)=\det(I-K_{\rm Ai})_{\mathbb{L}^2(x,+\infty )}$ where $K_{\rm Ai}$ is the Airy kernel,
\begin{equation*}
K_{\rm Ai} (u, v) = \frac{1}{(2i\pi)^2} \int_{e^{-2i\pi/3}\infty}^{e^{2i\pi/3}\infty} \mathrm{d}w \int_{e^{-i\pi/3}\infty}^{e^{i\pi/3}\infty} \mathrm{d}z \frac{e^{z^3/3-zu}}{e^{w^3/3-wv}}\frac{1}{z-w},
\end{equation*}
where the contours for $z$ comes from infinity with an angle $-\pi/3$ and go to infinity with an angle $\pi/3$ ; the contour for  $w$ comes from infinity with an angle $-2\pi/3$ and go to infinity with an angle $2\pi/3$, and both contours do not intersect.
\end{definition}

\begin{theorem}
\label{thm:fluctuationsrarefactionfan}
Fix $q\in(0,1)$, $\nu=q$ and $R>L\geqslant 0$ with $R+L=1$. Let $\theta >0$ such that $\kappa\left(\theta\right) \geqslant 0$. Suppose additionally that $q^{\theta}>2q/(1+q)$. Then, for $n=\lfloor \kappa(\theta ) t\rfloor$, we have
$$ \lim_{t\to \infty} \mathbb{P}\left(\frac{x_n(t) - \pi(\theta)t}{\sigma(\theta) t^{1/3}} \geqslant x\right) = F_{\mathrm{GUE}}(-x).$$
\end{theorem}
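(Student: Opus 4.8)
The plan is to carry out the steepest-descent program sketched in Section~\ref{subsec:heuristicasymptotics}, specialized to $\nu=q$, rigorously. The starting point is the Mellin-Barnes Fredholm determinant of Theorem~\ref{th:fredholmgeneral}: setting $\zeta = -q^{-\kappa(\theta) t - \pi(\theta) t - \sigma(\theta) t^{1/3} x}$, one has
\[
\EE\left[\frac{1}{(\zeta q^{x_n(t)+n};q)_\infty}\right] = \det(I+K_\zeta)_{L^2(C_1)},
\]
and the first step is to show that the left-hand side converges to $\PP(x_n(t)\geqslant \pi(\theta)t + \sigma(\theta)t^{1/3}x)$ up to vanishing error. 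Since $x\mapsto 1/(-q^{t^{1/3}x};q)_\infty$ converges to the indicator $\mathds{1}_{x>0}$ and is monotone, this is a soft argument provided one has tightness of $(x_n(t)-\pi(\theta)t)/t^{1/3}$; tightness in turn follows a posteriori from the determinantal asymptotics, so one sets up the usual $\e$-sandwiching as in \cite{ferrari2013tracy, veto2014tracy}.

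Next I would change variables $w=q^W$, $w'=q^{W'}$, $s+W=Z$ to bring the kernel to the exponential form \eqref{eq:kernelexponentialformgeneral} with $f_0$ as in \eqref{eq:deff0general} (with $V=1$ since $\nu=q$). The heart of the proof is the contour analysis for $f_0$. By Lemma~\ref{lem:thirdderivativegeneral}, $\theta$ is a genuine double critical point with $f_0'''(\theta)>0$, so locally the picture is the standard cubic one. I would then exhibit explicit deformable contours: a $Z$-contour $\mathcal D$ leaving $\theta$ at angles $\pm\xi$ with $\xi\in(\pi/6,\pi/2)$ along which $\Real f_0$ is maximized uniquely at $\theta$, and a $W$-contour $C_0$ leaving $\theta$ at angles $\pm\omega$ with $\omega\in(\pi/2,5\pi/6)$ along which $\Real f_0$ is minimized uniquely at $\theta$, while keeping $C_0$ a closed contour around $0$ (in the $w$-variable) and $\mathcal D$ avoiding the poles of $\pi/\sin(\pi(Z-W))$ and of $1/(q^Z-q^{W'})$ and the singularities of $g$ at $q^{-n}$. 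This is exactly where the technical hypothesis $q^\theta > 2q/(1+q)$ enters: it is the condition under which one can actually find such globally good contours — in particular it controls the competition between the nearest pole of the $q$-digamma terms and the location of $\theta$. I would likely follow the interpolation trick mentioned in the introduction, verifying the steepest-descent estimates first at the tractable endpoints $L=0$ and $R=qL$ where $f_0$ has a manageable closed form, and then arguing that the favorable geometry persists along the segment of $(R,L)$ values.

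Once the contours are fixed, the remaining steps are routine but must be written out: (i) localize both integrations to a $t^{-1/3+\delta}$-neighborhood of $\theta$, the complement contributing $e^{-ct^{\delta'}}$ by the strict max/min property of $\Real f_0$ and an estimate on the oscillatory $\pi/\sin$ and $1/(q^Z-q^{W'})$ factors (uniform boundedness of $|g(w)/g(q^sw)|$ on the relevant regions, as in the proof of Theorem~\ref{th:fredholmgeneral}); (ii) rescale $Z-\theta=zt^{-1/3}$, $W-\theta=wt^{-1/3}$, Taylor-expand $f_0$ to cubic order, and show the prefactors converge pointwise to $1/(w-z)\cdot 1/(z-w')$ with the $\pi/\sin$ factor becoming $1/(z-w')$-type after absorbing the residue structure; (iii) apply dominated convergence (Hadamard bound plus a Gaussian-type decay estimate away from $\theta$) to pass the limit inside the Fredholm expansion, obtaining the kernel $e^{z^3/3-w^3/3+x(z-w)}/((w-z)(z-w'))$ after the final rescaling $-z\sigma(\theta)\log q\mapsto z$, with $\sigma(\theta)^3 = -f_0'''(\theta)/(2(\log q)^3)$ matching \eqref{eq:expressionforsigma}; (iv) convert this into $\det(I-K_{\rm Ai})_{L^2(-x,\infty)} = F_{\rm GUE}(-x)$ by the standard $\det(I+AB)=\det(I+BA)$ reformulation. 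The main obstacle is unquestionably step (ii) of the second paragraph — producing honest global contours on which $\Real f_0$ behaves monotonically, since $f_0$ involves $q$-digamma functions with infinitely many poles and the naive steepest-descent curves need not be closeable or pole-avoiding; this is the reason for both the parameter restriction $q^\theta>2q/(1+q)$ and the $L=0$/$R=qL$ interpolation.
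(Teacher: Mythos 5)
Your proposal is correct and follows essentially the same route the paper takes: Mellin--Barnes Fredholm determinant, change of variables $w=q^W$, $s+W=Z$, double critical point at $\theta$ with $f_0'''(\theta)>0$, steep-descent contours verified by interpolating the computations between $R=1$ and $R=qL$ using the linearity of $f_0$ in $R$, localization, cubic Taylor expansion, dominated convergence, and the $\det(I+AB)=\det(I+BA)$ reformulation into the Airy kernel. Two small imprecisions worth flagging: (a) for the first step the paper invokes a ready-made probabilistic lemma (Lemma 4.1.39 of Borodin--Corwin) converting the $e_q$-Laplace transform directly into the CDF limit without needing a separate tightness/$\epsilon$-sandwiching argument; and (b) the condition $q^\theta>2q/(1+q)$ does not arise from the $q$-digamma poles per se, but from requiring $|qw|<\alpha$ for all $w$ on the $W$-contour (a circle of radius $1-\alpha$ around $1$), so that the poles of the $\pi/\sin(\pi(Z-W))$ factor at $Z=W+1, W+2,\dots$ remain strictly to the right of the vertical $Z$-contour through $\theta$ after the deformation; moreover the paper's global $Z$-contour is exactly that vertical line (angle $\pm\pi/2$), with wedge angles in $(\pi/6,\pi/2)$ appearing only in the local post-localization deformation.
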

\begin{remark}
In Figures \ref{fig:simulation1} and \ref{fig:simulation2}, one can see that the simulated curve is above the limiting curve predicted from KPZ scaling theory. This is coherent with the positive sign of $\sigma(\theta)$ (This is a consequence of Lemma \ref{lem:thirdderivativegeneral}, proved in Section \ref{subsec:technicalproofs}) and the fact that the Tracy-Widom distribution has negative mean.
\end{remark}

\begin{theorem}
\label{thm:fluctuationsparticle1}
Fix $q\in (0,1)$, $\nu=q$ and let
$$ R_{min}(q) = \frac{q\Psi_q''\left(\log_q\left(\frac{2q}{1+q} \right)\right)}{\Psi_q''\left(\log_q\left(\frac{2q}{1+q} \right)\right)+q\Psi_q''\left(\log_q\left(\frac{2q^2}{1+q} \right)\right)}\in \left(\frac{1}{2}, 1\right).$$
Then for $R_{min}(q)<R<1$ and $L=1-R$, there exists a real number $\theta_0>0$ such that $\kappa_{q, q,R}(\theta_0)=0$, and we have
$$ \lim_{t\to \infty} \mathbb{P}\left(\frac{x_1(t) - \pi(\theta_0)t}{\sigma(\theta_0) t^{1/3}} \geqslant x\right) = F_{\mathrm{GUE}}(-x).$$
\end{theorem}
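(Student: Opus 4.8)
The plan is to carry out the steepest--descent analysis of the Mellin--Barnes Fredholm determinant \eqref{eq:MellinBarnes} just as one would for Theorem \ref{thm:fluctuationsrarefactionfan}, but with the particle index held fixed at $n=1$ and with the spatial/temporal scaling dictated by the parameter $\theta_0$ at which the rescaled current $\kappa$ vanishes. The first step is to pin down $\theta_0$. Specializing \eqref{eq:expressoinforkappaintro} to $\nu=q$ (so that $V=1$), the condition $\kappa_{q,q,R}(\theta_0)=0$ is equivalent to $\tfrac Rq\Psi_q''(\theta_0+1)=L\,\Psi_q''(\theta_0)$, i.e.\ to $R=q\Psi_q''(\theta_0)\big/\bigl(q\Psi_q''(\theta_0)+\Psi_q''(\theta_0+1)\bigr)$. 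From the series \eqref{eq:digammaderivatives} one reads off that $\Psi_q''<0$ and that $\Psi_q'''/\Psi_q''$ is increasing; hence $\theta_0\mapsto\Psi_q''(\theta_0)/\Psi_q''(\theta_0+1)$ is a decreasing bijection of $(0,\infty)$ onto $(1/q,\infty)$, and the right-hand side above is a decreasing bijection of $(0,\infty)$ onto $(1/2,1)$, whose value at $\theta_0=\log_q\!\bigl(2q/(1+q)\bigr)$ is exactly $R_{\min}(q)$. Thus for $R\in(R_{\min}(q),1)$ there is a unique $\theta_0\in\bigl(0,\log_q(2q/(1+q))\bigr)$ with $\kappa_{q,q,R}(\theta_0)=0$, and in particular $q^{\theta_0}>2q/(1+q)$.

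Next I would set up the limit. Put $\zeta=-q^{-\pi(\theta_0)t-\sigma(\theta_0)t^{1/3}x}$ and apply \eqref{eq:MellinBarnes} with $n=1$. Since $y\mapsto 1/(-q^{t^{1/3}y};q)_\infty\to\mathds{1}_{y\ge 0}$, and granting the exchange of limits — justified exactly as in the rigorous proof of Theorem \ref{thm:fluctuationsrarefactionfan}, using monotonicity of $\zeta\mapsto 1/(\zeta q^{x_1(t)+1};q)_\infty$ along $\R_-$ together with a domination estimate — the claim reduces to $\det(I+K_\zeta)\to F_{\mathrm{GUE}}(-x)$. The substitutions $w=q^W,\ w'=q^{W'},\ Z=s+W$ bring $K_\zeta$ to the exponential form \eqref{eq:kernelexponentialformgeneral}, with exponent $t\bigl(f_0(Z)-f_0(W)\bigr)-t^{1/3}\sigma x\log(q)(Z-W)$ where $f_0$ is given by \eqref{eq:deff0general} with $\kappa$ set to $0$: indeed the power $n=1$ now contributes, instead of a factor of size $e^{O(t)}$, only a bounded factor coming from the first power of $(\nu w;q)_\infty/(w;q)_\infty$, which is harmless since $W,Z$ range over compact sets avoiding the singularities. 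Thus
\[
f_0(Z)=\frac{1-q}{\log q}\left(\frac Rq\,\Psi_q(Z+1)-L\,\Psi_q(Z)\right)-Z\log(q)\,\pi(\theta_0).
\]
Using \eqref{eq:expressionforpiintro} one checks that $f_0'(\theta_0)=0$ (the correction terms in that formula for $\pi(\theta_0)$ cancel precisely because $\kappa(\theta_0)=0$), while $f_0''(\theta_0)=0$ \emph{is} the equation $\kappa(\theta_0)=0$; hence $\theta_0$ is a double critical point of $f_0$. Moreover $f_0'''(\theta_0)>0$: this is the $\nu=q$, $\kappa(\theta_0)=0$ case of \eqref{eq:thirdderivativegeneral} and Lemma \ref{lem:thirdderivativegeneral} — the middle term in \eqref{eq:thirdderivativegeneral} vanishes by $\kappa(\theta_0)=0$, leaving $\tfrac{1-q}{\log q}\bigl(\tfrac Rq\Psi_q'''(\theta_0+1)-L\Psi_q'''(\theta_0)\bigr)$, which is positive because $L>0$, $\Psi_q''<0$, $\tfrac{1-q}{\log q}<0$, and $\Psi_q'''/\Psi_q''$ is increasing. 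Consequently $\sigma(\theta_0)=\bigl(-f_0'''(\theta_0)/(2(\log q)^3)\bigr)^{1/3}>0$ is well defined and equals \eqref{eq:expressionforsigma}.

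The analytic heart of the proof — and the step I expect to be the main obstacle — is the global contour deformation. One must deform the $W$-contour (initially a small loop around $0$) and the $Z$-contour onto steepest--descent paths through $\theta_0$, departing with angles in $(\pi/2,5\pi/6)$ and in $(\pi/6,\pi/2)$ respectively, so that $\Real f_0$ attains a strict global minimum on the $W$-contour, and a strict global maximum on the $Z$-contour, uniquely at $\theta_0$, while crossing none of the poles of the kernel and none of the singularities of the $q$-digamma function. This is exactly the step into which the condition $q^{\theta_0}>2q/(1+q)$ enters; I would handle it by reusing the contour constructions developed in the proof of Theorem \ref{thm:fluctuationsrarefactionfan} (themselves obtained, as announced in the introduction, by interpolating in the pair $(R,L)$ between the tractable cases $L=0$ and $R=qL$), now applied with $\kappa=0$ at $\theta=\theta_0$. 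Once the contours are in place, the rest is routine: rescale $Z-\theta_0=zt^{-1/3}$, $W-\theta_0=wt^{-1/3}$, $W'-\theta_0=w't^{-1/3}$, use the cubic Taylor expansion of $f_0$ at $\theta_0$ together with Hadamard's bound and dominated convergence to discard the contributions away from $\theta_0$ and to show that the rescaled kernel converges — after the substitution $-\sigma(\theta_0)\log(q)\,z\mapsto z$ and likewise for $w,w'$ — to the kernel $\tilde K_x$ of Section \ref{subsec:heuristicasymptotics}; the ``$\det(I-AB)=\det(I-BA)$'' reformulation then identifies $\lim_{t\to\infty}\det(I+K_\zeta)$ with $\det(I-K_{\rm Ai})_{\mathbb L^2(-x,+\infty)}=F_{\mathrm{GUE}}(-x)$ (Definition \ref{def:TW}), which is the assertion of the theorem.
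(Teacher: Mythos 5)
Your proposal follows the paper's own proof essentially verbatim: you identify the unique $\theta_0>0$ solving $\kappa_{q,q,R}(\theta_0)=0$ and show via monotonicity of $\theta\mapsto\Psi_q''(\theta+1)/\Psi_q''(\theta)$ that $R>R_{\min}(q)$ is exactly the condition $q^{\theta_0}>2q/(1+q)$; you observe that with $n=1$ the power of $(\nu w;q)_\infty/(w;q)_\infty$ contributes only a bounded factor tending to $1$ near $\theta_0$ rather than an $e^{O(t)}$ term, so that $f_0$ is the $\kappa=0$ specialization of \eqref{eq:deff0general}; and you invoke the steep-descent contours and convergence estimates from the proof of Theorem \ref{thm:fluctuationsrarefactionfan}, valid under the established condition $q^{\theta_0}>2q/(1+q)$. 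The only difference is that you spell out the verifications $f_0'(\theta_0)=f_0''(\theta_0)=0$ and $f_0'''(\theta_0)>0$ directly from the $\kappa(\theta_0)=0$ relation, while the paper simply points to the general formulas and Lemma \ref{lem:thirdderivativegeneral}; this is a matter of exposition, not a different argument.
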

\begin{remark}
We expect the same kind of result for the fluctuations of the position of the first particle in any $q$-Hahn AEP with positive asymmetry, when the parameter $\nu$ is such that $0<\nu<1$.
\end{remark}

\begin{remark}
The condition $q^{\theta}>2q/(1+q)$ in Theorem \ref{thm:fluctuationsrarefactionfan} is probably just technical. It ensures that we do not cross any residues when deforming the integration contour in the definition of the kernel $K_{\zeta}$ in Theorem \ref{th:fredholmgeneral} (see Remark \ref{rem:restrictivecondition}). The condition $ R_{min}(q)<R$ in Theorem \ref{thm:fluctuationsparticle1}  is equivalent to $q^{\theta}>2q/(1+q)$ in the particular setting of Theorem \ref{thm:fluctuationsparticle1}.

However, the condition $R<1$ is really meaningful, since in the totally asymmetric case ($R=1$), the first particle has Gaussian fluctuations.
\end{remark}

\begin{figure}[ht]
\begin{center}
\includegraphics[scale=0.4]{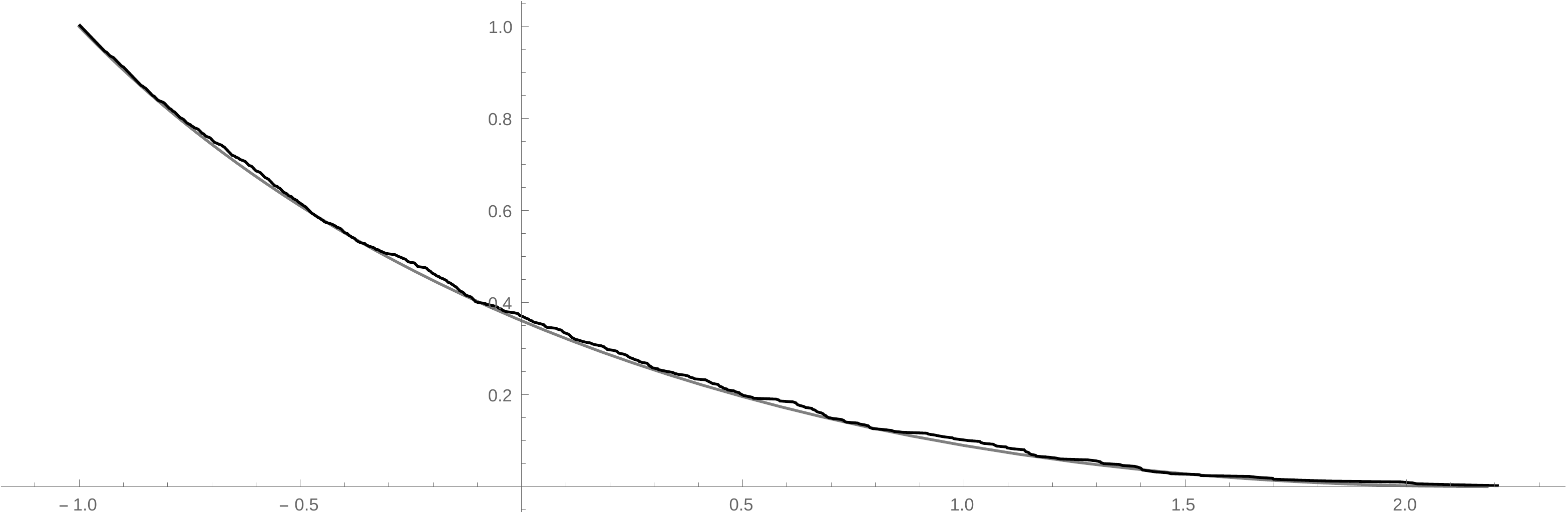}
\end{center}
\caption{Comparison between simulated numerical data and predicted hydrodynamic limit. The black curve is $(x_N(t)/t, N/t)_N$ for $N$ ranging from 1 to $t=500$ (which is fixed) in the totally asymmetric case ($R=1, L=0$), with $\nu=q=0.4$. This is also the graph of the function $x\mapsto N_{t x}(t)/t$, where by definition $N_x(t)$ is the number of particles right to $x$ at time $t$. The gray curve is the parametric curve $(\pi(\theta), \kappa(\theta))_{\theta\in(0,+\infty)}$ with $\pi(\theta)$ and $\kappa(\theta)$ as in (\ref{eq:expressionforpi}) and (\ref{eq:expressionforkappa}). }
\label{fig:simulation1}
\end{figure}

\begin{figure}[ht]
\begin{center}
\includegraphics[scale=0.4]{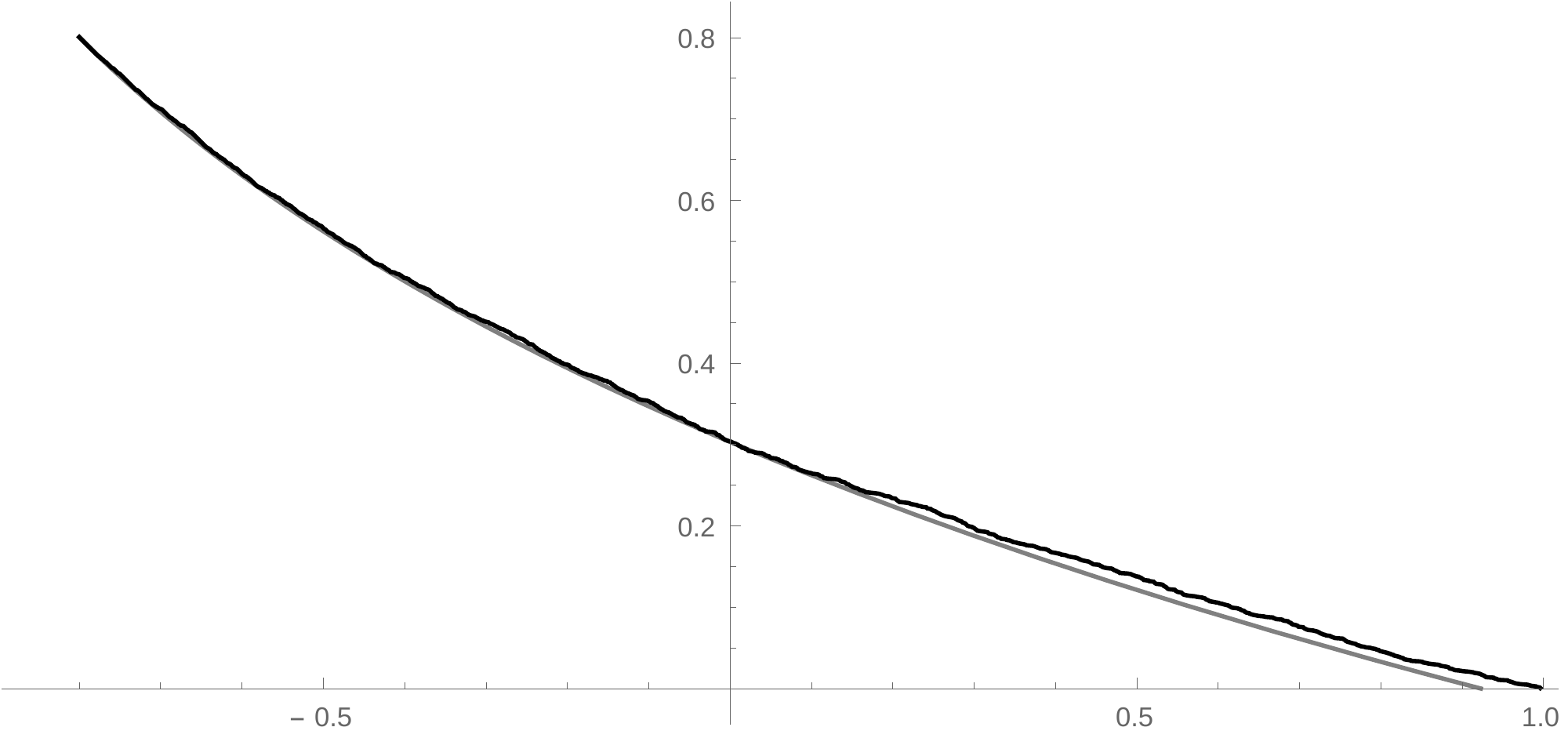}
\end{center}
\caption{The black curve is a simulation of $(x_N(t)/t, N/t)_N$ for $N$ ranging from 1 to $(R-L)t$, with $t=1500$ fixed, $R=0.9, L=.1$ and $\nu=q=0.6$. The gray curve is the parametric curve $(\pi(\theta), \kappa(\theta))_{\theta\in(\theta_0, +\infty)}$ where $\theta_0$ is such that $\kappa(\theta_0)$ as in Section \ref{subsubsec:kappa}. It goes from the point $(L-R,R-L)$ to the point $(\pi(\theta_0),0)$.
Since the slope of the curve $x\mapsto N_{t x}(t)/t$ (or equivalently $(x_N(t), N/t)_N$) is the macroscopic density $\rho(x, 1)$, this simulationally confirms the discontinuity of density at the point $\pi(\theta_0)$ (see Figure \ref{fig:densityprofileintro}).}
\label{fig:simulation2}
\end{figure}

\subsection{Proof of Theorem \ref{thm:fluctuationsrarefactionfan}}
The proof uses Laplace's method and follows the style of \cite{ferrari2013tracy} (similar proofs can be found in \cite{barraquand2014phase} for $q$-TASEP with slow particles, in \cite{borodin2012free} for the semi-discrete directed polymer, and in \cite{veto2014tracy} for the $q$-Hahn TASEP).

Fix $q\in(0,1)$, $\nu=q$, $R>L\geqslant 0$ with $R+L=1$ and $\theta >0$ such that $\kappa(\theta) \geqslant 0$. In the particular case $q=\nu$, Theorem \ref{th:fredholmgeneral} states that for all $\zeta\in\C\setminus \R_+$,
\begin{equation}
\EE \left[\frac{1}{\left(\zeta q^{x_{n}(t)+n};q\right)_{\infty}}\right] = \det\left(I + K_{\zeta}\right)
\end{equation}
where $\det\left(I + K_{\zeta}\right)$ is the Fredholm determinant of $K_\zeta: L^2(C_1)\to L^2(C_1)$ for $C_1$ a positively oriented circle containing $1$ with small enough radius so as to not contain $0$, $1/q$. The operator $K_\zeta$ is defined in terms of its integral kernel
\begin{equation}
K_{\zeta}(w,w') = \frac{1}{2\pi i} \int_{-i \infty + 1/2}^{i\infty +1/2} \frac{\pi}{\sin(-\pi s)} (-\zeta)^s \frac{g(w)}{g(q^s w)} \frac{1}{q^s w - w'} ds
\label{eq:kernelMADM}
\end{equation}
with
\begin{multline*}
g(w) = \\ \left(\frac{1}{1-w}\right)^{n}
 \exp\left(\frac{(q-1)t}{\log(q)} \left(\frac{R}{q}\big(\Psi_q(W+1)+\log(1-q) \big) - L \big(\Psi_q(W)+\log(1-q) \big)   \right)\right)
 \frac{1}{(q w;q)_{\infty}},
\end{multline*}
where $W=\log_q(w)$.
\begin{remark}\label{rem:ratiosimpler}
One notices that the argument of the exponential simplifies to $t\frac{(1-q)}{1+q} \frac{w}{1-w} $ when $R/L=q$.  This yields a simpler analysis, though we work with the general $R,L$ case here.
\end{remark}

In order to compute the probability distribution of $\frac{x_n(t) - \pi(\theta)t}{\sigma(\theta) t^{1/3}}$ from our $e_q$-Laplace transform formula, we use
\begin{lemma}[Lemma 4.1.39 \cite{borodin2014macdonald} ]\label{lem:Macdonald4.1.39}
Consider a sequence of functions $\{f_t\}_{t\geqslant 1}$ mapping $\R\to [0,1]$ such that for each $n$, $f_t(x)$ is strictly decreasing in $x$ with a limit of $1$ as $x\to-\infty$ and $0$ as $x\to +\infty$, and for each $\delta>0$, on $\R\setminus[-\delta,\delta]$ $f_t$ converges uniformly to $\mathds{1}_{\lbrace x\leqslant 0 \rbrace}$ as $t\to\infty$. Define the $r$-shift of $f_t$ as $f^r_t(x) = f_t(x-r)$. Consider a sequence of random variables $X_t$ such that for each $r\in \R$,
$$\EE[f^r_t(X_n)] \to p(r) $$
and assume that $p(r)$ is a continuous probability distribution function. Then $X_n$ converges weakly in distribution to a random variable $X$ which is distributed according to $\PP(X\leqslant  r) = p(r)$.
\end{lemma}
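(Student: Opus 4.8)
The plan is to deduce from the hypotheses that $\PP(X_t \le r) \to p(r)$ for \emph{every} $r \in \R$; since $p$ is a continuous distribution function, this is exactly the assertion that $X_t$ converges weakly to a random variable $X$ with $\PP(X \le r) = p(r)$. The only real work is to transfer information from $\EE[f_t^r(X_t)]$ to $\PP(X_t \le r)$, and the key point is that although $f_t$ is not an indicator it is sandwiched between shifted indicators up to an error that the uniform-convergence hypothesis annihilates.

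First I would record, for fixed $\delta > 0$ and any $y \in \R$, the elementary two-sided bound coming from $f_t$ being nonincreasing and $[0,1]$-valued:
\[
f_t(-\delta)\,\mathds{1}_{\{y \le -\delta\}} \;\le\; f_t(y) \;\le\; \mathds{1}_{\{y \le \delta\}} + f_t(\delta).
\]
Indeed, on $\{y\le -\delta\}$ monotonicity gives $f_t(y)\ge f_t(-\delta)$ while on $\{y>-\delta\}$ the left side is $0$; similarly on $\{y\le\delta\}$ the middle term is at most $1$, and on $\{y>\delta\}$ it is at most $f_t(\delta)$. Evaluating at $y = X_t - r$, so that $f_t(y) = f^r_t(X_t)$, and taking expectations yields
\[
f_t(-\delta)\,\PP(X_t \le r-\delta) \;\le\; \EE[f^r_t(X_t)] \;\le\; \PP(X_t \le r+\delta) + f_t(\delta).
\]
By the hypothesis that $f_t \to \mathds{1}_{\{x\le 0\}}$ uniformly on $\R\setminus[-\delta/2,\delta/2]$ we have $f_t(\delta)\to 0$ and $f_t(-\delta)\to 1$ as $t\to\infty$. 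Letting $t\to\infty$ and using $\EE[f^r_t(X_t)]\to p(r)$ together with $\PP(X_t\le\cdot)\in[0,1]$ gives
\[
\limsup_{t\to\infty}\PP(X_t \le r-\delta) \;\le\; p(r) \;\le\; \liminf_{t\to\infty}\PP(X_t \le r+\delta).
\]

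Finally I would substitute $r \mapsto s+\delta$ in the left-hand inequality and $r\mapsto s-\delta$ in the right-hand one, obtaining for all $s\in\R$ and all $\delta>0$
\[
p(s-\delta) \;\le\; \liminf_{t\to\infty}\PP(X_t\le s) \;\le\; \limsup_{t\to\infty}\PP(X_t\le s) \;\le\; p(s+\delta).
\]
Letting $\delta\downarrow 0$ and using continuity of $p$ collapses this to $\lim_{t\to\infty}\PP(X_t\le s)=p(s)$ for every $s\in\R$, which is the claimed weak convergence of $X_t$.

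I do not expect a genuine obstacle here: the argument is soft and uses only monotonicity of $f_t$, the values of $f_t$ at the two auxiliary points $\pm\delta$, and continuity of $p$ at the very end (this last is what upgrades ``convergence at continuity points of the limit'' to convergence everywhere). The only things to be careful about are the directions of the inequalities — which is why $r+\delta$ appears on one side and $r-\delta$ on the other — and the observation that $f_t(\pm\delta)$ have the stated limits for \emph{any} $\delta>0$, since $\{\delta\}$ and $\{-\delta\}$ lie in the region $\R\setminus[-\delta/2,\delta/2]$ on which uniform convergence is assumed; strict monotonicity of $f_t$ and the precise limits of $f_t$ at $\pm\infty$ are not actually needed. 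One could instead route the conclusion through the Portmanteau theorem, but the direct estimate above is the most transparent.
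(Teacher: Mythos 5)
The paper does not prove this lemma itself; it cites it directly from Borodin--Corwin (Lemma 4.1.39 of \cite{borodin2014macdonald}), where the proof is the same two-sided sandwich of $f_t(\cdot - r)$ between shifted indicators plus a vanishing error term. Your reconstruction is correct and is essentially identical to that argument: the bound $f_t(-\delta)\mathds{1}_{\{y\le -\delta\}}\le f_t(y)\le \mathds{1}_{\{y\le\delta\}}+f_t(\delta)$, the passage to $\limsup/\liminf$ of $\PP(X_t\le r\mp\delta)$, and the final use of continuity of $p$ to squeeze as $\delta\downarrow 0$ are all exactly as in the cited source (and your side remarks that strict monotonicity and the limits at $\pm\infty$ are superfluous are accurate).
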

The sequence of functions $\left(f_t(x): x \mapsto 1/(-q^{-t^{1/3}x} ; q)_{\infty}\right)_{t>0}$
 satisfies the hypotheses of lemma \ref{lem:Macdonald4.1.39}. Hence, if we set
\begin{equation*}
\zeta = -q^{-\kappa(\theta) t - \pi(\theta) t -t^{1/3}\sigma(\theta) x},
\end{equation*}
and prove that $\EE \left[\frac{1}{\left(\zeta q^{x_{n}(t)+n};q\right)_{\infty}}\right]$ converges to the Tracy-Widom distribution (which is continuous), then it will imply that
\begin{equation*}
\lim_{t\to\infty} \EE \left[\frac{1}{\left(\zeta q^{x_{n}(t)+n};q\right)_{\infty}}\right] = \lim_{t\to \infty} \mathbb{P}\left(\frac{x_n(t) - \pi(\theta)t}{\sigma(\theta) t^{1/3}} \geqslant x\right)  = F_{GUE}(-x),
\end{equation*}
with $n=\lfloor \kappa(\theta) t\rfloor$.

Following the path described in Section \ref{subsec:heuristicasymptotics}, we make the change of variables:
$$ w=q^W, \ \ w'=q^{W'},\ \  s+W=Z.$$
The Fredholm determinant $\det\left(I + K_{\zeta}\right)$ equals $\det\left(I + K_{x}\right)$ where $K_x$ is an operator acting on $\mathbb{L}^2(C_0)$ where $C_0$ is a small circle around $0$, defined by its kernel
\begin{multline}
K_{x}(W,W') =\frac{q^W \log q}{2\pi i}
\int_{\mathcal{D}} \frac{\pi}{\sin(-\pi(Z-W))} \\ \times \exp\left(t\big(f_0(Z) - f_0(W)\big) - t^{1/3} \sigma(\theta) \log(q) x (Z-W) \right) \frac{1}{q^Z - q^{W'}} \frac{(q^{Z+1} ; q)_{\infty}}{(q^{W+1} ; q)_{\infty}} dZ,
\label{eq:kernelexponentialform}
\end{multline}
where the new contour $\mathcal{D}$ is the straight line $1/2+i\R $, and the function $f_0$ is defined by
\begin{equation}
f_0(Z) = \kappa(\theta) \log(1-q^Z) + \frac{1-q}{\log(q)} \left( \frac{R}{q} \Psi_q(Z+1) - L \Psi_q(Z) \right)
 -Z \log(q) \Big(\kappa(\theta ) + \pi(\theta)\Big).
\label{eq:deff0}
\end{equation}
Using the expressions (\ref{eq:expressionforkappa}) and (\ref{eq:expressionforpi})  for $\kappa(\theta)$ and $\pi(\theta)$ in terms of the $q$-digamma function, we have
\begin{multline*}
f_0(Z) = \frac{1-q}{\log(q)} \bigg(  \frac{R}{q}\Big[  \Psi_q(Z+1)+\log(1-q) - Z\Psi_q'(\theta+1)  \\
 + \frac{\Psi_q''(\theta+1)}{\log q} \left( \frac{(1-\alpha)^2}{\alpha}\frac{\log(1-q^Z)}{\log(q)} + Z (1-\alpha)\right) \Big] \\
 -L \Big[   \Psi_q(Z)+\log(1-q) - Z\Psi_q'(\theta) + \frac{\Psi_q''(\theta)}{\log q} \left( \frac{(1-\alpha)^2}{\alpha}\frac{\log(1-q^Z)}{\log(q)} + Z (1-\alpha)\right) \Big]  \bigg),
\end{multline*}
with $\alpha=q^{\theta}$. For the derivatives, we have
\begin{multline}
f_0'(Z) =  \frac{1-q}{\log(q)} \frac{R}{q} \left[  \Psi_q'(Z+1)-\Psi_q'(\theta+1) + \frac{\Psi_q''(\theta+1)}{\log(q)} \left(  (1-\alpha)-\frac{(1-\alpha)^2}{\alpha}\frac{q^Z}{1-q^Z} \right)\right]\\
- \frac{1-q}{\log(q)} L  \left[  \Psi_q'(Z)-\Psi_q'(\theta) + \frac{\Psi_q''(\theta)}{\log(q)} \left(  (1-\alpha)-\frac{(1-\alpha)^2}{\alpha}\frac{q^Z}{1-q^Z} \right)\right],
\label{eq:f0primegeneral}
\end{multline}
\begin{multline*}
f_0''(Z) = \frac{1-q}{\log(q)} \frac{R}{q} \left[  \Psi_q''(Z+1) -\frac{q^Z}{(1-q^Z)^2} \frac{(1-\alpha)^2}{\alpha} \Psi_q''(\theta+1) \right]\\
- \frac{1-q}{\log(q)} L \left[  \Psi_q''(Z) -\frac{q^Z}{(1-q^Z)^2} \frac{(1-\alpha)^2}{\alpha} \Psi_q''(\theta) \right].
\end{multline*}
Notice that the formulas become much simpler in the special case of Remark \ref{rem:ratiosimpler}. Using the fact that $\Psi_q'(Z) - \Psi_q'(Z+1) = \log(q)^2 \frac{q^Z}{(1-q^Z)^2}$, one has
\begin{equation}
f_0'(Z) = \frac{(1-q) \log(q)}{(1+q)(1-\alpha)^2} \left(\frac{q^Z}{1-q^Z} \left( 1-\alpha^2 - \frac{(1-\alpha)^2}{1-q^Z}\right) -\alpha^2\right).
\label{eq:simplecasef0prime}
\end{equation}

One readily verifies that $f_0'(\theta)=f_0''(\theta)=0$.
Since the saddle-point is at $\theta$, we need to deform the integration contours for the variables $Z$ and $W$ so that they pass through $\theta$ and control the real part of $f_0$ along these contours. Let $\mathcal{C}_{\alpha}$ be the positively oriented contour enclosing $0$ defined by its parametrization
\begin{equation}
W(u) :=\log_q( 1-(1-\alpha)e^{iu})
\label{eq:defparametrization}
\end{equation}
 for $u\in(-\pi, \pi)$. Hence $q^{W(u)}$ ranges in a circle of radius $(1-\alpha)$ centered at $1$ (see Figure \ref{fig:contoursmallvariables}). In order to use $\mathcal{C}_{\alpha}$ as the contour for $W$ in the definition of the Fredholm determinant $\det(I+K_{x})$, one should not encounter any singularities of the kernel when deforming the contour. Hence $\mathcal{C}_{\alpha}$ should not enclose $-1$ (this is the equivalent with the fact that the contour $C_1$ in Theorem \ref{th:fredholmgeneral} must not enclose $1/q$.) For the rest of this section, we impose the condition
\begin{equation}\label{eq:twoalphaq}
2-\alpha < 1/q,
\end{equation}
 so that our contour deformation is valid.

\begin{figure}
\begin{center}
\begin{tikzpicture}[scale=2]
\draw[->, thick] (-1, 0) -- (3,0);
\draw[->, thick] (0,-1) -- (0,1);
\draw[thick] (0,0) circle(0.6);
\draw[thick] (1,0) circle(0.4);
\draw (1,-0.05) -- (1,0.05) node[above] {$1$};
\draw (0.5,-0.1) node{$\alpha$};
\draw (2,-0.05) -- (2,0.05) node[above] {$1/q$};
\fill (120:0.6) circle(0.03) node[above left]{$z=q^Z$};
\fill (1,0.4) circle(0.03) node[above]{$w=q^W $};
\draw[dashed] (0,0) -- (1,0.4);
\fill[gray] (0.5,0.2) circle(0.03) node[left]{$qw$};
\end{tikzpicture}
\end{center}
\caption{Images of the contours $\mathcal{C}_{\alpha}$ and $\mathcal{D}_{\alpha}$ by the map $Z\mapsto q^Z$. The condition $\alpha >2q/(1+q)$ is such that $qw$ is always inside the image of $\mathcal{D}_{\alpha}$, which is the case in the figure. }
\label{fig:contoursmallvariables}
\end{figure}
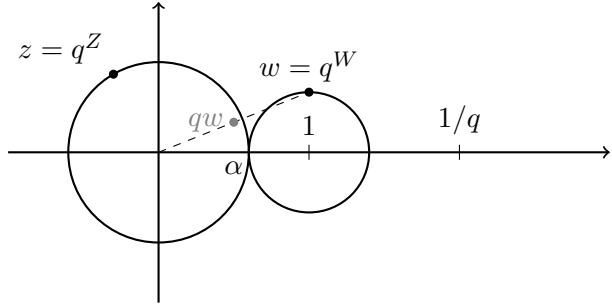

When deforming the contour for the variable $W$, one also have to deform the contour for the variable $Z$, since in the original definition of $K_{\zeta}$ in Equation (\ref{eq:kernelMADM}), the only singularities of the integrand for the variable $s$ are for $s\in\Z$. This means that the singularities at $W+1, W+2, \dots$ for the variable $Z$ must be on the right of the contour for $Z$.
Let us choose the contour $\mathcal{D}_{\alpha}$ being the straight line parametrized by $Z(u) := \theta+iu$ for $u$ in $\mathbb{R}$.
To ensure that $\Real[W+1] >\theta$, or equivalently that $\vert qw \vert <\alpha $ (see Figure \ref{fig:contoursmallvariables}), we impose the condition that
\begin{equation}
\alpha >\frac{2q}{1+q}.
\label{eq:restrictivecondition}
\end{equation}
Condition (\ref{eq:restrictivecondition}) implies in particular the previous condition $2-\alpha < 1/q$.
\begin{remark}
Condition (\ref{eq:restrictivecondition}) is  the same as condition (2.15) in \cite{veto2014tracy}. To get rid of this condition, one would need to add small circles around each pole in $W+1, W+2, \dots$ in the definition of the contour $\mathcal{D}$, as in \cite{ferrari2013tracy}. The rest of the asymptotic analysis would remain almost unchanged provided one is able to prove that for any $W\in \mathcal{C}_{\alpha}$ and $k\geqslant 1$ such that $\vert q^{W+k}\vert>\alpha$, $\Real [ f_0(W)- f_0(W+k) ] >0$.
In our case, it appears that the analysis of $\Real [ f_0(W)- f_0(W+k) ]$ is computationally difficult and we do not pursue that here.
\label{rem:restrictivecondition}
\end{remark}

One notices that $\Real[f_0]$ is periodic with a period $i\frac{2\pi}{\log q}$. Moreover, $f_0(\overline{Z}) = \overline{f_0(Z)}$ so that  $\Real[f_0]$ is determined by its restriction on the domain $\R+i[0, -\pi/\log q]$.
The following results about the behaviour of $\Real[f_0]$ along the contours are proved in Section \ref{subsec:technicalproofs}.
\begin{lemma}
For any $R >L\geqslant 0$ with $R+L=1$, we have $f_0'''(\theta)>0$.
\label{lem:thirdderivative}
\end{lemma}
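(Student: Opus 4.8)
The plan is to insert the series representation \eqref{eq:digammaderivatives} of the $q$-digamma derivatives into the formula for $f_0'''$ and reduce the claim to the sign of an explicit series. Specializing \eqref{eq:thirdderivativegeneral} to $\nu=q$ (so $V=1$), evaluating at $Z=\theta$, and using the telescoping identity $\Psi_q^{(k)}(\theta)-\Psi_q^{(k)}(\theta+1)=(\log q)^{k+1}\sum_{n\geq1}n^k\alpha^n$ together with $\sum_{n\geq1}n\alpha^n=\alpha/(1-\alpha)^2$ and $\sum_{n\geq1}n^2\alpha^n=\alpha(1+\alpha)/(1-\alpha)^3$, one obtains, with $\alpha:=q^\theta\in(0,1)$ and $n^\ast:=\tfrac{1+\alpha}{1-\alpha}$,
\begin{equation}\label{eq:sketchf0}
f_0'''(\theta)=(1-q)(\log q)^3\,S,\qquad S:=\sum_{n\geq1}\frac{n^2\alpha^n}{1-q^n}\,(n-n^\ast)\,(Rq^{n-1}-L),
\end{equation}
all series converging geometrically. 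Since $(1-q)(\log q)^3<0$, proving $f_0'''(\theta)>0$ is the same as proving $S<0$ (equivalently, strict concavity $j''(\rho)<0$ of the current, since $\rho(\theta)=1-\alpha$ in the MADM case). Note that the argument below uses no hypothesis on $\theta$ beyond $\theta>0$, so the restriction $\kappa(\theta)\geq0$ is not needed here.

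The key point is that $n^\ast$ is exactly the mean of $n$ under the probability weights proportional to $n\alpha^n$ on $\Zgzero$ (by the two closed forms above). I would then write $S=R\,S_R-L\,S_L$ with $S_R=\sum_n\frac{n^2\alpha^nq^{n-1}}{1-q^n}(n-n^\ast)$ and $S_L=\sum_n\frac{n^2\alpha^n}{1-q^n}(n-n^\ast)$, and factor $\frac{n^2\alpha^nq^{n-1}}{1-q^n}=(n\alpha^n)\,w_R(n)$, $\frac{n^2\alpha^n}{1-q^n}=(n\alpha^n)\,w_L(n)$, where $w_R(n)=\tfrac{nq^{n-1}}{1-q^n}$ and $w_L(n)=\tfrac{n}{1-q^n}$. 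Then $S_R=Z_R(\EE_{\mu_R}[n]-n^\ast)$ and $S_L=Z_L(\EE_{\mu_L}[n]-n^\ast)$ with $Z_R,Z_L>0$, where $\mu_R$ (resp.\ $\mu_L$) is obtained from the weights $\propto n\alpha^n$ by tilting with $w_R$ (resp.\ $w_L$). By Chebyshev's correlation (FKG) inequality, tilting by a strictly decreasing function strictly lowers the mean of $n$, and tilting by a strictly increasing function strictly raises it; hence $\EE_{\mu_R}[n]<n^\ast<\EE_{\mu_L}[n]$, i.e.\ $S_R<0$ and $S_L>0$. Since $R>0$ and $L\geq0$, this gives $S=R\,S_R-L\,S_L<0$, and \eqref{eq:sketchf0} yields $f_0'''(\theta)>0$.

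The only step requiring real (but elementary) work — and where I expect the effort to go — is the monotonicity of $w_R$ and $w_L$. Using $1-q^n=(1-q)(1+q+\cdots+q^{n-1})$, the inequality $w_R(n+1)<w_R(n)$ rewrites as $q+q^2+\cdots+q^n<n$, which is immediate since each summand lies in $(0,1)$. The inequality $w_L(n+1)>w_L(n)$ rewrites as $q^n(n+1-nq)<1$; putting $h(q):=q^n(n+1-nq)$, one has $h(1)=1$ and $h'(q)=n(n+1)q^{n-1}(1-q)>0$ on $(0,1)$, so $h(q)<1$ there. Everything else — convergence of the series, $Z_R,Z_L>0$, and finiteness of the moments needed for the correlation inequality — is routine. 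The same scheme, with $\nu$ replacing $q$ in $w_R,w_L$ and with base weights $\propto\tfrac{n\alpha^n(1-\nu^n)}{1-q^n}$, proves Lemma~\ref{lem:thirdderivativegeneral} for $\nu\in(0,1)$, the case $\nu=0$ following by continuity.
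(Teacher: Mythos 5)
Your proof is correct, and it takes a genuinely different route from the paper's. The paper reduces to Lemma~\ref{lem:thirdderivativegeneral} and proves that general statement by exploiting linearity of \eqref{eq:thirdderivativegeneral} in $R$, checking the two endpoints $R=1$ and $R=0$ separately: there it rewrites the difference quotient via the Cauchy mean value theorem as $\Psi_q'''(\tilde\theta)/\Psi_q''(\tilde\theta)$ for some $\tilde\theta\in(\theta,\theta+V)$, and then establishes that $\theta\mapsto\Psi_q'''(\theta)/\Psi_q''(\theta)$ is strictly increasing by proving the power-series inequality $\Psi_q''''\Psi_q''>(\Psi_q''')^2$ coefficient by coefficient. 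You instead specialize to $\nu=q$, exploit the clean telescoping $\Psi_q^{(k)}(\theta)-\Psi_q^{(k)}(\theta+1)=(\log q)^{k+1}\sum_{n\geq1}n^k\alpha^n$ to collapse $f_0'''(\theta)$ into the single explicit series $S$ (I verified the algebra; \eqref{eq:sketchf0} and the identification of $n^\ast$ as the mean under weights $\propto n\alpha^n$ are both correct), and then read off the sign of $S_R$ and $S_L$ from the Chebyshev correlation inequality after checking that $w_R$ is strictly decreasing and $w_L$ strictly increasing (your two elementary reductions, $q+\cdots+q^n<n$ and $q^n(n+1-nq)<1$, are both correct). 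Both proofs are strict and both in fact cover the whole segment $R\in[0,1]$ with $R+L=1$ (in your final line $R>0$ is not actually needed: $R=0$ gives $S=-S_L<0$). Your approach buys a shorter, more conceptual argument and a probabilistic reading of the sign ($j''(\rho)<0$ as a monotone-tilting/covariance statement); the paper's approach is written once for general $\nu$ and avoids introducing the probabilistic packaging. Your closing remark on extending to general $\nu$ is also sound: with base weights $\propto n\alpha^n(1-\nu^n)/(1-q^n)$ and $w_R(n)=n\nu^{n-1}/(1-\nu^n)$, $w_L(n)=n/(1-\nu^n)$, the same monotonicity and Chebyshev argument go through, and $\nu=0$ is handled directly or by continuity.
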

\begin{proof}
This is a particular case ($\nu=q$) of Lemma \ref{lem:thirdderivativegeneral}, which we prove in Section \ref{sec:sixtwo}.
\end{proof}

\begin{proposition}
Assume that (\ref{eq:twoalphaq}) holds. For any $R >L\geqslant 0$ with $R+L=1$, the contour $\mathcal{C}_{\alpha}$ is steep-descent for the function $-\Real [f_0]$ in the following sense: the function $u\mapsto \Real [f_0(W(u))]$ is increasing for $u\in[0, \pi]$ and decreasing for $u\in [-\pi, 0]$.
\label{prop:steepdescentC}
\end{proposition}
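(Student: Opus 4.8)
The plan is to reduce the claim to a tractable sign analysis by computing the derivative $\frac{d}{du}\Real[f_0(W(u))]$ directly along the parametrization $W(u)=\log_q(1-(1-\alpha)e^{iu})$. Writing $z(u)=q^{W(u)} = 1-(1-\alpha)e^{iu}$, so that $z'(u) = -i(1-\alpha)e^{iu} = i(z(u)-1)$, one gets $W'(u) = \frac{z'(u)}{z(u)\log q} = \frac{i(z(u)-1)}{z(u)\log q}$, and hence
\[
\frac{d}{du}\Real[f_0(W(u))] = \Real\!\left[ f_0'(W(u))\, W'(u) \right] = \Real\!\left[ f_0'(W(u))\,\frac{i(z(u)-1)}{z(u)\log q} \right].
\]
First I would substitute the explicit formula \eqref{eq:f0primegeneral} for $f_0'$ in the case $\nu=q$ (equivalently, use \eqref{eq:simplecasef0prime} in the simpler subcase $R=qL$ and the general formula otherwise). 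Using $\Psi_q'(Z)-\Psi_q'(Z+1)=(\log q)^2 q^Z/(1-q^Z)^2$ and $\Psi_q''(Z)-\Psi_q''(Z+1)$-type identities from \eqref{eq:digammaderivatives}, $f_0'(W(u))$ becomes a rational function of $z=z(u)$ (plus, in the general $R,L$ case, a residual combination of $\Psi_q'(Z+1)-\Psi_q'(\theta+1)$ and $\Psi_q'(Z)-\Psi_q'(\theta)$ which can be handled by the series representation \eqref{eq:digammaderivatives}). Multiplying by $i(z-1)/(z\log q)$ and taking the real part should leave, after clearing the common positive denominator $|z|^2|1-z|^{\cdots}$, a trigonometric polynomial in $u$ (once $z=1-(1-\alpha)e^{iu}$ is inserted) whose sign I must show is nonnegative on $[0,\pi]$ and nonpositive on $[-\pi,0]$.

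The key structural facts I would exploit are: (i) by the symmetry $f_0(\overline Z)=\overline{f_0(Z)}$ the function $u\mapsto \Real[f_0(W(u))]$ is even, so it suffices to treat $u\in[0,\pi]$; (ii) $u=0$ (i.e. $z=\alpha=q^\theta$, the saddle point) is a critical point, since $f_0'(\theta)=0$, and $u=\pi$ (i.e. $z=2-\alpha$) should also be a critical point of the restriction by the reflection symmetry of the circle; (iii) Lemma \ref{lem:thirdderivative} gives $f_0'''(\theta)>0$, which pins down the local behaviour at $u=0$ (a local minimum of $\Real[f_0\circ W]$, consistent with "increasing for $u\in[0,\pi]$"). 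So the real work is to show the derivative $\frac{d}{du}\Real[f_0(W(u))]$ has no interior zero on $(0,\pi)$. The cleanest route is to write that derivative as $\alpha$-independent-denominator times a numerator $P(u)$, factor out the obvious $\sin u$ (vanishing at $0$ and $\pi$), and prove the remaining factor is strictly positive on $(0,\pi)$; this is where I would use the condition \eqref{eq:twoalphaq}, $2-\alpha<1/q$, together with $0<q<1$, $0<\alpha<1$, $R>L\geqslant 0$, $R+L=1$, to bound the relevant coefficients. The ratio $R/q - L = R/q - (1-R)$ enters with a definite sign because $R>L$ forces $R>1/2$ and $R/q>1>L$, so the combination $\frac{R}{q}\Psi_q''(\theta+1)-L\Psi_q''(\theta)$ (which is $\propto \kappa(\theta)\cdot(\text{positive})$ and is $\geqslant 0$ under the hypothesis $\kappa(\theta)\geqslant 0$ carried from the theorem) has a controllable sign; I would isolate this combination as a single positive constant multiplying a universal rational function of $z$.

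The main obstacle I anticipate is the general $R,L$ case (as opposed to the simpler $R=qL$ case of Remark \ref{rem:ratiosimpler}): there $f_0'$ is not purely rational in $z$ but carries the genuine digamma differences $\Psi_q'(Z+1)-\Psi_q'(\theta+1)$ and $\Psi_q'(Z)-\Psi_q'(\theta)$, so $\Real[f_0'(W(u))W'(u)]$ is an infinite sum (via \eqref{eq:digammaderivatives}, $\Psi_q'(Z)=(\log q)^2\sum_{n\geqslant1} n q^{nZ}/(1-q^n)$) rather than a finite trigonometric polynomial. I would handle this term-by-term: for each $n\geqslant 1$ the contribution is $\propto \frac{n}{1-q^n}\Real\big[(q^{nW(u)}-q^{n\theta})\,i(z-1)/z\big]$, and I would show each such term, suitably weighted by the positive constants $R/q$ and $L$, contributes with the right sign on $(0,\pi)$ — or, failing a clean termwise sign, interpolate between the endpoint cases $L=0$ ($R=1$) and $R=qL$ for which the finite rational computation is explicit, exactly as the authors announce in the introduction ("we use an interpolation between cases for which formulas are manageable"). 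As a sanity check throughout I would verify the endpoint values: at $u=0$ the derivative vanishes (to second order, by $f_0''(\theta)=0$) and the sign of the leading term is governed by $f_0'''(\theta)>0$ from Lemma \ref{lem:thirdderivative}, confirming the asserted monotonicity direction.
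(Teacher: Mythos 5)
Your plan matches the paper's proof in its essential structure: you compute the derivative $\frac{d}{du}\Real[f_0(W(u))]$ along the parametrization, exploit the linear dependence of $f_0'$ on $R$ (at fixed $R+L=1$) to interpolate between the two endpoint cases $R=1$ and $R=qL$ (i.e. $R=q/(1+q)$), handle the totally asymmetric case $R=1$ via the series representation of the $q$-digamma function and a termwise sign analysis in which the factor $1-(2-\alpha)q^k$ is controlled by the hypothesis $2-\alpha<1/q$, and observe that the $R=qL$ case reduces to a short closed-form computation thanks to the simplification noted in Remark \ref{rem:ratiosimpler}. This is precisely what the paper does; the only ingredient you do not fully anticipate is the specific algebraic simplification (obtained "by Mathematica") that produces the explicit nonnegative factor $h(\alpha,q^k,u)$ in the $R=1$ case.

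One small slip worth flagging: you write that $\tfrac{R}{q}\Psi_q''(\theta+1)-L\Psi_q''(\theta)$ "is $\propto\kappa(\theta)\cdot(\text{positive})$ and is $\geqslant 0$ under the hypothesis $\kappa(\theta)\geqslant 0$." From \eqref{eq:expressionforkappa} with $\nu=q$, the proportionality constant $\tfrac{1-q}{\log q}\big/\big(\Psi_q'(\theta)-\Psi_q'(\theta+1)\big)$ is \emph{negative} (since $\log q<0$ and $\Psi_q'(\theta)-\Psi_q'(\theta+1)=(\log q)^2 q^\theta/(1-q^\theta)^2>0$), so $\kappa(\theta)\geqslant 0$ forces that combination to be $\leqslant 0$, not $\geqslant 0$. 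This does not affect the viability of the overall strategy — the paper in fact does not isolate this combination but substitutes the series for $\Psi_q'$ into $f_0'$ directly — but if you were to pursue your proposed factorization you would need to carry the correct sign.
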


\begin{proposition}
Assume that (\ref{eq:twoalphaq}) holds. For any $R >L\geqslant 0$ with $R+L=1$, the contour $\mathcal{D}_{\alpha}$ is steep-descent for the function $\Real [f_0]$ in the following sense: the function $t\mapsto \Real [f_0(Z(u))]$ is decreasing for $u\in[0, -\pi/ \log q]$ and increasing for $u\in [\pi/\log q, 0]$.
\label{prop:steepdescentD}
\end{proposition}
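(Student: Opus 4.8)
The plan is to parametrize $\mathcal{D}_{\alpha}$ by $Z(u) = \theta + iu$ and compute the derivative of $u \mapsto \Real[f_0(Z(u))]$, then show this derivative has the claimed sign on the fundamental domain $u \in [0, -\pi/\log q]$ (the rest following by the periodicity $\Real[f_0(Z)] = \Real[f_0(Z + 2\pi i/\log q)]$ and the symmetry $f_0(\overline Z) = \overline{f_0(Z)}$ noted in the text). By the chain rule, $\frac{d}{du}\Real[f_0(Z(u))] = \Real[i f_0'(Z(u))] = -\Imag[f_0'(\theta + iu)]$, so the task reduces to proving $\Imag[f_0'(\theta+iu)] > 0$ for $u$ in the interior of the fundamental domain. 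First I would use the explicit formula \eqref{eq:f0primegeneral} for $f_0'$: since $\Psi_q'(Z) = \log(q)^2\sum_{n\geq 1} \frac{n q^{nZ}}{1-q^n}$ (from \eqref{eq:digammaderivatives}) and $\Psi_q''(\theta+1), \Psi_q''(\theta)$ are real constants, the imaginary part of $f_0'(\theta+iu)$ breaks into (a) a series coming from $\Psi_q'(Z+1) - \Psi_q'(\theta+1)$ and $\Psi_q'(Z) - \Psi_q'(\theta)$, and (b) a term coming from $\frac{q^Z}{1-q^Z}$. Writing $q^Z = \alpha q^{iu} = \alpha e^{iu\log q}$ and $\varphi := u\log q \in (-\pi, 0)$ for $u$ in the fundamental domain, each piece becomes an explicit trigonometric/rational function of $\varphi$.

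The cleanest route is probably to return to the form \eqref{eq:deff0general}/\eqref{eq:deff0} before differentiating and use the series representation of $\Psi_q$ directly, or better, to exploit the structure: $f_0'(Z)$ is, up to the constant shift, a positive-coefficient combination of $\Psi_q'(Z+1)$, $\Psi_q'(Z)$ and $\frac{q^Z}{1-q^Z}$. Concretely, from \eqref{eq:f0primegeneral}, collecting terms, one finds
\[
f_0'(Z) = \frac{1-q}{\log q}\left[ \left(\tfrac{R}{q}\Psi_q'(Z+1) - L\Psi_q'(Z)\right) + \left(\tfrac{R}{q}\Psi_q''(\theta+1) - L\Psi_q''(\theta)\right)\frac{1}{\log q}\left((1-\alpha) - \tfrac{(1-\alpha)^2}{\alpha}\tfrac{q^Z}{1-q^Z}\right)\right].
\]
By Lemma \ref{lem:thirdderivative} (equivalently by the computation of $\kappa(\theta) \geq 0$ and the sign analysis behind Lemma \ref{lem:thirdderivativegeneral}), the coefficient $\tfrac{R}{q}\Psi_q''(\theta+1) - L\Psi_q''(\theta)$ is negative (it equals $\kappa(\theta)\log(q)/(1-q)$ times $(\Psi_q'(\theta)-\Psi_q'(\theta+1))$ up to positive factors — recall $\Psi_q'' < 0$, and $\kappa \geq 0$ forces this combination negative since $\Psi_q'(\theta) - \Psi_q'(\theta+1) > 0$); so after multiplying through by $\frac{1-q}{\log q} < 0$, this term contributes $+\,(\text{positive const})\cdot \Imag\!\left[-\tfrac{q^Z}{1-q^Z}\right]$ and one checks directly that $\Imag\!\left[-\tfrac{q^Z}{1-q^Z}\right] = \Imag\!\left[\tfrac{1}{1-q^Z}\right]\cdot(\text{sign})$; writing $q^Z = \alpha e^{i\varphi}$, $\Imag\tfrac{1}{1 - \alpha e^{i\varphi}} = \tfrac{\alpha\sin\varphi}{|1-\alpha e^{i\varphi}|^2}$, which has a definite sign for $\varphi \in (-\pi,0)$. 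For the $\Psi_q'$ part, I would use $\Psi_q'(Z+1) - \Psi_q'(\theta+1) = \log(q)^2\sum_{n\geq1}\tfrac{n q^n}{1-q^n}(q^{nZ} - \alpha^n)$ and similarly for $\Psi_q'(Z)$, take imaginary parts term-by-term, and show the resulting series (a combination with coefficients $R/q$ and $-L$) has the correct sign — using that $\Imag[q^{nZ}] = \alpha^n \sin(n\varphi)$ and standard positivity of trigonometric sums of the form $\sum c_n \sin(n\varphi)$ with monotone $c_n$, or by grouping with the rational term.

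The main obstacle I anticipate is the sign control of the $\Psi_q'$-series contribution when $L > 0$: unlike the totally asymmetric case, the combination $\tfrac{R}{q}(\Psi_q'(Z+1)-\Psi_q'(\theta+1)) - L(\Psi_q'(Z)-\Psi_q'(\theta))$ is not manifestly sign-definite term-by-term, because the $R$ and $L$ pieces enter with opposite signs. The remedy — as flagged in the introduction's list of technical novelties ("we use an interpolation between cases for which formulas are manageable (cases $L=0=1-R$ and $R=qL$)") — is to handle the two endpoint cases first: the case $L = 0$ reduces to the known $q$-TASEP/$q$-Hahn TASEP steep-descent analysis of \cite{veto2014tracy}, and the case $R = qL$ gives the drastic simplification of Remark \ref{rem:ratiosimpler}/\eqref{eq:simplecasef0prime}, where $f_0'(Z) = \tfrac{(1-q)\log q}{(1+q)(1-\alpha)^2}\left(\tfrac{q^Z}{1-q^Z}(1-\alpha^2 - \tfrac{(1-\alpha)^2}{1-q^Z}) - \alpha^2\right)$ and the imaginary part is an explicit rational function of $e^{i\varphi}$ whose sign is checked by elementary algebra. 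Then I would write the general $(R,L)$ case as a convex-type combination / interpolation of these two, under the condition \eqref{eq:twoalphaq} $2 - \alpha < 1/q$ (ensuring no singularity is crossed and $|q^Z| = \alpha$ keeps $1/(1-q^Z)$ bounded), verifying that steep-descent is preserved along the interpolation since the derivative $\frac{d}{du}\Real[f_0(Z(u))]$ depends affinely on the pair $(R/q, L)$ after extracting common positive factors. The endpoint $u = -\pi/\log q$ (i.e. $\varphi = -\pi$, $q^Z = -\alpha$) must be checked separately to confirm it is the unique minimum, which is immediate since $\sin\varphi = 0$ there and one computes $\Real[f_0]$ is stationary and lower than at interior points by the monotonicity just established.
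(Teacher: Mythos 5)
Your proposed strategy is essentially identical to the paper's proof: compute the derivative of $\Real[f_0]$ along the vertical contour, exploit linearity of $f_0'$ in $(R,L)$ for fixed $\theta$, verify the sign directly at the two endpoints $L=0$ and $R=qL$ (the latter via the simplification of Remark~\ref{rem:ratiosimpler} and \eqref{eq:simplecasef0prime}), and conclude by linear interpolation over $R\in[q/(1+q),1]$, with condition \eqref{eq:twoalphaq} ensuring positivity term-by-term in the $L=0$ series. The only cosmetic difference is that you propose to invoke \cite{veto2014tracy} for the $L=0$ endpoint whereas the paper performs the explicit series computation culminating in \eqref{eq:sumarranged}; the overall structure matches.
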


We are now able to prove that asymptotically, the contribution to the Fredholm determinant of the contours are negligible outside a neighbourhood of $\theta$.

\begin{proposition}
For any fixed $\delta >0$ and $\epsilon>0$, there exists a real $t_0$ such that for all $t>t_0$
$$ \big\vert \det(I+K_x)_{\mathbb{L}^2(\mathcal{C}_{\alpha}) } - \det(I+K_{x,\delta})_{\mathbb{L}^2(\mathcal{C}_{\alpha, \delta})}\big\vert <\epsilon$$
\label{prop:kernellocalization}
where $\mathcal{C}_{\alpha, \delta}$ is the intersection of $\mathcal{C}_{\alpha}$ with the ball $B(\theta,\delta)$ of radius $\delta$ around $\theta$, and
\begin{multline*}
K_{x, \delta}(W,W') =
\frac{q^W \log q}{2\pi i}\int_{\mathcal{D}_{\delta}} \frac{\pi}{\sin(-\pi(Z-W))} \\ \times \exp\left(t\big(f_0(Z) - f_0(W)\big) - t^{1/3} \sigma(\theta) \log(q) x (Z-W) \right) \frac{1}{q^Z - q^{W'}} \frac{(q^{Z+1} ; q)_{\infty}}{(q^{W+1} ; q)_{\infty}}dZ,
\end{multline*}
where $\mathcal{D}_{\delta} = \mathcal{D}\cap B(\theta,\delta)$.
\end{proposition}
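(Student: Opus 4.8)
The statement is the standard ``localization'' step of a steepest-descent analysis, and I would prove it in two stages: (i) truncate the inner contour $\mathcal{D}$ to $\mathcal{D}_\delta$ inside the kernel, showing that each kernel entry changes by $o(1)$ (in fact by $O(e^{-ct})$ in the bulk and by a carefully cut-off tail) uniformly over $W,W'$ in a fixed neighbourhood of $\theta$; (ii) restrict the operator from $\mathbb{L}^2(\mathcal{C}_\alpha)$ to $\mathbb{L}^2(\mathcal{C}_{\alpha,\delta})$, showing the Fredholm determinant changes by $O(e^{-ct})$. Both kernels are smooth on the finite contour $\mathcal{C}_\alpha$ (resp. $\mathcal{C}_{\alpha,\delta}$), hence trace class, so the determinants are well defined; the passage from pointwise kernel bounds to determinant bounds is then the standard Lipschitz estimate for Fredholm determinants in trace (or Hilbert--Schmidt) norm, or equivalently term-by-term control of the series $\sum_n \frac{1}{n!}\int \det[K(W_i,W_j)]\prod dW_i$ via Hadamard's inequality. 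Throughout I would factor the exponential appearing in $K_x$ as $e^{F(Z)}/e^{F(W)}$ with $F(Y):=t f_0(Y) - t^{1/3}\sigma(\theta)\log(q)\,xY$, and pull the row factors $\prod_i q^{W_i}e^{-F(W_i)}(q^{W_i+1};q)_\infty^{-1}$ out of $\det[K_x(W_i,W_j)]$. The standing conditions \eqref{eq:twoalphaq} and \eqref{eq:restrictivecondition} are used exactly as in Propositions \ref{prop:steepdescentC} and \ref{prop:steepdescentD}, i.e.\ to guarantee the straight contours can be used without crossing residues.

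\textbf{Step (ii).} Since $\mathcal{C}_\alpha$ is compact, Proposition \ref{prop:steepdescentC} gives $c_1=c_1(\delta)>0$ with $\Real f_0(W)\geqslant \Real f_0(\theta)+c_1$ for $W\in\mathcal{C}_\alpha\setminus B(\theta,\delta)$, while $\Real f_0(W)\geqslant \Real f_0(\theta)$ everywhere on $\mathcal{C}_\alpha$. By Proposition \ref{prop:steepdescentD} and the periodicity of $\Real f_0$ in the imaginary direction, $\Real f_0(Z)\leqslant \Real f_0(\theta)$ on $\mathcal{D}_\alpha$, so that $|e^{F(Z)}|\leqslant e^{t\Real f_0(\theta)}e^{O(t^{1/3})}$ and $|e^{-F(W)}|\leqslant e^{-t\Real f_0(\theta)}e^{O(t^{1/3})}$; together with the integrability of $\pi/\sin(-\pi(Z-W))$ along $\mathcal{D}_\alpha$, Hadamard's inequality yields $|\det[K_x(W_i,W_j)]|\leqslant (C\sqrt n)^n$ uniformly in $t$, so the Fredholm series converges uniformly. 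Any term of that series involving at least one $W_i\in\mathcal{C}_\alpha\setminus B(\theta,\delta)$ then acquires an extra factor $e^{-c_1 t}$ from the corresponding row factor, and summing over the (uniformly convergent) series gives that the contribution of such terms is $O(e^{-c_1 t/2})$. This is step (ii).

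\textbf{Step (i).} Fix $W,W'\in\mathcal{C}_{\alpha,\delta}$ and estimate $\int_{\mathcal{D}_\alpha\setminus\mathcal{D}_\delta}$. Using $\Real f_0(W)\geqslant \Real f_0(\theta)\geqslant \Real f_0(Z)$ one has $|e^{t(f_0(Z)-f_0(W))}|\leqslant 1$, so the integrand is bounded by $C\,|\pi/\sin(-\pi(Z-W))|\,e^{O(t^{1/3})}$. Split $\mathcal{D}_\alpha\setminus\mathcal{D}_\delta$ according to $|\Imag Z|$: on the moderate range $\delta\leqslant|\Imag Z|\leqslant M$, away from the periodic maxima of $\Real f_0$ at $\Imag Z\in\frac{2\pi}{|\log q|}\Z\setminus\{0\}$, Proposition \ref{prop:steepdescentD} gives a uniform gap $\Real f_0(Z)\leqslant \Real f_0(\theta)-c_2$, so that part of the kernel is $O(e^{-c_2 t}e^{O(t^{1/3})})=o(1)$; near each of the finitely many periodic maxima inside $[-M,M]$ one uses instead the decay $|\pi/\sin(-\pi(Z-W))|\leqslant C e^{-\pi|\Imag Z|}$; and on the tail $|\Imag Z|\geqslant M$ one again uses this decay, giving a bound $C e^{-\pi M}e^{O(t^{1/3})}$. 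Choosing the cutoff $M=M(t)$ growing fast enough (e.g.\ $M(t)=t^{1/2}$) makes all three pieces $o(1)$ uniformly in $W,W'\in\mathcal{C}_{\alpha,\delta}$, hence $K_x-K_{x,\delta}$ is $o(1)$ in Hilbert--Schmidt norm on $\mathbb{L}^2(\mathcal{C}_{\alpha,\delta})$; continuity of Fredholm determinants finishes step (i), and combining with (ii) proves the proposition.

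\textbf{Main obstacle.} The genuine difficulty is the non-compactness of $\mathcal{D}_\alpha$ together with the fact that $\Real f_0$ is periodic in the imaginary direction, so it does not tend to $-\infty$ along $\mathcal{D}_\alpha$ and the steep-descent property of Proposition \ref{prop:steepdescentD} alone does not localize the $Z$-integral: one must play the exponential decay of the Mellin--Barnes factor $\pi/\sin(-\pi(Z-W))$ against the (sub-exponential in $t$ but unbounded) factor $e^{-t^{1/3}\sigma(\theta)\log(q)\,x(Z-W)}$, and calibrate the cutoff $M(t)$ so that it kills the latter while the steep-descent bounds on $\{|\Imag Z|\leqslant M(t)\}$ still apply near each of the finitely many periodic maxima of $\Real f_0$. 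A secondary technical point is reconciling the various crude exponential bounds on individual kernel entries with the uniform-in-$t$ convergence of the Fredholm series, which is precisely what the factorization of the row factors followed by Hadamard's inequality achieves. The routine but lengthy bookkeeping parallels the analogous estimates in \cite{ferrari2013tracy, veto2014tracy, borodin2012free, barraquand2014phase}, to which I would refer for the details not written out here.
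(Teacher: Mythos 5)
Your overall scheme (first truncate the $W$-contour, then truncate the $Z$-contour, then pass from kernel bounds to determinant bounds via Hadamard/Lipschitz estimates) mirrors the paper's. However, step (i) has a genuine gap in the treatment of the non-zero periodic maxima of $\Real f_0$ on $\mathcal D_\alpha$. Near $Z=\theta+2\pi i k/\log q$ with $k\neq 0$ you use only the sine decay $|\pi/\sin(\cdot)|\leqslant Ce^{-\pi|\Imag Z|}$ and the crude bound $|e^{t(f_0(Z)-f_0(W))}|\leqslant 1$. Integrating over a $\delta$-interval $I_k$ around each such maximum gives a contribution of order $\delta e^{-2\pi^2|k|/|\log q|}$ per $k$, and summing the geometric series yields a quantity that is $O(1)$ (or, with the extra factor $e^{O(t^{1/3})}$ you carry, even diverging) rather than $o(1)$. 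The cutoff $M(t)=t^{1/2}$ kills the far tail but does nothing for the finitely many (in fact, $\sim M$) periodic maxima inside $[-M,M]$, so the claim that ``all three pieces are $o(1)$'' is false. The paper closes exactly this gap by a change of variables $Z=\theta+2\pi i k/\log q+zt^{-1/3}$ on each $I_k$: by periodicity the exponential factor $e^{t\Real[f_0(Z)-f_0(\theta)]}$ concentrates on a window of width $O(t^{-1/3})$ around the maximum, so the Jacobian produces an extra $t^{-1/3}$ and the total over $k\neq 0$ is $O(t^{-1/3})$. Without something of this sort the truncation of $\mathcal D$ to $\mathcal D_\delta$ cannot be justified.

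Two secondary points. First, the ``Main obstacle'' paragraph misdiagnoses the factor $e^{-t^{1/3}\sigma(\theta)\log(q)\,x(Z-W)}$: on $\mathcal D_\alpha$ one has $\Real Z=\theta$ fixed, so the modulus of this factor is bounded uniformly in $\Imag Z$ (it is $e^{O(t^{1/3})}$ but not unbounded along $\mathcal D_\alpha$); the cutoff in $\Imag Z$ is therefore not needed, since $\int e^{-\pi|\Imag Z|}\,d\Imag Z<\infty$ already gives an absolutely integrable tail. Second, the claim in step (ii) that Hadamard's bound gives $|\det[K_x(W_i,W_j)]|\leqslant (C\sqrt n)^n$ ``uniformly in $t$'' is not literally supported by the estimates you quote, which leave a factor $e^{O(t^{1/3})}$; to obtain a $t$-uniform bound one must compare the linear $t^{1/3}$ term to the cubic $t|W-\theta|^3$ term coming from the steep descent of $\Real f_0$ on $\mathcal C_\alpha$ (so the product of the two is $e^{O(1)}$), which is what makes the dominated-convergence step work in the paper.
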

\begin{proof}
We have the Fredholm determinant expansion
\begin{equation}
\det(I+K_x)_{\mathbb{L}^2(\mathcal{C}_{\alpha})} =\sum_{k=0}^{\infty} \frac{1}{k!} \int_{-\pi}^{\pi}\mathrm{d}s_1 \dots \int_{-\pi}^{\pi} \mathrm{d}s_k \det\big( K_x(W(s_i), W(s_j))\big)_{i,j=1}^k\frac{\mathrm{d}W(s_i)}{\mathrm{d} s_i},
\label{eq:fredholmexpansion}
\end{equation}
with $W(s)$ as in (\ref{eq:defparametrization}).
Let us denote by $s_{\delta}$ the positive real number such that $\vert W(s_{\delta}) - \theta \vert =\delta$. We need to prove that one can replace all the integrations on $[-\pi, \pi]$ by integrations on $[-s_{\delta}, s_{\delta}]$ , making a negligible error. By Propositions \ref{prop:steepdescentC} and \ref{prop:steepdescentD}, we can find a constant $c_{\delta}>0$ such that for $\vert s\vert >s_{\delta}$ and for any $Z\in \mathcal{D}_{\alpha}$,
$$\Real\big[f_0(Z) - f_0(W(s))\big]<-c_{\delta}.$$
The integral in (\ref{eq:kernelexponentialform}) is absolutely integrable due to the exponential decay of the sine in the denominator. Thus, one can find a constant $C_{\delta}$ such that for $\vert s\vert >s_{\delta}$, any $W'\in \mathcal{C}_{\alpha}$  and $t$ large enough,
$$\big\vert K(W(s), W')\big\vert< C_{\delta} \exp(-t c_{\delta}/2).$$
By dominated convergence the error (that is the expansion (\ref{eq:fredholmexpansion}) with integration on $\big[-\pi , \pi\big]^k\setminus \big[-s_{\delta}, s_{\delta}\big]^k $) goes to zero for $t$ going to infinity.

We also have to prove that one can localize the $Z$ integrals as well. Recall that $\Real[f_0]$ is periodic on the contour $\mathcal{D}_{\alpha}$.  By the steep-descent property of Proposition \ref{prop:steepdescentD} and the same kind of dominated convergence arguments, one can localize the integrations on
$$\bigcup_{k\in \Z} I_k, \text{ where } I_k=\big[\theta - i\delta +i2k\pi/\log q, \theta + i\delta +i2k\pi/\log q\big],$$
making a negligible error. Since $f_0(Z) - f_0(\theta) \approx \frac{f_0'''(\theta)}{6} (Z-\theta)^3$, by making the change of variables $Z=\theta+ i2\pi k/\log q + zt^{-1/3}$, we see that only the integral for $Z\in [\theta-i\delta, \theta+i\delta]$ contributes to the limit. Indeed, for $k\neq 0$, and $Z\in I_k$
$$ \frac{\mathrm{d}Z}{\sin(\pi(Z-W))} \approx t^{-1/3} \exp\left(-\vert2\pi^2 k/\log(q) \vert \right).$$
Hence the sum of  contributions of integrals over $I_k$ for $k\neq 0$ is $\mathcal{O}(t^{-1/3})$ and one can finally integrate over $\mathcal{D}_{W,\delta} $ making an error going to $0$ as $t\to\infty$. It is not enough to show that the error made on the kernel goes to zero as $t$ goes to infinity, but one can justify that the error on the Fredholm determinant goes to zero as well by a dominated convergence argument on the expansion (\ref{eq:fredholmexpansion}).
\end{proof}
By the Cauchy theorem, one can replace the contours $\mathcal{D}_{\delta}$ and $\mathcal{C}_{\alpha, \delta}$ by wedge-shaped contours $\hat{D}_{\varphi, \delta}:=\lbrace \theta+\delta e^{i \varphi sgn(y)} \vert y\vert ;y\in[-1,1] \rbrace$ and $\hat{C}_{\psi, \delta}:=\lbrace \theta+\delta e^{i (\pi - \psi) sgn(y)} \vert y\vert ;y\in[-1,1] \rbrace$, where the angles $\varphi, \psi\in(\pi/6, \pi/2)$ are chosen so that the endpoints of the contours do not change.

Let us make the change of variables
$$ Z=\theta+\tilde{z}t^{-1/3}, \ W=\theta+\tilde{w}t^{-1/3}, \ W' = \theta+\tilde{w}'t^{-1/3}.$$
We define the corresponding rescaled contours
$$\mathcal{D}_{\varphi}^{L}:=\lbrace L e^{i \varphi sgn(y)} \vert y\vert ;y\in[-1,1] \rbrace,$$
$$\mathcal{C}_{\psi}^{ L}:=\lbrace L e^{i (\pi - \psi) sgn(y)} \vert y\vert ;y\in[-1,1] \rbrace.$$

\begin{proposition}
We have the convergence
$$ \lim_{t\to\infty} \det(I+K_x)_{\mathbb{L}^2(\mathcal{C}_{\alpha})}  = \det(I+K'_{x,\infty})_{\mathbb{L}^2(\mathcal{C}_{\psi}^{ \infty})},$$
where for $L\in \R_+ \cup \lbrace \infty\rbrace$,

$$ K'_{x,L} = \frac{1}{2i\pi} \int_{\mathcal{D}_{\varphi}^{L}} \frac{\mathrm{d}\tilde{z}}{(\tilde{z}-\tilde{w}')(\tilde{w}-\tilde{z})} \frac{\exp\big((-\tilde{z} \sigma(\theta) \log{q})^3/3 -x\tilde{z} \sigma(\theta) \log{q} \big)}{\exp\big( (-\tilde{w} \sigma(\theta) \log{q})^3/3 -x \tilde{w} \sigma(\theta) \log{q} \big)}.$$
\end{proposition}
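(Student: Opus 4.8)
The plan is to combine the localization result of Proposition \ref{prop:kernellocalization} with a Taylor expansion of $f_0$ near the critical point $\theta$ and a careful control of the prefactors under the diffusive-type rescaling $Z=\theta+\tilde z t^{-1/3}$, $W=\theta+\tilde w t^{-1/3}$, $W'=\theta+\tilde w' t^{-1/3}$. By Proposition \ref{prop:kernellocalization} it suffices to analyze $\det(I+K_{x,\delta})_{\mathbb{L}^2(\mathcal{C}_{\alpha,\delta})}$ for $\delta$ fixed and small, and by Cauchy's theorem we may replace $\mathcal{D}_\delta$ and $\mathcal{C}_{\alpha,\delta}$ by the wedge contours $\hat D_{\varphi,\delta}$ and $\hat C_{\psi,\delta}$ with angles $\varphi,\psi\in(\pi/6,\pi/2)$; this deformation does not cross singularities of the kernel if $\delta$ is small enough (the only nearby poles of the integrand are at $Z=W+k$, $k\geqslant 1$, which under condition \eqref{eq:restrictivecondition} stay to the right of $\mathcal D_\alpha$, and the contour $\mathcal{C}_\alpha$ stays away from $-1$ by \eqref{eq:twoalphaq}). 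The steep-descent Propositions \ref{prop:steepdescentC} and \ref{prop:steepdescentD} guarantee that on these wedge contours, away from $\theta$, $\Real[f_0(Z)-f_0(W)]$ is bounded above by a negative constant, so a further dominated-convergence argument on the Fredholm expansion lets us pass to the $t\to\infty$ limit term by term.

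Next I would carry out the asymptotics of the rescaled kernel. Since $f_0'(\theta)=f_0''(\theta)=0$, Taylor's theorem gives $f_0(Z)-f_0(\theta)=\tfrac{f_0'''(\theta)}{6}(Z-\theta)^3+O((Z-\theta)^4)$, and after the substitution $Z=\theta+\tilde z t^{-1/3}$ one has $t\big(f_0(Z)-f_0(W)\big)\to \tfrac{f_0'''(\theta)}{6}(\tilde z^3-\tilde w^3)$ uniformly on compacts, with the error $O(t^{-1/3})$ controlled on the tails by the steep-descent bounds. The linear term contributes $-t^{1/3}\sigma(\theta)\log(q)\,x\,(Z-W)\to -\sigma(\theta)\log(q)\,x(\tilde z-\tilde w)$. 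For the prefactors: $q^W\log q\,\mathrm dZ$ carries a Jacobian $q^\theta\log q\,t^{-1/3}\,\mathrm d\tilde z$; the factor $\tfrac{1}{q^Z-q^{W'}}$ behaves like $\tfrac{-1}{q^\theta\log q}\cdot\tfrac{t^{1/3}}{\tilde z-\tilde w'}$; the ratio $\tfrac{(q^{Z+1};q)_\infty}{(q^{W+1};q)_\infty}\to 1$ since $Z,W\to\theta$; and $\tfrac{\pi}{\sin(-\pi(Z-W))}\to\tfrac{\pi}{-\pi(Z-W)}=\tfrac{-1}{Z-W}\sim\tfrac{-t^{1/3}}{\tilde w-\tilde z}$ (with a sign chosen consistently with the orientation). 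Collecting the powers of $t^{1/3}$, the kernel rescaled by the $t^{-1/3}$ Jacobian of $W$ converges to
$$K'_{x,\infty}=\frac{1}{2i\pi}\int_{\mathcal D_\varphi^\infty}\frac{\mathrm d\tilde z}{(\tilde z-\tilde w')(\tilde w-\tilde z)}\frac{\exp\big((-\tilde z\sigma(\theta)\log q)^3/3-x\tilde z\sigma(\theta)\log q\big)}{\exp\big((-\tilde w\sigma(\theta)\log q)^3/3-x\tilde w\sigma(\theta)\log q\big)},$$
where I use $(\sigma(\theta)\log q)^3=-\tfrac{f_0'''(\theta)}{2}$ from \eqref{eq:thirdderivativegeneral}, so that $\tfrac{f_0'''(\theta)}{6}\tilde z^3$ becomes $\tfrac{(-\tilde z\sigma(\theta)\log q)^3}{3}$ (the sign $(-1)^3$ in $(-\tilde z)^3$ absorbs the sign difference). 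Lemma \ref{lem:thirdderivative} ensures $f_0'''(\theta)>0$, hence $\sigma(\theta)>0$ and the cubic term gives genuine Gaussian-type decay along the wedges with $\varphi,\psi\in(\pi/6,\pi/2)$.

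Finally I would upgrade pointwise kernel convergence to convergence of Fredholm determinants: on the bounded wedges the integrands are uniformly bounded (using the steep-descent estimates to dominate the tails of the $\mathcal D$-integral and the exponential decay of $\pi/\sin$), Hadamard's bound controls the size of the $k\times k$ determinants in the Fredholm expansion uniformly in $t$, so dominated convergence applies term by term. This yields $\lim_{t\to\infty}\det(I+K_x)_{\mathbb{L}^2(\mathcal C_\alpha)}=\det(I+K'_{x,\infty})_{\mathbb{L}^2(\mathcal C_\psi^\infty)}$, which is the claim. The main obstacle is the bookkeeping of all the scaling factors and signs in the prefactor — reconciling the Jacobians from $w=q^W$, the $\tfrac{1}{q^Z-q^{W'}}$ pole, the $\pi/\sin$ term, and the precise branch/orientation of the contours, so that the powers of $t^{1/3}$ cancel exactly and the limiting kernel comes out with the stated form; the analytic inputs (steep descent, $f_0'''(\theta)>0$, localization) are already in hand from the preceding propositions.
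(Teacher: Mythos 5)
Your proposal is correct and follows essentially the same path as the paper: localize via the preceding proposition, deform to wedge contours, Taylor-expand $f_0$ at the double critical point $\theta$ under the $t^{-1/3}$ rescaling, track the prefactor asymptotics (Jacobian, $\pi/\sin$, pole factor, $q$-Pochhammer ratio), and upgrade pointwise kernel convergence to Fredholm determinant convergence via Hadamard's bound and dominated convergence. The one small imprecision is attributing the control of the $O(t^{-1/3})$ Taylor error on the wedge contours to the steep-descent bounds; in the paper that error is instead bounded by $\delta(|\tilde z|^3+|\tilde w|^3)$ using the smallness of $\delta$, which is what preserves cubic exponential decay and allows dominated convergence.
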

\begin{proof}
By the change of variables and the discussion about contours above,
$$\det(I+K_{x,\delta})_{\mathbb{L}^2(\mathcal{C}_{\alpha, \delta})} = \det(I+K_{x,\delta}^t)_{\mathbb{L}^2(\mathcal{C}_{\psi}^{\delta t^{1/3}})} $$
where $K_{x,\delta}^t$ is the rescaled kernel
$$ K_{x, \delta}^t(\tilde{w},\tilde{w}' ) = t^{-1/3} K_{x, \delta }(\theta +\tilde{w}t^{-1/3}, \theta+\tilde{w}'t^{-1/3}),$$
where we use the contours $ \mathcal{D}_{\varphi}^{ \delta t^{1/3}}$ for the integration with respect to the variable $\tilde{z}$.

Let us estimate the error that we make by replacing $f_0$ by its Taylor approximation. We recall that with our definition of $\sigma(\theta)$ in (\ref{eq:expressionforsigma}),
$$ f_0'''(\theta) = -2 \left(\sigma(\theta)\log(q) \right)^3.$$
Using Taylor expansion, there exists $C_{f_0}$ such that
$$\vert f_0(Z) -f_0(\theta) +\left(\sigma(\theta)\log(q) (Z-\theta) \right)^3/3 \vert <  C_{f_0} \vert Z-\theta \vert ^4,$$
for $Z$ in a fixed neighbourhood of $\theta$ (say e.g. $\vert Z-\theta\vert <\theta$).
Hence for $Z=\theta+\tilde{z}t^{-1/3}, \ W=\theta+\tilde{w}t^{-1/3}$,
\begin{multline}
\Big\vert t\big(f_0(Z ) - f_0(W)\big) - \big( (-\sigma(\theta) \log(q) \tilde{z})^3/3 -  (-\sigma(\theta) \log(q) \tilde{w})^3/3 \big)\Big\vert < \\ t^{-1/3 }C_{f_0} \left(\vert \tilde{z} \vert^4 + \vert \tilde{w}\vert^4\right) \leqslant \delta \left( \vert \tilde{z} \vert^3 + \vert \tilde{w}\vert^3 \right).
\label{eq:estimatedifff0}
\end{multline}
To control the other factors in the integrand, let
$$ F(Z,W,W'):= \frac{t^{-1/3}q^W \log(q)}{q^Z-q^{W'}} \frac{\pi t^{-1/3}}{\sin(\pi(Z-W))} \frac{(q^{Z+1} ; q)_{\infty}}{(q^{W+1} ; q)_{\infty}}.$$
we have that
$$ F(Z,W,W') \xrightarrow[t\to\infty]{} F^{lim}(\tilde{z},\tilde{w},\tilde{w}') := \frac{1}{\tilde{z}-\tilde{w}'}\frac{1}{\tilde{z}-\tilde{w}}.$$
\begin{lemma}
For $\tilde{z}\in \mathcal{D}_{\varphi}^{\delta t^{1/3}}$, and $\tilde{w}, \tilde{w}' \in \mathcal{C}_{\psi}^{\delta t^{1/3}}$, with $Z=\theta+\tilde{z}t^{-1/3},  W=\theta+\tilde{w}t^{-1/3}$ and $W' = \theta+\tilde{w}'t^{-1/3}$, we have that
$$ \vert  F(Z,W,W') -F^{lim}(\tilde{z},\tilde{w},\tilde{w}')\vert < C t^{-1/3} P(\vert\tilde{z} \vert, \vert\tilde{w}\vert  ,  \vert \tilde{w}'\vert  )F^{lim}(\tilde{z},\tilde{w},\tilde{w}'), $$
where and  $P$ is a polynomial and $C$ is a constant independent of $t$ and $\delta$, as soon as $\delta$ belongs to some fixed  neighbourhood of $0$.
\end{lemma}
\begin{proof}
Since $\vert Z-\theta\vert<\delta , \vert W-\theta\vert <\delta$ and $\vert W'-\theta\vert<\delta$, there exist constants $C_1, C_2$ and $C_3$ such that
$$ \left|\frac{q^W \log(q) (Z-W')}{q^Z-q^{W'}} -1 \right| \leqslant C_1(\vert Z-\theta \vert + \vert W'-\theta \vert),$$
$$ \left|\frac{\pi (Z-W)}{\sin(\pi(Z-W))} -1 \right| \leqslant C_2 (\vert Z-\theta \vert + \vert W-\theta \vert) , $$
$$ \left|\frac{(q^{Z+1} ; q)_{\infty}}{(q^{W+1} ; q)_{\infty}} -1 \right| \leqslant C_3 (\vert Z-\theta \vert + \vert W-\theta \vert).$$
Hence there exists a constant $C$ and a polynomial $P$ of degree $3$ such that
 $$ \left| \frac{F(Z,W,W')}{F^{lim}(\tilde{z},\tilde{w},\tilde{w}')} -1 \right| \leqslant C t^{-1/3} P(\vert\tilde{z} \vert , \vert\tilde{w}\vert , \vert \tilde{w}'\vert  ), $$
 and the result follows.
\end{proof}
Now we estimate the difference between the kernels $K_{x,\delta}^t$ and $K'_{x,\delta t^{1/3}}$. Let
$$f(Z,W,W') = t\big(f_0(Z ) - f_0(W)\big) - t^{1/3 } \sigma(\theta)\log(q)  x(Z-W)$$
 and
$$f^{lim}(\tilde{z},\tilde{w},\tilde{w}')=  \left((-\tilde{z} \sigma(\theta) \log{q})^3/3 -x\tilde{z} \sigma(\theta) \log{q} \right) - \left( (-\tilde{w} \sigma(\theta) \log{q})^3/3 -x \tilde{w} \sigma(\theta) \log{q} \right). $$
The difference between the kernels is estimated by
\begin{multline}
\big\vert K_{x,\delta}^t(\tilde{w},\tilde{w}') - K'_{x,\delta t^{1/3}}(\tilde{w},\tilde{w}')\big\vert < \int_{\mathcal{D}_{\varphi}^{ \delta t^{1/3}}} \mathrm{d}\tilde{z} \exp(f^{lim}) \vert F \vert \, \cdot\, \vert \exp(f-f^{lim})-1\vert \\
+ \int_{\mathcal{D}_{\varphi}^{ \delta t^{1/3}}} \mathrm{d}\tilde{z} \exp(f^{lim}) \vert F-F^{lim } \vert,
\label{eq:estimatediffkernels}
\end{multline}
where we have omitted the arguments of the functions $f(Z,W,W')$, $f^{lim}(\tilde{z},\tilde{w},\tilde{w}')$, $F(Z,W,W')$ and $F^{lim}(\tilde{z},\tilde{w},\tilde{w}') $.

Using the inequality $ \vert \exp(x)-1\vert <\vert x\vert \exp(\vert x\vert) $ and (\ref{eq:estimatedifff0}), we have
$$ \big\vert \exp(f-f^{lim})-1\big\vert < t^{-1/3} C_{f_0} \left( \vert \tilde{z}\vert^4 + \vert\tilde{w} \vert^4\right) \exp\left( \delta \left( \vert \tilde{z} \vert^3 + \vert \tilde{w}\vert^3 \right)\right).$$
 Hence, for $\delta $ small enough, the first integral in the right-hand-side of (\ref{eq:estimatediffkernels}) have cubic exponential decay in $\vert \tilde{z}\vert$, and the limit when $t\to\infty$ is zero by dominated convergence.  The second integral goes to zero as well by the same argument. We have shown pointwise convergence of the kernels. In order to show that the Fredholm determinants also converge, we give a dominated convergence argument.
The estimate (\ref{eq:estimatedifff0}) also shows that for $\delta $ small enough, one can bound the kernel $K_{x, \delta}^t$ by
$$ \vert K_{x, \delta}^t (\tilde{w},\tilde{w}') \vert < C \exp\left(\Real[(\sigma(\theta)\log(q) \tilde{w}^3)]/6\right)$$
for some constant $C$.
Then, Hadamard's bound yields
$$ \det\left( K_{x, \delta}^t(\tilde{w}_i, \tilde{w}_j)\right)_{i,j=1}^n \leqslant n^{n/2}C^n  \prod_{i=1}^n \exp\left(\Real[\sigma(\theta)\log(q) \tilde{w}_i^3]/6\right).$$
It follows that the Fredholm determinant expansion
$$ \det(I+K_{x, \delta }^t)_{\mathbb{L}^2(\mathcal{C}_{\psi}^{ \delta t^{1/3}})} = \sum_{n=0}^{\infty} \frac{1}{n!} \int_{\mathcal{C}_{\psi}^{ \delta t^{1/3}}}\mathrm{d}\tilde{w}_1 \dots \int_{\mathcal{C}_{\psi}^{ \delta t^{1/3}}} \mathrm{d}\tilde{w}_n  \det\left( K_{x, \delta}^t(\tilde{w}_i, \tilde{w}_j)\right)_{i,j=1}^n,$$
is absolutely integrable and summable. Thus, by dominated convergence
\begin{align*}
 \lim_{t\to\infty} \det(I+K_x)_{\mathbb{L}^2(\mathcal{C}_{\alpha})} =& \lim_{t\to\infty} \det(I+K'_{x,\delta t^{1/3}})_{\mathbb{L}^2(\mathcal{C}_{\psi}^{ \delta t^{1/3}})}\\
 =&\det(I+K'_{x,\infty})_{\mathbb{L}^2(\mathcal{C}_{\psi}^{ \infty})}.
\end{align*}
\end{proof}
Finally, using a reformulation of the Airy kernel as in Section \ref{subsec:heuristicasymptotics}, and a new change of variables $\tilde{z}\leftarrow  - z \sigma(\theta) \log{q}$, and likewise for $\tilde{w}$ and $\tilde{w}'$, one gets
$$\det(I+K'_{x,\infty})  = \det(I-K_{\mathrm{Ai}})_{\mathbb{L}^2(-x, +\infty)},$$
which finishes the proof of Theorem \ref{thm:fluctuationsrarefactionfan}.

\subsection{Proof of Theorem \ref{thm:fluctuationsparticle1}}\label{sec:sixtwo}
The condition $R<1$ ensures that there exists a solution $\theta_0 >0$ to the equation
$$ \kappa_{q, q, R}(\theta) =0.$$
The condition $R>R_{min}(q)$ ensures that the solution $\theta_0$ is such that $q^{\theta_0} >\frac{2q}{1+q}$. Indeed, given the definition of $\kappa_{q, \nu, R}(\theta)$ in (\ref{eq:expressionforkappa}), $\theta_0$ satisfies
$$ \frac{\Psi_q''(\theta_0+1)}{q\Psi_q''(\theta_0)}  = \frac{1-R}{R}.$$
If we set $\theta_{max} = \log_q(2q/(1+q))$, then
$$  \frac{\Psi_q''(\theta_{max}+1)}{q\Psi_q''(\theta_{max})}  = \frac{1-R_{min}(q)}{R_{min}(q)}. $$
Since the function $\theta \mapsto \Psi_q''(\theta+1)/\Psi_q''(\theta)$ is increasing on $\R_+$, the condition $R>R_{min}(q)$ implies that $\theta_0<\theta_{max}$ and equivalently $q^{\theta_0} >\frac{2q}{1+q}$.

If we set $\zeta = -q^{-\pi(\theta_0)t -t^{1/3}\sigma(\theta_0) x}$, then
\begin{equation*}
\lim_{t\to\infty} \EE \left[\frac{1}{\left(\zeta q^{x_{1}(t)+1};q\right)_{\infty}}\right] = \lim_{t\to \infty} \mathbb{P}\left(\frac{x_1(t) - \pi(\theta_0)t}{\sigma(\theta_0) t^{1/3}} \leqslant x\right).
\end{equation*}
The $e_q$-Laplace transform $\EE \left[\frac{1}{\left(\zeta q^{x_{1}(t)+1};q\right)_{\infty}}\right] $ is the Fredholm determinant of a kernel written in terms of $f_0$ exactly as in  (\ref{eq:kernelexponentialform}) with the only modification that the integrand should be multiplied by
$$ \left(\frac{(\nu q^W ; q)_{\infty}}{( q^W ; q)_{\infty}} \right)\Big/ \left( \frac{(\nu q^Z ; q)_{\infty}}{( q^Z ; q)_{\infty}} \right).$$
This additional factor does not perturb the rest of the asymptotic analysis, and disappears in the limit when we rescale the variables around $\theta$. Since the condition $q^{\theta_0}>2q/(1+q)$ is satisfied, Theorem \ref{thm:fluctuationsparticle1} follows from the proof of Theorem \ref{thm:fluctuationsrarefactionfan}.

\subsection{Proofs of Lemmas about properties of $f_0$}
\label{subsec:technicalproofs}

\begin{proof}[Proof of Lemma \ref{lem:thirdderivativegeneral}]
With $R+L=1$, the expression for $f_0'''(\theta)$ in Equation (\ref{eq:thirdderivativegeneral}) is linear in $R$. Hence we may prove the positivity only for the extremal values, i.e. $R=1$ and $R=0$.

We first prove that the function
$$\theta \in \R_{>0} \mapsto \frac{\Psi_q'''(\theta)}{\Psi_q''(\theta)}$$
is strictly increasing. We show that the derivative is positive, that is for any $\theta>0$,
$$\Psi_q''''(\theta)\Psi_q''(\theta) > \left(\Psi_q'''(\theta) \right)^2.$$
Using the series representation for the derivatives of the $q$-digamma function (\ref{eq:digammaderivatives}), this is equivalent to
\begin{equation}
\sum_{n,m\geqslant 1} \frac{n^4 \alpha^n}{1-q^n} \frac{m^2 \alpha^m}{1-q^m} > \sum_{n,m\geqslant 1}\frac{n^3 \alpha^n}{1-q^n} \frac{m^3 \alpha^m}{1-q^m},
\label{eq:ineqdoublesums}
\end{equation}
for $\alpha\in(0,1)$.
Each side of (\ref{eq:ineqdoublesums}) is a power series in $\alpha$, and we claim that the inequality holds for each coefficient. Indeed, keeping only the coefficient of $\alpha^k$, we have to prove that
\begin{equation}
\sum_{n=1}^{k-1} \frac{n^4 (k-n)^2}{(1-q^n)(1-q^{k-n})} \geqslant \sum_{n=1}^{k-1} \frac{n^3 (k-n)^3}{(1-q^n)(1-q^{k-n})},
\label{eq:coefficient}
\end{equation}
with strict inequality for at least one coefficient.
Symmetrizing the left-hand-side, the inequality is equivalent to
$$ \sum_{n=1}^{k-1} \frac{n^2 (k-n)^2}{(1-q^n)(1-q^{k-n})} \frac{n^2 + (k-n)^2}{2} \geqslant \sum_{n=1}^{k-1} \frac{n^2 (k-n)^2}{(1-q^n)(1-q^{k-n})} n(k-n),$$
which clearly holds, with strict inequality  for $k\geqslant 3$.

\textbf{Case $R=1$.} In that case, we have to prove that
$$ \Psi_q'''(\theta+V) - \Psi_q''(\theta+V) \frac{\Psi_q''(\theta)- \Psi_q''(\theta+V)}{\Psi_q'(\theta)- \Psi_q'(\theta+V)}<0.$$
Using Cauchy mean value theorem, the ratio can be rewritten as
$$\frac{\Psi_q''(\theta)- \Psi_q''(\theta+V)}{\Psi_q'(\theta)- \Psi_q'(\theta+V)}  = \frac{ \Psi_q'''(\tilde{\theta})}{\Psi_q''(\tilde{\theta})}, $$
for some $\tilde{\theta}\in (\theta, \theta+V)$.
Since $\Psi_q''(x)<0$ for $x \in (0, +\infty)$, the inequality reduces to
$$ \frac{\Psi_q'''(\theta+V)}{\Psi_q''(\theta+V)} >\frac{\Psi_q'''(\tilde{\theta})}{\Psi_q''(\tilde{\theta})},$$
which is true by the first part of the proof.

\textbf{Case $R=0$.} In that case, we have to prove that
$$ \Psi_q'''(\theta) - \Psi_q''(\theta) \frac{\Psi_q''(\theta)- \Psi_q''(\theta+V)}{\Psi_q'(\theta)- \Psi_q'(\theta+V)}>0.$$
Using the same argument, one is left with proving
$$ \frac{\Psi_q'''(\theta)}{\Psi_q''(\theta)} <\frac{\Psi_q'''(\tilde{\theta})}{\Psi_q''(\tilde{\theta})},$$
which is already done as well.

The proof also applies to the $\nu=0$ case, since the $\nu$ in the denominator in Equation (\ref{eq:thirdderivativegeneral}) can be cancelled by a factor $\nu$ coming out from the $q$-digamma function.
\end{proof}

\begin{proof}[Proof of Proposition \ref{prop:steepdescentC}]
It suffices to prove that for $u\in (0, \pi)$,
$$\frac{\rm{d}}{\rm{d}u} \Real\big[f_0(W(u))\big]>0.$$
We have
$$ \frac{\rm{d}}{\rm{d}u} \Real[f_0(W(u))] = \Real\left[ \frac{\rm{d} W(u)}{\rm{d}u} f_0'(W(u))\right] =  \Imag\left[\frac{1}{\log q} \frac{(1-\alpha)e^{iu}}{1-(1-\alpha) e^{iu}} f_0'(W(u))\right]. $$
We use the linear dependence of $f_0$ on $R$ as in the proof of Lemma \ref{lem:thirdderivativegeneral}.

\textbf{Case $R=1$.}  Using (\ref{eq:f0primegeneral}), one needs to prove that
$$ \Imag\left[ \frac{\Psi_q'(W(u)+1)}{(\log q)^2}  \frac{1-q^{W(u)}}{q^{W(u)}} - \frac{\Psi_q'(\theta +1)}{(\log q)^2}  \frac{1-q^{W(u)}}{q^{W(u)}}+ \frac{\Psi_q''(\theta+1)}{(\log q)^3} (1-\alpha)\frac{1-q^{W(u)}}{q^{W(u)}}  \right] >0. $$
Using the series representation of the $q$-digamma function (\ref{eq:seriespsiq}), the last inequality can be written as
$$ \Imag\left[
\sum_{k=1}^{\infty} \frac{(1-\alpha)e^{iu}}{1-(1-\alpha)e^{iu}} \left(
\frac{(1-(1-\alpha) e^{iu})q^k}{(1-(1-(1-\alpha) e^{iu})q^k)^2} - \frac{\alpha q^k}{(1-\alpha q^k)^2 }+ \frac{\alpha q^k (1+\alpha q^k)(1-\alpha)}{(1-\alpha q^k)^3}
\right)
 \right] >0$$
 A computation -- painful by hand, but easy for Mathematica -- shows that the left-hand-side can be rewritten as
\begin{equation}
\sum_{k=1}^{\infty}  \frac{4 \sin(u) \sin^2(u/2) (1-\alpha)^2 \alpha q^{k}(1-(2-\alpha)q^k) h(\alpha,q^k,u)}{\vert 1-(1-\alpha)e^{iu} \vert^2 \vert 1-(1-(1-\alpha) e^{iu})q^k\vert^4 (1-\alpha q^k)^3 },
\label{eq:steepdescentsamedenom}
\end{equation}
where
\begin{equation*}
 h(\alpha,q,u) = 1-\alpha q \Big(4- \alpha \big( 2+ 2q (1-\alpha) + q^2(2 - q) (1+(1-\alpha)^2) \big) \Big)
  + 2(1-\alpha)\alpha^2 q^2 (1-q)^2 \cos(u).
\end{equation*}
For any $u\in(0,\pi)$, $\cos(u)\geqslant -1$, hence
 $$
 h(\alpha,q,u) \geqslant 1-\alpha q (2-\alpha) \left( 2 - \alpha q^2 (2-\alpha)(2-q)\right)
  $$
and for any $\alpha\in(0,1), q\in (0,1)$, $1-\alpha q (2-\alpha) \left( 2 - \alpha q^2 (2-\alpha)(2-q)\right)\geqslant 0$.
Thus, if $(2-\alpha)q<1$, each term in (\ref{eq:steepdescentsamedenom}) is positive.

\textbf{Case $R=qL$.} Since $R+L=1$, this case corresponds to $R=q/(1+q)$ and $L=1/(1+q)$. As we have noticed in Remark \ref{rem:ratiosimpler}, we have the simpler expression (\ref{eq:simplecasef0prime}) for $f_0'$ when $R=qL$. Hence it is enough to show that
\begin{equation*}
 \Imag\left[\frac{1-q}{(1+q)(1-\alpha)^2} \left(1-\alpha^2 -  \frac{(1-\alpha)^2}{1-q^{W(u)}} - \alpha^2  \frac{1-q^{W(u)}}{q^{W(u)}}\right)\right] >0
\end{equation*}
or equivalently, that
\begin{equation*}
 \frac{1-q}{(1+q)(1-\alpha)^2}  (1-\alpha)\sin(u) \left(1 - \frac{\alpha^2}{\vert q^{W(u)}\vert^2} \right)>0
\end{equation*}
which is true since $\vert q^{W(u)}\vert \leqslant \alpha$ by assumption.

To conclude, since $f_0$ is linear in $R$, the result is also proved for any value $R\in[q/(1+q), 1]$.
\end{proof}

\begin{proof}[Proof of Proposition \ref{prop:steepdescentD}]
It suffices to show that for $u\in(0,\pi)$,
$$ 0>\frac{\rm{d}}{\rm{d}u} \Real[f_0(Z(u))] = \frac{-1}{\log q} \Imag[f_0'(Z(u))], $$
where
$$Z(u)=\theta+i u /\log(q),\ \ (u\in \R). $$
We use the linear dependence of $f_0$ on $R$ as in the proof of Lemma \ref{lem:thirdderivativegeneral} and Proposition \ref{prop:steepdescentC}.

\textbf{Case $R=1$.} Using (\ref{eq:f0primegeneral}), one has to show that
$$ \Imag\left[ \frac{\Psi_q'(Z(u)+1)}{(\log q)^2} - \frac{\Psi_q''(\theta+1)}{(\log q)^3} \frac{(1-\alpha)^2}{\alpha}\frac{q^{Z(u)}}{1-q^{Z(u)}} \right] >0. $$
Using the series representation of the $q$-digamma function (\ref{eq:seriespsiq}), the last inequality can be written
$$ \Imag\left[ \sum_{k=1}^{\infty} \frac{\alpha e^{iu}q^k}{(1-\alpha e^{iu}q^k)^2} - \frac{\alpha q^k (1+\alpha q^k)}{(1-\alpha q^k)^3}\frac{(1-\alpha)^2 e^{iu}}{1-\alpha e^{iu}} \right] >0. $$
The left-hand-side equals
\begin{equation}
\sum_{k=1}^{\infty} \dfrac{\sin(u) \alpha (1-\alpha q^k)(2-\alpha-\alpha^2 q^k)(1+(\alpha-2)q^k)}{\left|1-\alpha e^{iu}q^k\right|^4(1-\alpha q^k)^3 \vert  1-\alpha e^{iu}\vert^2}.
\label{eq:sumarranged}
\end{equation}
If $(2-\alpha)q < 1$, then for all $k\geqslant 1$, $1+(\alpha-2)q^k\geqslant 0$, and each term in (\ref{eq:sumarranged}) is positive.

\textbf{Case $R=qL$.} Using (\ref{eq:simplecasef0prime}), it is enough to show that
$$ \Imag\left[ \frac{q^{Z(u)}}{1-q^{Z(u)}}\left( 1-\alpha^2 -\frac{(1-\alpha)^2}{1-q^{Z(u)}}\right)-\alpha\right]  >0, $$
which is true since the left-hand-side equals
$$ \frac{2\sin(u) \alpha^2(1-\alpha^2)(1-\cos(u))}{\vert 1-\alpha e^{iu}\vert^2}. $$

To conclude, since $f_0$ is linear in $R$, the result is also proved for any value $R\in[q/(1+q), 1]$.
\end{proof}

\bibliographystyle{amsalpha}
\bibliography{twosidedqhahn.bib}
\end{document}